\newlength\Colsep
\newcommand{\inr}[1]{\bigl< #1 \bigr>}
\newcommand{\cF}{\mathcal{F}}
\newcommand{\cK}{\mathcal{K}}
\newcommand{\cO}{\mathcal{O}}
\newcommand{\cX}{\mathcal{X}}
\newcommand{\cB}{\mathcal{B}}
\newcommand{\cY}{\mathcal{Y}}
\newcommand{\cI}{\mathcal{I}}
\newcommand{\bR}{\mathbb{R}}
\newcommand{\R}{\mathbb{R}}
\newcommand{\E}{\mathbb{E}}
\newcommand{\cL}{\mathcal{L}}
\newcommand{\norm}[1]{\left\|#1\right\|}%
\newcommand{\MOM}[2]{\text{MOM}_{#1}\left[#2\right]}
\newcommand{\cro}[1]{\left[#1\right]}
\newcommand{\set}[1]{\left\{#1\right\}}
\newcommand{\cA}{\mathcal{A}}
\DeclareMathOperator*{\argmin}{argmin}
\newtheorem{Theorem}{Theorem}
\newtheorem{Assumption}{Assumption}
\newtheorem{Definition}{Definition}
\newtheorem{Lemma}{Lemma}
\newtheorem{Proposition}{Proposition}
\newtheorem{Remark}{Remark}
\begin{document}
	
	\title{Robust high dimensional learning for Lipschitz and convex losses. 
}
	
	\author{\name Chinot Geoffrey \email geoffrey.chinot@stat.math.ethz.ch \\
		\addr Department of Statistics\\
		ETH Zurich\\
		Rämistrasse 101, 8092 Zurich, Switzerland
		\AND
		\name Lecu{\'e} Guillaume \email guillaume.lecue@ensae.fr \\
		\addr Department of Statistics\\
		ENSAE CREST\\
		5 avenue Henry Le Chatelier 91120 Palaiseau, France
		\AND
		\name Lerasle Matthieu \email matthieu.lerasle@ensae.fr \\
		\addr Department of Statistics\\
		ENSAE CREST\\
		5 avenue Henry Le Chatelier 91120 Palaiseau, France
	}
	
	\editor{Nicolas Vayatis}
	
	\maketitle
	
	\begin{abstract}
	We establish risk bounds for Regularized Empirical Risk Minimizers (RERM) when the loss is Lipschitz and convex and the regularization function is a norm. In a first part, we obtain these results in the i.i.d. setup under subgaussian assumptions on the design. In a second part, a more general framework where the design might have heavier tails and data may be corrupted by outliers both in the design and the response variables is considered. In this situation, RERM performs poorly in general. We analyse an alternative procedure based on median-of-means principles and called ``minmax MOM''. We show optimal subgaussian deviation rates for these estimators in the relaxed setting. The main results are meta-theorems allowing a wide-range of applications to various problems in learning theory. To show a non-exhaustive sample of these potential applications, it is applied to classification problems with logistic loss functions regularized by LASSO and SLOPE, to regression problems with Huber loss regularized by Group LASSO and Total Variation. Another advantage of the minmax MOM formulation is that it suggests a systematic way to slightly modify descent based algorithms used in high-dimensional statistics to make them robust to outliers \cite{lecue2017robust}. We illustrate this principle in a Simulations section where a `` minmax MOM'' version of classical proximal descent algorithms are turned into  robust to outliers algorithms.  
	\end{abstract}
	
	\begin{keywords}
		Robust Learning, Lipschtiz and convex loss functions, sparsity bounds, Rademacher complexity bounds, LASSO, SLOPE, Group LASSO, Total Variation.
	\end{keywords}

\section{Introduction}
Regularized empirical risk minimizers (RERM) are standard estimators in high dimensional classification and regression problems. They are solutions of  minimization problems of a regularized empirical risk functions for a given loss and regularization functions. 
In regression, the quadratic loss of linear functionals regularized by the $\ell_1$-norm (LASSO) \cite{tibshirani1996regression} is probably the most famous example of RERM, see for example \cite{MR2829871, MR2807761,MR3307991} for overviews.
Recent results and references, including more general regularization functions can be found, for example in \cite{LM_sparsity, bellec2017towards,MR3025128,MR3138795,MR3236867}. 
RERM based on the quadratic loss function are highly unstable when data have heavy-tails or when the dataset has been corrupted by outliers.
These problems have attracted a lot of attention in robust statistics, see for example \cite{huber2011robust} for an overview. 
By considering alternative losses, one can efficiently solve these problems when heavy-tails or corruption happen in the output variable $Y$. 
There is a growing literature analyzing performance of some of these alternatives in learning theory.
In regression problems, among others, one can mention the $L_1$ absolute loss \cite{shalev2011stochastic}, the Huber loss~\cite{Fan, MR3852659} and the quantile loss \cite{pierre2017estimation} that is popular in finance and econometrics. 
In classification, besides the $0/1$ loss function which is known to lead to computationally intractable RERM, the logistic loss and the hinge loss are among the most popular convex surrogates \cite{MR2051001,MR2268032}. 
Quantile, $L_1$, Huber loss functions for regression and Logistic, Hinge loss functions for classification are all Lipschitz and convex loss functions (in their first variable, see Assumption~\ref{assum:lip} for a formal definition). 
This remark motivated \cite{pierre2017estimation} to study systematically RERM based on Lipschitz loss functions.
A remarkable feature of Lipschitz losses proved in \cite{pierre2017estimation} is that optimal results can be proved with almost no assumption on the response variable $Y$.


This paper is built on the approach initiated in \cite{ChiLecLer:2018}.
Compared with \cite{pierre2017estimation}, the approach of \cite{ChiLecLer:2018} improves the results by  deriving risk bounds depending on a localized complexity parameters rather than global ones and by considering a more flexible setting where a global Bernstein condition is relaxed into a local one, see Assumption~\ref{assum:fast_rates} and the following discussion for details.
The paper \cite{ChiLecLer:2018} only considers estimators that are not regularized and that can therefore only be efficient in small dimensional settings. 

The first main result of this paper is a high dimensional extension of the results in \cite{ChiLecLer:2018} that is achieved by analyzing estimators (based on the empirical risk or a Median-of-Means version) regularized by a norm. The main results are two meta-theorem allowing to study a broad range of estimators including LASSO, SLOPE, group LASSO, total variation and their minmax MOM version. Section~\ref{sec:app} provides applications of the main results to some examples among these.

While RERM is studied without assumption on the output variables, somehow strong, albeit classical, hypotheses are granted on the design $X$ in our first main result.
We assume actually in this analysis subgaussian assumptions on the input variables as in~\cite{pierre2017estimation}. 
The necessity of this assumption to derive optimal exponential deviation bounds for RERM is not surprising as RERM have downgraded performance when the design is heavy tailed (see \cite{mendelson2014learning} or \cite{ChiLecLer:2018} for instance). 

In a second part, we study an alternative to RERM in a framework with less stringent assumptions on the data. These estimators are based on the Median-Of-Means (MOM) principle \cite{MR702836, MR762855, MR855970, MR1688610} and the minmax approach \cite{MR2906886,MR3595933}. They are called minmax MOM estimators as in \cite{lecue2017robust}. A non-regularized version of these estimators was analyzed in \cite{ChiLecLer:2018}. The second main and most important result of the paper shows that minmax MOM estimators achieve optimal subgaussian deviation bounds in the relaxed setting where RERM perform poorly because of outliers and heavy-tailed data.  This result is obtained under a local Bernstein condition as for the RERM.
It allows to derive fast rates of convergence in a large set of applications where typically, subgaussian assumptions on the design $X$ are replaced by moment assumptions.
%
%
Minmax MOM estimators are then analysed without the local Bernstein condition.
Oracle inequalities holding with exponentially large probability are proved in this case.  
	Compared with results under Bernstein's assumption, an extra variance term appears in the convergence rate. 
	This extra term typically would yield to slow rates of convergence in the applications, which are known to be minimax in the case where no Bernstein assumption holds.
	However, the variance term disappears under the Bernstein's condition, which shows that fast rates can be recovered from the general results.
	In addition, all results on minmax MOM estimators, both with or without Bernstein condition, are shown in the ``$\cO\cup\cI$'' framework  -- where $\cO$ stands for ``outliers'' and $\cI$ for ``informative''-- see Section~\ref{sec:OUI} or~\cite{lecue2017learning,lecue2017robust} for details.
	In this framework, all assumptions (such as the Bernstein's condition) are granted on ``inliers" $(X_i,Y_i)_{i\in \cI}$.
	These inliers may have different distributions but the oracles of these distributions should match.
	On the other hand, no assumption are granted on outliers $(X_i,Y_i)_{i\in \cO}$, which is to the best of our knowledge the strongest form of aggressive/adversarial outliers (it includes, in particular, Huber's $\epsilon$-contamination setup).
	The minmax MOM estimators perform well in this setting, it means that the accuracy of their predictions is not downgraded by the presence of outliers in the dataset.
	Mathematically, this robustness is not surprising as it is a byproduct of the median step used in the MOM principle.
	However, in practice, it is an important advantage of MOM estimators compared to RERM. 
	
	The main results on minmax MOM estimators are also meta-theorems that can be applied to the same examples as RERM.
	Each of these examples provide a new (to the best of our knowledge) estimator that reach performance that RERM could not typically achieve.
	For example, when the class of classifiers/regressors is the class of linear functions on $\R^p$, minmax MOM estimators have a risk bounded by the minimax rate with optimal exponential probability of deviation even if the inputs $X$ only satisfy weak moment assumptions and/or have been corrupted by outliers.
	These applications are also discussed in Section~\ref{sec:app}.

	Finally, in Section~\ref{sec:Simu}, we consider the modification of standard algorithms suggested by the minmax MOM formulation introduced in \cite{lecue2017robust} to construct robust algorithms.  


 
%

The  paper is organized as follows. Section~\ref{sec:setting} presents the formal setting. Section~\ref{sec_ERM} presents results for RERM and Section~\ref{sec:minmaxMOM} those for minmax MOM estimators under a local Bernstein condition and in Section~\ref{sec_wtb} without this condition. Section~\ref{sec:app} details several examples of applications of the main results. A short simulation study illustrating our theoretical findings is presented in Section~\ref{sec:Simu}. The proofs are postponed to Sections~\ref{sec:proofs}-~\ref{sec:MOM_wtb}.

\section{Mathematical background and notations}\label{sec:setting}
Let $(\mathcal{Z},\mathcal{A},P)$ denote a probability space, where $\mathcal{Z}=\mathcal{X}\times\mathcal{Y}$ is a product space such that $\mathcal{X}$ denotes a measurable space of inputs and $\mathcal{Y}\subset\R$ is the set of values taken by the outputs. 
Let $Z=(X,Y)$ denote a random variable taking values in $\mathcal{Z}$ with distribution $P$ and let $\mu$ denote the marginal distribution of the design $X$.

Let $\overline{\mathcal{Y}}\subset \R$ denote a convex set such that $\cY\subset\overline{\mathcal{Y}}$ and let $F$ denote a class of functions $f:\mathcal{X}\to\overline{\mathcal{Y}}$.
The set $\overline{\mathcal{Y}}$ is typically the co,vex hull of $\cY$.  As such, it will always contain $\cY$.
Let $\ell:\overline{\mathcal{Y}}\times \mathcal{Y}\to\R$ denote a loss function such that $\ell(f(x),y)$ measures the error made when predicting $y$ by $f(x)$. 
For any distribution $Q$ on $\mathcal{Z}$ and any function $g:\mathcal{Z}\to \R$ for which it makes sense, let $Qg=\E_{Z\sim Q}[g(Z)]$ denote the expectation of the function $g$ under the distribution $Q$ and, for any $p\geqslant 1$, let $\|g\|_{L_p(Q)}:=(Q[|g|^p])^{1/p}$ and $\|g\|_{L_p}:=\|g\|_{L_p(P)}$. 
The risk of any $f\in F$ is given by $P\ell_f$, where $\ell_f(x,y):=\ell(f(x),y)$. 
The prediction of $Y$ with minimal risk is given by $f^*(X)$, where $f^*$, called \emph{oracle}, is defined as any function such that
\[
f^*\in \argmin_{f\in F}P\ell_f\enspace.
\]
Hereafter, for simplicity, it is assumed that $f^*$ exists and is uniquely defined.
The oracle is unknown to the statistician that has only access to a dataset $(X_i,Y_i)_{i\in\{1,\ldots,N\}}$ of random variables taking values in $\cX\times\cY$.
The goal is to build a data-driven estimator $\hat{f}$ of $f^*$ that predicts almost as well as $f^*$.
The quality of an estimator $\hat f$ is measured by the error rate $\|\hat{f}-f^{*}\|_{L_2}^2$ and the excess risk $P \cL_{\hat f}$, where, respectively,
\begin{gather} 
\label{error_rate}\|\hat{f}-f^{*}\|_{L_2}^2 = P[(\hat{f}-f)^2]=\mathbb{E} \bigg[  \bigg(\hat{f}(X)-f^{*}(X)\bigg)^2 | (X_i,Y_i)_{i=1}^N   \bigg] \mbox{ and }
\cL_{\hat f} := \ell_{\hat f} - \ell_{f^{*}}\enspace.
\end{gather}

Let $P_N$ denote the empirical measure i.e $P_N(A) = (1/N) \sum_{i=1}^N I(Z_i\in A)$ for all $A\in\cA$. A natural candidate for the estimation of $f^*$ is the Empirical Risk Minimizer (ERM) of \cite{MR0288823}, see also \cite{vapnik1998statistical} for an overview, which is defined by
\begin{equation}
\hat{f}^{ERM} \in  \argmin_{f \in F}  P_N \ell_f  \enspace.
\end{equation}

The choice of $F$ is a central issue: enlarging the space $F$ deteriorates the quality of the oracle estimation but improves its predictive performance.
It is possible to use large classes $F$ without significantly altering the quality estimation if certain structural properties of the oracle $f^*$ are known a priori from the statistician. 
In that case, a widely spread approach is to add to the empirical loss a regularization term promoting this structural property. 
In this paper, we consider this problem when the regularization term is a norm.
Formally, let $E$ be a linear space such that $F\subset E\subset L_2(\mu)$ and let $\|\cdot\|: E \mapsto \bR^+$ denote a norm on $E$. For any $\lambda \geq 0$, the regularized ERM (RERM) is defined by
\begin{equation} \label{eq2}
\hat{f}^{RERM}_{\lambda} \in \argmin_{f \in F}P_N \ell^\lambda_f, \quad\text{where}\quad \ell^\lambda_f(x,y)=\ell_f(x,y) + \lambda \|f\| \enspace.
\end{equation}
In regression, one can mention Thikonov regularization which promotes smoothness \cite{golub1999tikhonov} and $\ell_1$ regularization which promotes sparsity \cite{tibshirani1996regression}. 
Likewise, for matrix reconstruction, the 1-Schatten norm $S_1$ promotes low rank solutions (see \cite{MR2906869,cai2016estimating}). 


In the remaining of the paper, the following notations will be used repeatedly: for any $r>0$, let
\[
rB_{L_2} = \{ f \in L_2(\mu): \|f\|_{L_2} \leqslant r \},\quad rS_{L_2} = \{ f \in L_2(\mu): \|f\|_{L_2} = r \}\enspace. 
\]
Let $rB = \{ f \in E: \|f\| \leq r \}$ and  $rS = \{ f \in E: \|f\| = r \}$. 
For any set $H$ for which it makes sense, let $H + f^{*} = \{ h+f^{*}: h \in H   \}$, $H - f^{*} = \{ h-f^{*}: h \in H   \}$. Let $(e_i)_{i=1}^p$ be the canonical basis of $\mathbb R^p$. Let $c$ denote an absolute constant whose value might change from line to line and let $c(A)$ denote a function depending on the parameters $A$ whose value may also change from line to line.

\section{Regularized ERM with Lipschitz and convex loss functions} \label{sec_ERM}
This section presents and improves results from \cite{pierre2017estimation}. 
A local Bernstein assumption, holding in a neighborhood of the \emph{oracle} $f^{*}$ is introduced in the spirit of \cite{ChiLecLer:2018}. 
This assumption does not imply boundedness of $F$ in $L^2$-norm unlike the global Bernstein condition considered in \cite{pierre2017estimation}.
New rates of convergence are obtained, depending on \textbf{localized} complexity parameters improving the global ones from \cite{pierre2017estimation}. 

\subsection{Main assumptions} 
We start with a set of assumptions sufficient to prove exponential deviation bounds for the error rate and excess risk of RERM for general convex and Lipschitz loss functions and for any regularization norm. In this section, we consider the classical i.i.d. assumption (we will relax this assumption in the next sections in order to consider corrupted databases).
\begin{Assumption} \label{assum_erm1}
	$(X_i,Y_i)_{i=1}^N$ are independent and identically distributed with distribution $P$.
\end{Assumption}
All along the paper, we consider Lipschitz and convex loss functions.
\begin{Assumption}\label{assum:lip}
	There exists $L>0$ such that, for any $y \in \cY$, $\ell(\cdot,y)$ is \textbf{$L$-Lipschitz} i.e for every $f$ and $g$ in $F$, $x \in \cX$ and $y \in \cY$, $|\ell(f(x),y) - \ell(g(x),y) | \leq L |f(x)-g(x)|$ and \textbf{convex} i.e for all $\alpha \in [0,1]$, $\ell(\alpha f(x) + (1-\alpha) g(x),y) \leq \alpha \ell(f(x),y) + (1-\alpha )\ell(g(x),y)$. 
\end{Assumption}
There are many examples of loss functions satisfying Assumption~\ref{assum:lip}. The two examples studied in this work (see Section~\ref{sec:app}) are
\begin{itemize}
	\item the \textbf{logistic loss function} defined for any $u\in\R$ and $y\in\cY=\{-1, 1\}$, by 
	$\ell(u,y) = \log(1+\exp(-yu))$. It satisfies Assumption~\ref{assum:lip} for $L=1$.
	\item  Tte \textbf{Huber loss function} with parameter $\delta >0$ is defined for all $u,y\in\R$, by
	\[
	\ell(u,y) =  
	\begin{cases}
	\frac{1}{2}(y-u)^2&\text{ if }|u-y| \leq \delta\\
	\delta|y-u|-\frac{\delta^2}{2}&\text{ if }|u-y| > \delta
	\end{cases}\enspace.
	\]
	It satisfies  Assumption~\ref{assum:lip} for $L=\delta$.
\end{itemize}
We will also assume that the functions class $F$ is convex.
\begin{Assumption}\label{assum:convex}
	The class $F$ is convex.
\end{Assumption}
In particular, Assumption~\ref{assum:convex} holds in the important case considered in high-dimensional statistics when $F$ is the class of all linear functions indexed by $\R^p$, $F = \{ \inr{t,\cdot}: t \in \mathbb{R}^p  \}$. This example is studied in great details in Section~\ref{sec:app}.

\medskip

RERM performs well when the empirical excess risk $f\in F \to P_N\cL_f$ is uniformly concentrated around the excess risk $f\in F\to P\cL_f$. 
This requires strong concentration properties of the class of random variables $\{\cL_f(X):f\in F\}$, which is implied by concentration properties of $\{(f-f^*)(X):f\in F\}$ thanks to the Lipschitz assumption on the loss function. 
Here, we study RERM under a subgaussian assumption on the design. 
We first recall the definition of a subgaussian class of functions.
\begin{Definition}
	A class $F$ is called $L_0$-subgaussian (with respect to $X$), where $L_0 \geq 1$, when for all $f$ in $F$ and for all $\lambda >1$, $\mathbb E \exp( \lambda |f(X)|/ \|f\|_{L_2} ) \leq \exp(\lambda^2 L_0^2/2)$.
\end{Definition}
\begin{Assumption} 
	\label{ass:sub-gauss} The class $F-f^{*}$ is $L_0$-subgaussian with respect to $X$.
\end{Assumption}

Assumptions \ref{assum_erm1}-\ref{ass:sub-gauss} are also granted in~\cite{pierre2017estimation}.  
In this setup, a natural way to measure the statistical complexity of the problem is via Gaussian mean widths (of some subsets of $F$). We recall the definition of this measure of complexity.
\begin{Definition}\label{def:gauss_mean_width}
	Let $H\subset L_2(\mu)$ and $(G_h)_{h\in H}$ be the canonical centered Gaussian process indexed by $H$, with covariance structure given by $\left(\E (G_{h_1}- G_{h_2})^2\right)^{1/2} = \left(\E(h_1(X)-h_2(X))^2\right)^{1/2}$ for all $h_1,h_2\in H$. The \textbf{Gaussian mean-width} of $H$ is $w(H) = \E \sup_{h\in H} G_h$.
\end{Definition}
Gaussian mean widths of various sets have been computed in \cite{MR3311453},  \cite{bellec2017localized}, or~\cite{gordon2007gaussian} for example. Risk bounds for $\hat{f}^{RERM}_{\lambda}$ are driven by fixed point solutions of a Gaussian mean width of regularization balls $(F-f^*)\cap \rho B$, which measure the local complexity of $F$ around $f^*$. 

\begin{Definition}\label{def:function_r}
	For all $A>0$, the \textbf{complexity function} is a non-decreasing function $r(A,\cdot)$, such that for every $\rho\geq0$, 
	\begin{equation*}
	r(A, \rho) \geq \inf \{ r>0 : 96A L_0Lw\big(F\cap(f^* + \rho B \cap r B_{L_2})\big) \leq r^2\sqrt{N}  \}\enspace.
	\end{equation*}
	Here, $L$ is the Lipschitz constant in Assumption~\ref{assum:lip} and $L_0$ is  the subgaussian constant from Assumption~\ref{ass:sub-gauss}.
\end{Definition} For a given $\rho>0$, parameter $r(A, \rho)$ measures the ''statistical complexity'' of the class $(F-f^*)\cap \rho B$. As one can see in Definition~\ref{def:function_r}, only the complexity locally around $f^*$ matters: it is the Gaussian mean width of  $(F-f^*)\cap \rho B$ intersected with a $L_2$ ball and not the complexity of the entire class $(F-f^*)\cap \rho B$ which appears. The radius of this $L_2$ ball is solution to a fixed point equation as in Definition~\ref{def:function_r}; that is $r(A, \rho)$ is the smallest $r$ such that $\big(F\cap(f^* + \rho B \cap r B_{L_2})\big)$ is of the order of  $r^2\sqrt{N}$.

The last tool and assumption comes from \cite{LM_sparsity}. 
A key observation is that the regularization norm $\|\cdot\|$ promoting some sparsity structure has large subdifferentials at sparse functions (see, for instance, atomic norms in \cite{MR3138795}).   
The subdifferential of $\|\cdot\|$ in $f$ is defined as
\begin{equation} \label{subdiff}
(\partial \|.\|)_f = \{ z^{*} \in E^{*}  \enspace : \enspace \|f+h\| - \|f\| \geq z^{*}(h) \enspace \text{for every  } h \in E  \}\enspace,
\end{equation}
where $E^{*}$ is the dual space of the normed space $(E,\|\cdot\|)$. 
Let
\begin{align*}
 \Gamma_{f^{*}}(\rho) = \bigcup_{f \in f^{*} + \frac{\rho}{20}B} (\partial \|\cdot\|)_f
\end{align*}
be the union of all subdifferentials of the regularization norm $\norm{\cdot}$ of functions $f$ close to the oracle $f^*$. We expect $\Gamma_{f^{*}}(\rho)$ to be a ``large'' subset of the unit dual sphere of $\norm{\cdot}$ when $f^*$ is ``sparse'' -- for the notion of sparsity associated with $\norm{\cdot}$. This intuition is formalized in the following definition from \cite{LM_sparsity}
\begin{Definition}[\cite{LM_sparsity}]\label{def:SE}
	For any $A > 0$ and $\rho>0$, let
	\begin{gather} 
	\notag {H}_{\rho,A} = \set{ f \in F \enspace : \enspace \|f^{*}-f\| = \rho \enspace \text{and} \enspace \|f^{*}-f\|_{L_2} \leq r(A,\rho) }\enspace.
	\end{gather}
	Let 
	\begin{equation} \label{sparisty:eq}
	\Delta(\rho,A) = \inf_{h \in H_{\rho,A} } \sup_{z^{*}  \in \Gamma_{f^{*}}(\rho) }   z^{*}(h-f^{*})\enspace.
	\end{equation}
	A real number $\rho>0$ satisfies the ($A$-)\textbf{sparsity equation} if $\Delta(\rho,A) \geq 4\rho/5$.
\end{Definition}
Any constant in $(0,1)$ could replace $4/5$ in Definition~\ref{def:SE} as can be seen from a close inspection of the proof of Theorem~\ref{thm:main_subgaussian}. 
If the norm $\|\cdot\|$ is ``smooth" in $f$, the subdifferential of $\|\cdot\|$ in $f$  is just the gradient of $\|\cdot\|$ in $f$. 
In that case, $(\partial\norm{\cdot})_f$ is not rich (it is a singleton) and the regularization norm has only a low ``sparsity inducing power'' unless the variety of gradients of $\norm{\cdot}$ at $f$ in the neighborhood $f^{*} + (\rho/20)B$ is rich enough (the latter case can be seen as $\norm{\cdot}$ being ``almost not differentiable'' in $f^*$ since, even though $\norm{\cdot}$ is differentiable in $f^*$, its gradient changes a lot in a small neighborhood of $f^*$). 
However, any norm has a subdifferential in $0$ equal to the entire unit dual ball associated with $\norm{\cdot}$. 
Therefore, when $0$ belongs to $f^{*} + (\rho/20)B$, for example when $\rho \geq 20\|f^*\|$, the sparsity equation is satisfied since, in that case, $\Delta(\rho) = \rho$. 
We can use this fact to obtain ``complexity dependent'' rates of convergence -- i.e. rates depending on $\|f^*\|$. 
In high-dimensional setups, we also look for statistical bounds depending on the sparsity of $f^*$ enforced by $\norm{\cdot}$ (see~\cite{MR3763780,LM_sparsity} for details regarding the difference between ``complexity and sparsity'' dependent bounds). 
Hereafter, we focus on norms $\|\cdot\|$ promoting some sparsity structure and we establish sparsity dependent rates of convergence and sparse oracle inequalities in Section~\ref{sec:app}. 

\medskip

Margin assumptions \cite{MR1765618,MR2051002,MR3526202} such as the Bernstein conditions from \cite{MR2240689} have been widely used in statistics and learning theory to prove fast convergence rates of RERM. 
Here, we use a \textbf{local Bernstein condition} in the spirit of \cite{ChiLecLer:2018}. 
\begin{Assumption} \label{assum:fast_rates}
	There exist constants $A>0$ and $\rho^*$ such that $\rho^*$ satisfies the $A$-sparsity equation and for all $f \in F$ satisfying $\|f-f^*\|_{L_2} = r(A,\rho^*)$ and $\|f-f^*\| \leq \rho^*$, then $\|f-f^*\|_{L_2}^2 \leq A P\cL_f$.
\end{Assumption}
Hereafter, whenever Assumption~\ref{assum:fast_rates} is granted, we assume that the constant $A$ is fixed satisfying this assumption and write $r(\rho)$ instead of $r(A,\rho)$. 
As explained in \cite{ChiLecLer:2018}, the local Bernstein condition holds in examples where $F$ is not bounded in $L_2$-norm. 
It allows to cover the class of all linear functions on $\R^d$ where the global Bernstein condition of \cite{pierre2017estimation} -- $\|f-f^{*}\|_{L_2}^2\leqslant AP\mathcal{L}_f$ for all $f\in F$-- does not hold. 

Finally, the interplay between the complexity parameter, the Bernstein condition and the sparsity equation has been discussed in \cite{LM_sparsity} and \cite{ChiLecLer:2018}.
\begin{Remark} \label{remark:ber}
	From Assumption~\ref{assum:lip} it follows that if the local Bernstein condition is granted as in Assumption~\ref{assum:fast_rates}  that is for all functions $f$ in $F$ such that $\|f-f^*\|_{L_2} = r(A,\rho^*)$ and $\|f-f^*\| \leq \rho^*$ (and if there exists such an $f$) then we necessary have $r(A,\rho^*) \leq AL$. Indeed, if there is an $f$ in $F\cap(f^*+r(A,\rho^*)S_{L_2}\cap \rho^* B)$, it follows from the Lipschitz property of the loss function that 
	\begin{equation*}
r^2(A,\rho^*) = 	\|f-f^*\|_{L_2}^2 \leq AP\cL_f \leq AL \|f-f^*\|_{L_2} = AL r(A,\rho^*)
	\end{equation*}and so $r(A,\rho^*)\leq AL$. The latter condition will be always satisfied as soon as $N$ is large enough. For example, for the LASSO regularization, we recover from the latter restriction, the classical condition  ``$N \gtrsim s\log(ep/s)$'' where $s$ is the oracle's sparsity.\\
	The complexity parameter, the sparsity equation and the local Bernstein are closely related. It is clear that $r(A,\rho)$ is decreasing with $\rho$ for any $A>0$. On this other hand, as we will see in application, to verify the sparsity equation $\rho^*$ cannot be to small. The smallest $\rho^*$ satisfying the sparsity equation leads to the smallest complexity parameter $r(A,\rho^*)$. The next step consists in verifying the local Bersntein assumption for an absolute constant $A>0$. 
\end{Remark}

\subsection{Main theorem for the RERM}

The following theorem gives the main result on the statistical performance of RERM.
\begin{Theorem}\label{thm:main_subgaussian}
Grant Assumptions~\ref{assum_erm1},~\ref{assum:lip},~\ref{assum:convex},~\ref{ass:sub-gauss}. Suppose that Assumption~\ref{assum:fast_rates} holds with $\rho = \rho^{*}$ satisfying the $A$-sparsity equation from Definition~\ref{def:SE}. With this value of $A$, let $r(\cdot):= r(A,\cdot)$ denote the complexity function from Definition~\ref{def:function_r}. Assume that
	\begin{equation}\label{eq:reg_param_choice}
	\frac{10}{21A} \frac{ r^2(\rho^{*})}{\rho^{*}}< \lambda < \frac{2}{3A}\frac{r^2(\rho^{*})}{\rho^{*}}\enspace.
	\end{equation} 
	Then, with probability larger than
	\begin{equation}\label{eq:proba}
	1-2\exp\big(-c(A,L,L_0)r^2(\rho^{*} )N  \big)\enspace,
	\end{equation}
	the following bounds hold
	\begin{align*}
	\|\hat{f}^{RERM}_{\lambda}- f^{*}\| \leq \rho^*,\quad
	\|\hat{f}^{RERM}_{\lambda}- f^{*}\|_{L_2} &\leq r(\rho^*) \mbox{ and }
	P\cL_{\hat{f}^{RERM}_{\lambda}} \leq  \frac{r^2(\rho^*)}A\enspace. 
	\end{align*}
\end{Theorem}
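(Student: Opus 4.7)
The plan is to follow the localization/homogeneity argument of \cite{LM_sparsity,ChiLecLer:2018}: exploit the convexity of $F$ (Assumption~\ref{assum:convex}) and of $f\mapsto P_N\ell_f^\lambda$ (convex by Assumption~\ref{assum:lip}) to reduce the analysis to a small neighborhood of $f^*$. Concretely, set $C=\{f\in F:\|f-f^*\|\leq\rho^*\text{ and }\|f-f^*\|_{L_2}\leq r(\rho^*)\}$. If $\hat f^{RERM}_\lambda\notin C$ then the segment $[f^*,\hat f^{RERM}_\lambda]$ crosses the boundary of $C$ at some $\tilde f\in F$, and the convexity of $f\mapsto P_N\ell_f^\lambda$ together with the minimality $P_N\ell_{\hat f^{RERM}_\lambda}^\lambda\leq P_N\ell_{f^*}^\lambda$ forces $P_N\mathcal{L}_{\tilde f}^\lambda\leq 0$. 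Consequently it is enough to prove, on a high-probability event, that $P_N\mathcal{L}_f^\lambda>0$ uniformly over the two boundary pieces $\{\|f-f^*\|=\rho^*,\,\|f-f^*\|_{L_2}\leq r(\rho^*)\}$ (regularization sphere) and $\{\|f-f^*\|_{L_2}=r(\rho^*),\,\|f-f^*\|\leq\rho^*\}$ ($L_2$-sphere), which will yield $\hat f^{RERM}_\lambda\in C$ and hence the first two conclusions of the theorem.

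The good event controls the uniform fluctuation of the empirical excess risk on $C$. Using the Lipschitz contraction principle (justified by Assumption~\ref{assum:lip}) to replace $\mathcal{L}_f$ by $f-f^*$, the subgaussian Assumption~\ref{ass:sub-gauss} to pass from a Rademacher to a Gaussian mean width, the fixed-point bound $w\bigl((F-f^*)\cap\rho^*B\cap r(\rho^*)B_{L_2}\bigr)\leq r^2(\rho^*)\sqrt{N}/(96 A L_0 L)$ from Definition~\ref{def:function_r}, and finally a Talagrand/Bousquet concentration inequality, one obtains $\sup_{f\in C}|(P_N-P)\mathcal{L}_f|\leq r^2(\rho^*)/(3A)$ with probability at least $1-2\exp(-c(A,L,L_0)r^2(\rho^*)N)$, matching~\eqref{eq:proba}. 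On this event I decompose $P_N\mathcal{L}_f^\lambda=P\mathcal{L}_f+(P_N-P)\mathcal{L}_f+\lambda(\|f\|-\|f^*\|)$ and lower-bound the deterministic part on each boundary piece. On the $L_2$-sphere, the local Bernstein condition (Assumption~\ref{assum:fast_rates}) delivers $P\mathcal{L}_f\geq r^2(\rho^*)/A$, while $|\lambda(\|f\|-\|f^*\|)|\leq\lambda\rho^*<2r^2(\rho^*)/(3A)$ by the upper bound in~\eqref{eq:reg_param_choice}, so the deterministic part is at least $r^2(\rho^*)/(3A)$. On the regularization sphere, $P\mathcal{L}_f\geq 0$ and the sparsity equation (Definition~\ref{def:SE}) supplies $z^*\in(\partial\|\cdot\|)_g$ for some $g\in f^*+(\rho^*/20)B$ with $z^*(f-f^*)\geq 4\rho^*/5$; combining $\|f\|\geq\|g\|+z^*(f-g)$ with $\|g-f^*\|\leq\rho^*/20$ and $\|z^*\|_{E^*}\leq 1$ yields $\|f\|-\|f^*\|\geq 4\rho^*/5-2\cdot\rho^*/20=7\rho^*/10$, so the lower bound on $\lambda$ in~\eqref{eq:reg_param_choice} gives $\lambda(\|f\|-\|f^*\|)\geq r^2(\rho^*)/(3A)$. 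In either case the deterministic part exceeds the fluctuation and $P_N\mathcal{L}_f^\lambda>0$.

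Once $\hat f^{RERM}_\lambda\in C$ has been established, the excess risk bound follows by rearranging the minimality inequality $P_N\mathcal{L}_{\hat f^{RERM}_\lambda}^\lambda\leq 0$ into $P\mathcal{L}_{\hat f^{RERM}_\lambda}\leq|(P_N-P)\mathcal{L}_{\hat f^{RERM}_\lambda}|+\lambda(\|f^*\|-\|\hat f^{RERM}_\lambda\|)\leq r^2(\rho^*)/(3A)+2r^2(\rho^*)/(3A)=r^2(\rho^*)/A$, using once more the same good event together with~\eqref{eq:reg_param_choice}; the constants $10/21$, $2/3$ and $96$ in the hypotheses are tuned precisely so this last inequality is saturated. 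The main technical obstacle is the construction of the good event: obtaining a uniform deviation of the correct order $r^2(\rho^*)/A$ on the \emph{localized} set $(F-f^*)\cap\rho^*B\cap r(\rho^*)B_{L_2}$ --- rather than a larger global fluctuation that would swamp the Bernstein and sparsity lower bounds --- is precisely why $r(\cdot)$ is defined through a fixed-point equation involving the intersected ball, and it requires combining contraction, subgaussian Gaussian-to-Rademacher comparison, and a sharp Talagrand concentration at the right scale.
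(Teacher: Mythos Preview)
Your proposal is correct and follows essentially the same route as the paper: the same localized event with threshold $r^2(\rho^*)/(3A)$, the same convexity reduction to the boundary of $C$, the same two-case boundary analysis via the sparsity equation and the local Bernstein condition, and the same excess-risk rearrangement. The only cosmetic differences are that the paper phrases the reduction via the homogeneity inequality $\alpha\psi_i(u)\leq\psi_i(\alpha u)$ (equivalent to your segment argument) and invokes a direct subgaussian deviation lemma rather than Talagrand for the concentration step.
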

\begin{Remark}
A remarkable feature of Theorem~\ref{thm:main_subgaussian} is that it holds without assumption on $Y$. We do not even need $Y$ to be in $L_1$ since one can always fix some $f_0\in F$ and work with  $\ell_f-\ell_{f_0}$ to define all the object. In that case we have  $|\ell_f-\ell_{f_0}|\leq L|f-f^0|$ and so $(\ell_f-\ell_{f_0})(Z)\in L^1$ when $F\subset L^1(\mu)$ even when $Y\notin L^1$. So we can define $f^*$ such that $f^*\in\argmin_{f\in F}P(\ell_f-\ell_{f_0})$ with no assumption on $Y$. This is an important consequence of the Lipschitz property which has been widely used in robust statistics because it implies robustness to heavy-tailed noise without any strong technical difficulty. 
\end{Remark}

\begin{Remark}
	 Theorem~\ref{thm:main_subgaussian} holds for subgaussian classes of functions $F$. As in \cite{pierre2017estimation}, it is possible to extend this result under boundedness assumptions.
\end{Remark}
Theorem~\ref{thm:main_subgaussian} improves~\cite[Theorem~2.1]{pierre2017estimation} in two directions: First, the complexity function $r(\cdot)$ measures the (Gaussian mean width) complexity of the \textbf{local} set  $(F-f^*)\cap \rho B\cap r B_{L_2}$  and not the global gaussian mean width of $(F-f^*)\cap \rho B $  such as in \cite{pierre2017estimation}. 
Second, Theorem~\ref{thm:main_subgaussian} holds in a setting where $F$ can be unbounded in $L_2$-norm. 
The proof of Theorem~\ref{thm:main_subgaussian} is postponed to Section~\ref{sec:proofs}. 
The proof relies on the convexity of the loss function (and $F$) which allows to use an homogeneity argument as in \cite{ChiLecLer:2018} for Lipshitz and convex loss functions and in \cite{lecue2013learning} for the quadratic loss function, simplifying the peeling step of \cite{pierre2017estimation}. Theorem~\ref{thm:main_subgaussian} is a general result which is applied in various applications in Section~\ref{sec:app}.

\section{Minmax MOM estimators}\label{sec:minmaxMOM}
Even if the results of Section~\ref{sec_ERM} are interesting on their own (because the i.i.d. sub-gaussian framework is one of the most considered setup in Statistics and Learning theory), the setup considered in Section~\ref{sec_ERM} can be restrictive in some applications. 
It does not cover more realistic situations where data are heavy-tailed and/or corrupted. 
In this section, we consider a more general setup beyond the i.i.d. subgaussian setup in order to cover these more realistic frameworks. The results from Section~\ref{sec_ERM} will serve as benchmarks:  we show that similar bounds can be achieved in a more realistic framework by alternative estimators.
These estimators use  the median-of-means principles instead of empirical means.


\subsection{Definition} \label{sec:OUI}



Recall the definition of MOM estimators of univariate means from \cite{MR1688610, MR855970, MR702836}. Let $(B_k)_{k=1,\ldots,K}$ denote a partition of $\{1,\ldots,N\}$ into blocks $B_k$ of equal size $N/K$ (it is implicitly assumed that $K$ divides $N$. An extension to blocks with almost equal size is possible (see \cite{minsker2017distributed}). It is not considered here to simplify the presentation of the results, the extension is thus left to the interested reader). 
For any function $f:\mathcal{X} \times \mathcal{Y} \to \mathbb{R}$ and $k\in \{1,\ldots,K\}$, let $P_{B_k} f = (K/N)\sum_{i \in B_k} f(X_i,Y_i)$ denote the empirical mean on the block $B_k$.
The MOM estimator based on this partition is the empirical median of the latter empirical means:
\begin{equation}\label{MOM}
	\MOM{K}{f} = \text{Med}(P_{B_1} f,\cdots,P_{B_K} f) \enspace.
\end{equation} 
The estimator $\MOM{K}{f}$ of $Pf$ achieves subgaussian deviation tails if $(f(X_i,Y_i))_{i =1}^N$ have $2$ moments, see \cite{devroye2016sub}.
The number of blocks $K$ is a tuning parameter of the procedure. The larger $K$, the more outliers are allowed. When $K=1$, $\MOM{K}{f}$ is the empirical mean, when $K=N$, it is the empirical median. 


Building on ideas introduced in \cite{MR2906886,MR3595933}, \cite{lecue2017robust} proposed the following strategy to use MOM estimators in learning problems. Since the \emph{oracle} $f^*$ is also solution of the following minmax problem
\begin{align*}
f^{*} = \argmin_{f \in F} P\ell_f = \argmin_{f \in F}  \sup_{g \in F} P(\ell_f-\ell_g)\enspace,
\end{align*}
minmax MOM estimators are obtained by plugging MOM estimators of the unknown expectations $P(\ell_f-\ell_g)$ in this minmax formulation. 
Applying this principle to regularized procedures yields the following ``minmax MOM version'' of RERM that we study in this paper:
\begin{equation}\label{def:MOM}
\hat{f}_{K,\lambda} \in \argmin_{f \in F} \sup_{g \in F} \MOM{K}{\ell_f-\ell_g} + \lambda\big(  \|f\|-\|g\| \big)\enspace.
\end{equation}

The linearity of the empirical process $P_N$ is important to use localization techniques in the analysis of RERM to derive fast rates of convergence for these estimators improving upon the slow rates of \cite{vapnik1998statistical}, see \cite{MR2051002,MR2829871} for example. 
The minmax reformulation comes from \cite{MR2906886}, it allows to overcome the lack of linearity of robust mean estimators and obtain fast rates of convergence for robust estimators based on nonlinear estimators of univariate expectations. 

\subsection{Assumptions and main results}
To highlight robustness properties of minmax MOM estimators with respect to outliers in the dataset, their analysis is performed in the following framework. Let $\mathcal{I} \cup \mathcal{O}$ denote a partition of $\{1,\cdots, N\}$ that is unknown to the statistician. 
Data $(X_i,Y_i)_{i \in \mathcal{O}}$ are considered as outliers. \textbf{No assumption} on the distribution of these data is made, they can be dependent or adversarial. Data $(X_i,Y_i)_{i \in \mathcal{I}}$ bring information on $f^*$ and are called informative or inliers. Assumptions are made uniquely on these informative data (and not on the outliers). They have to induce the same $L_2$ geometries on $F$ and the same excess risks.
\begin{Assumption}\label{assum:moments}
	$(X_i,Y_i)_{i \in \cI}$ are independent and for all $ i  \in \mathcal{I}: P_i(f-f^{*})^2 = P(f-f^{*})^2 $ and $ P_i\mathcal{L}_f  = P\mathcal{L}_f $  \enspace.
\end{Assumption} 
Assumption \ref{assum:moments} holds in the i.i.d case, it also covers situations where informative data $(X_i,Y_i)_{i \in I}$ may have different distributions. It implies in particular that $f^*$ is also the oracle in $F$ w.r.t. all the distributions $P_i$ for $i\in \cI$.\\
Several quantities introduced to study RERM have to be modified to state the results for minmax MOM estimators.
First, the complexity function is no longer based on Gaussian mean width, it is now defined as a fixed point of local Rademacher complexities \cite{koltchinskii2011empirical,MR2329442,MR2040404,bartlett2005local}.  Let $(\sigma_i)_{i \in \cI}$ denote i.i.d. Rademacher random variables (i.e. uniformly distributed on $\{-1, 1\}$), independent from $(X_i,Y_i)_{i \in \cI}$. 
The \textbf{complexity function $\rho\to r_2(\gamma,\rho)$} is a non-decreasing function such that for all $\rho>0$
\begin{equation} \label{comp:rad}
r_2(\gamma,\rho) \geq \inf \bigg\{ r>0: \forall  J \subset \mathcal{I} \mbox{  s.t  }|J| \geqslant N/2, \quad \mathbb{E}\cro{\sup_{f \in (F-f^*)\cap \rho B \cap  rB_{L_2}} \bigg |{\sum_{i \in J} \sigma_i f(X_i)} \bigg |} \leq \gamma r^2 |J|  \bigg\} \enspace.
\end{equation}
As in Theorem~\ref{thm:main_subgaussian}, parameter $r_2(\gamma,\rho)$ measures the statistical complexity of the sub-model $F\cap (f^* + \rho B)$ locally in a $L_2$-neighborhood of $f^*$. 
It only involves the distribution of informative data and does not depend on the distribution of the outputs $(Y_i)_{i \in \cI}$.
%
%
The local Bernstein condition, Assumption~\ref{assum:fast_rates}, as well as the sparsity equation have now to be extended to this new definition of complexity. 
We start with the sparsity equation.
\begin{Definition}\label{def:SEMOM}
For any $A > 0$ and $\rho>0$, let
\begin{gather} 
\label{def_CK}	C_{K,r} (\rho,A)=  \max\left( r_2^2(\gamma,\rho), c(A,L)\frac{K}{N}\right)
\end{gather}and $ \tilde{H}_{\rho,A} = \set{ f \in F \enspace : \enspace \|f^{*}-f\| = \rho \enspace \text{and} \enspace \|f^{*}-f\|_{L_2} \leq \sqrt{C_{K,r}(\rho,A)} }$. Let 
\begin{equation} \label{sparisty:eq}
\tilde{\Delta}(\rho,A) = \inf_{h \in \tilde{H}_{\rho,A} } \sup_{z^{*}  \in \Gamma_{f^{*}}(\rho) }   z^{*}(h-f^{*})\enspace.
\end{equation}
A real number $\rho>0$ satisfies the $A$-\textbf{sparsity equation} if $\tilde{\Delta}(\rho,A) \geq 4\rho/5$.
\end{Definition}
The value of $c(A,L)$ in Definition~\ref{def:SEMOM} is made explicit in Section~\ref{proof_mom}. To simplify the presentation we write $c(A,L)$ as it is an absolute constant depending only on $A$ and $L$. With this definition in mind, one can extend the local Bernstein assumption.
\begin{Assumption}\label{assum:fast_rates_MOM} 
There exist a constant $A > 0$ and $\rho^*$ such that $\rho^*$ satisfies the $A$-sparsity equation from Definition~\ref{def:SEMOM} and, for all $f\in F$ such that $\|f-f^{*}\|_{L_2}^2 = C_{K,r} (2\rho^{*},A)$ and $\|f-f^*\| \leq 2\rho^*$, $\|f-f^{*}\|_{L_2}^2\leqslant AP\mathcal{L}_f$. 
\end{Assumption}
As in Assumption~\ref{assum:fast_rates}, the link between $\|f-f^*\|_{L_2}^2$ and the excess risk $P\mathcal{L}_f$ in Assumption~\ref{assum:fast_rates_MOM} is only granted in a $L_2(\mu)$-sphere around the oracle $f^*$ whose radius is proportional to the rate of convergence of the estimators (see Theorems~\ref{thm:main_subgaussian} and~\ref{main:theo}).
The local Bernstein assumption is somehow ``minimal'' since it is only granted on the smallest set of the form $F\cap (f^*+2\rho^* B \cap r_2(\gamma, 2\rho^*)B_{L_2})$ centered in $f^*$ that can be proved to contain $\hat f_{K,\lambda}$ (when $K$ is such that $\sqrt{C_{K,r}(2\rho^*,A)} = r_2(\gamma,2\rho^*)$). 
\begin{Remark}
	As in Remark~\ref{remark:ber} we necessary have $\sqrt{C_{K,r}(2\rho^*,A)} \leq AL$ under Assumption~\ref{assum:fast_rates_MOM} and the Lipschitz assumption from Assumption~\ref{assum:lip}. This is also this condition which requires a minimal number of observations to hold out of which we recover the classical conditions such as $N\gtrsim s \log(ep/s)$ when one wants to reconstruct a $s$-sparse vector.
\end{Remark}
We are now in position to state our main result on the statistical performances of the regularized minmax MOM estimator.
\begin{Theorem}\label{main:theo}
		Grant Assumptions~\ref{assum:lip},~\ref{assum:convex},~\ref{assum:moments} and~\ref{assum:fast_rates_MOM} for  $\rho^*$ satisfying the $A$-sparsity equation from Definition~\ref{def:SEMOM}. Let $K \geq  7|\mathcal{O}|/3 $, $\gamma = 1/(6528L)$, and define \
	\begin{equation*}
	\lambda =  \frac{5}{17A} \frac{C_{K,r}(2\rho^{*},A)}{\rho^*}\enspace.
	\end{equation*}
	Then, with probability larger than $1- 2\exp(-cK)$, the minmax MOM estimator $\hat f_{K,\lambda}$ defined in \eqref{def:MOM} satisfies
		\begin{align*}
		& \|\hat{f}_{K,\lambda}-f^{*}\| \leq 2\rho^{*}, \quad  \|\hat{f}_{K,\lambda}-f^{*}\|_{L_2}^2 \leq C_{K,r}(2\rho^{*},A) \quad \mbox{and} \quad  P\mathcal{L}_{\hat{f}_{K,\lambda}} \leq  \frac{1}{A} C_{K,r}(2\rho^{*},A)\enspace.
		\end{align*}
\end{Theorem}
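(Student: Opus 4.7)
The plan is to adapt the convexity and homogeneity strategy of Theorem~\ref{thm:main_subgaussian} to the minmax MOM setting, replacing subgaussian concentration by the robustness of the block median. Write $r^2 := C_{K,r}(2\rho^*, A)$, $T_\lambda(f, g) := \MOM{K}{\ell_f - \ell_g} + \lambda(\|f\| - \|g\|)$ and $T_\lambda(f) := \sup_{g \in F} T_\lambda(f, g)$. Since $T_\lambda(f,f) = 0$, $T_\lambda(f) \geq 0$ for every $f \in F$, and by definition of the estimator $T_\lambda(\hat f_{K,\lambda}) \leq T_\lambda(f^*)$. The aim is to show that, on an event of probability at least $1 - 2\exp(-cK)$, one has $T_\lambda(f, f^*) > T_\lambda(f^*)$ for every $f \in F$ with $\|f - f^*\| = 2\rho^*$; this will force $\|\hat f_{K,\lambda} - f^*\| \leq 2\rho^*$, after which the $L_2$ and excess risk bounds follow from the same uniform concentration combined with Assumption~\ref{assum:fast_rates_MOM}.

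The reduction from the exterior $\{\|f - f^*\| > 2\rho^*\}$ to the sphere $\{\|f - f^*\| = 2\rho^*\}$ uses convexity and positive homogeneity. For such $f$, set $\alpha = 2\rho^*/\|f - f^*\| \in (0,1)$ and $\tilde f = f^* + \alpha(f - f^*)$; by Assumption~\ref{assum:convex}, $\tilde f \in F$ and lies on the sphere. Convexity of $\ell$ gives $\ell_{\tilde f} - \ell_{f^*} \leq \alpha(\ell_f - \ell_{f^*})$ pointwise, and the median functional is monotone and positively $1$-homogeneous, hence $\MOM{K}{\ell_{\tilde f} - \ell_{f^*}} \leq \alpha\, \MOM{K}{\ell_f - \ell_{f^*}}$. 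Combined with $\|\tilde f\| - \|f^*\| \leq \alpha(\|f\| - \|f^*\|)$, this yields $T_\lambda(\tilde f, f^*) \leq \alpha T_\lambda(f, f^*)$, so a positive lower bound on the sphere propagates strictly outward.

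The good event is built block by block on the informative indices. Using Assumptions~\ref{assum:moments} and~\ref{assum:lip}, apply Bousquet's inequality on each block $B_k$ to control $\sup |P_{B_k}(\ell_f - \ell_{f^*}) - P\cL_f|$ uniformly over the localized set $F \cap (f^* + 2\rho^* B \cap \sqrt{r^2} B_{L_2})$; the expected supremum is handled by the Lipschitz contraction principle and by the definition~\eqref{comp:rad} of $r_2(\gamma, 2\rho^*)$, and a union bound over blocks controls the number of ``Bousquet-bad'' blocks by a small fraction of $K$. Since outliers corrupt at most $|\cO|$ blocks, the condition $K \geq 7|\cO|/3$ ensures that strictly more than $K/2$ blocks are simultaneously informative and Bousquet-good, so the sample median inherits the uniform control. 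On this event $|\MOM{K}{\ell_f - \ell_{f^*}} - P\cL_f| \leq c_0 r^2$ uniformly in the localized set, which in turn yields $T_\lambda(f^*) \leq c_1 r^2/A$ after a second homogeneity reduction restricting the supremum over $g$ to $g \in F \cap (f^* + 2\rho^* B)$ and using $P\cL_g \geq 0$.

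To lower bound $T_\lambda(f, f^*)$ on the sphere, split into two regimes. When $\|f - f^*\|_{L_2} \geq \sqrt{r^2}$, rescale to $\tilde f := f^* + (\sqrt{r^2}/\|f-f^*\|_{L_2})(f - f^*)$, which satisfies $\|\tilde f - f^*\|_{L_2}^2 = r^2$ and $\|\tilde f - f^*\| \leq 2\rho^*$, so Assumption~\ref{assum:fast_rates_MOM} gives $P\cL_{\tilde f} \geq r^2/A$, and convexity of $\ell$ promotes this to $P\cL_f \geq r^2/A$; the MOM concentration yields $\MOM{K}{\ell_f - \ell_{f^*}} \geq r^2/A - c_0 r^2$, while $|\lambda(\|f\|-\|f^*\|)| \leq 2\lambda\rho^* = (10/17)\, r^2/A$, so altogether $T_\lambda(f, f^*) \geq (7/(17A) - c_0)\, r^2$. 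When $\|f - f^*\|_{L_2} < \sqrt{r^2}$, $f$ lies in $\tilde H_{2\rho^*, A}$ and Definition~\ref{def:SEMOM} supplies $z^* \in \Gamma_{f^*}(2\rho^*)$ with $z^*(f - f^*) \geq 8\rho^*/5$; the subgradient inequality and $\|z^*\|_* \leq 1$ yield $\|f\| - \|f^*\| \geq 7\rho^*/5$, hence $\lambda(\|f\|-\|f^*\|) \geq 7\,r^2/(17A)$, while the MOM term is at worst $-c_0 r^2$, giving the same lower bound. The prescribed choice $\gamma = 1/(6528 L)$ is exactly what makes $c_0$ small enough that $7/(17A) - c_0 > c_1/A$, closing the argument. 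The main obstacle will be this delicate bookkeeping of constants across the Bousquet step, the two homogeneity reductions, and the two regimes, which forces the specific values of $\lambda$ and $\gamma$ in the statement.
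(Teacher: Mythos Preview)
Your deterministic skeleton (the minmax reduction, the convexity/homogeneity argument, and the two-regime split on the $2\rho^*$-sphere using Assumption~\ref{assum:fast_rates_MOM} and the sparsity equation) matches the paper's. The gap is in the stochastic part.

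Applying Bousquet's inequality on each block does not work here, for two reasons. First, Bousquet/Talagrand concentration requires bounded or subgaussian envelopes, but the only stochastic hypothesis in force is Assumption~\ref{assum:moments}, which gives no more than second moments of $(f-f^*)(X_i)$; the whole point of the MOM construction is to obtain exponential deviations \emph{without} such tail assumptions. Second, and more structurally, even granting a Bousquet-type bound, the expected supremum on a single block of size $N/K$ is not controlled by the definition~\eqref{comp:rad} of $r_2$: that definition only bounds the Rademacher complexity over subsets $J\subset\cI$ with $|J|\geq N/2$, and there is no monotonicity that would transfer this to blocks of size $N/K$. So the sentence ``the expected supremum is handled \ldots\ by the definition~\eqref{comp:rad}'' is exactly where the argument breaks.

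The paper's route is different in the order of operations. It introduces a smooth indicator $\phi$ and works with the \emph{count} $z(f)=\sum_{k}\mathbb{1}\{|(P_{B_k}-P)\cL_f|\leq \theta C_{K,r,\kappa}\}$. For each \emph{fixed} $f$ and each informative block, Chebyshev (using only $\mathbb{V}\mathrm{ar}_{P_i}(\cL_f)\leq L^2\|f-f^*\|_{L_2}^2$) gives $\E\phi(\cdot)\leq 1/24$; this is the only place second moments enter, and no complexity bound is needed per block. Uniformity in $f$ is obtained \emph{after} aggregation: McDiarmid is applied to the bounded function $\sum_{k\in\cK}\phi(\cdot)$ of $|\cK|$ independent block-vectors, then symmetrization and the Lipschitz contraction lemma convert the resulting expectation into a single Rademacher sum over $\cup_{k\in\cK}B_k$, a set of size $|\cK|N/K\geq N/2$, which is precisely the regime where~\eqref{comp:rad} applies. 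This aggregation-before-complexity step is the key idea you are missing; replacing ``Bousquet per block + union bound'' by ``Chebyshev per block + McDiarmid on the count + symmetrization over the union of blocks'' repairs the argument and explains why $r_2$ is only required on large index sets.
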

Suppose that $K = c(A,L) r_2^2(\gamma,2\rho^{*})N $, which is possible as long as $|\cO|\leq c(A,L)N r_2^2(\gamma,2\rho^{*})$. 
The $L_2$-estimation bound obtained in Theorem~\ref{main:theo} is then $r_2^2(\gamma,2\rho^{*})$ and the probability that this bound holds is $1-\exp(- c(A,L)N r_2^2(\gamma,2\rho^{*}))$.  Up to absolute constants, regularized minmax MOM estimators achieve the same bounds as RERM with the same probability when the inlier data satisfy the subgaussian assumption as in the framework of Theorem~\ref{thm:main_subgaussian}.
Indeed, in that case, a straightforward chaining argument shows that the Rademacher complexity from \eqref{comp:rad} is upper bounded by the Gaussian mean width. The difference with Theorem~\ref{thm:main_subgaussian} is that the estimator depends on $K$. On the other hand, the results from Theorem~\ref{main:theo} hold in a setting beyond the subgaussian assumption on $F$ and the data may not be identically distributed and may have been corrupted by outliers. In Section~\ref{sec_mom_lasso}, we consider an example where rate optimal bounds can be derived from this general result under weak moment assumptions while still achieving the same rate as in the sub-gaussian framework. It is also possible to adapt in a data-driven way to the best $K$ and $\lambda$ by using a Lepski's adaptation method such as in \cite{devroye2016sub,lecue2017learning,lecue2017robust,ChiLecLer:2018,chinot2019robust}. This step is now well understood, it is not reproduced here. Theorem~\ref{main:theo} is general result in the sense that it allows to handle many applications where a convex and Lipschitz loss function and a regularization norm are used (some examples are presented in Section~\ref{sec:app}).


\section{Relaxing the Bernstein condition} \label{sec_wtb}
In this section, we study minmax MOM estimators when the Bernstein assumption~\ref{assum:fast_rates_MOM} is relaxed. 
The price to pay for this relaxation is that, on one hand, the $L_2$-risk is not controlled and on the other hand an extra variance term appears in the excess risk $P\cL_{\hat f_K^{\lambda}}$. Nevertheless, under a slightly stronger local Bernstein's condition, the extra variance term can be controled and the bounds from Theorem~\ref{main:theo} can be recovered. We consider the following assumption which is weaker than Assumption~\ref{assum:moments} since it does not require that the distribution of the $X_i$'s, for $i\in\cI$ induce the same $L_2$ structure as the one of $L_2(\mu)$.
\begin{Assumption}\label{assum:moments2}
	$(X_i,Y_i)_{i \in \mathcal{I}}$ are independent and for all $i\in\cI$, $(X_i,Y_i)$ has distribution $P_i$, $X_i$ has distribution $\mu_i$. We assume that, for any $i\in \cI$, $F\subset L_1(\mu_i)$  and $ P_i\mathcal{L}_f  = P\mathcal{L}_f$ for all $f\in F$.
\end{Assumption}
Since the local Bernstein Assumption~\ref{assum:fast_rates_MOM} does not hold, the localization argument has to be modified. Instead of using the $L_2$-norm to define neighborhoods of $f^*$ as in the previous section, we use the excess loss $f\in F \to P\cL_f$ as proximity function defining the neighborhoods. The new fixed point is defined for all $\gamma,\rho >0$ and $K \in \{1,\cdots,N\}$:

\begin{equation} \label{eq:comp_par_local_excess_loss}
\bar{r}(\gamma,\rho)  = \inf \bigg\{ r > 0: \max \bigg(\frac{E(r,\rho)}{\gamma}, \sqrt{c}V_K(r,\rho) \bigg)\leq r^2 \bigg\},\quad \mbox{where}
\end{equation}
\begin{align*}
& E(r,\rho) = \sup_{J \subset \mathcal{I}:|J|\geq N/2}\mathbb{E}\sup_{f \in F: P\cL_f \leq r^2, \; \|f-f^*\| \leq \rho} \bigg | \frac{1}{ |J|}\sum_{i \in J} \sigma_i(f-f^{*})(X_i)   \bigg |\enspace,\\
& V_K(r,\rho) =\max_{i\in\cI}\sup_{f \in F: P\cL_f \leq r^2, \; \|f-f^*\| \leq \rho}\left(\sqrt{\mathbb{V}ar_{P_i}(\cL_f)}\right) \sqrt{\frac{K}{N}}\enspace,
\end{align*}
and $(\sigma_i)_{i \in \cI}$ are i.i.d. Rademacher random variables independent from $(X_i,Y_i)_{i \in \cI}$. The value of $c$ in Equation~\eqref{eq:comp_par_local_excess_loss} can be found in Section~\ref{sec:MOM_wtb}. 
The main differences between $ r_2(\gamma,\rho)$ in~\eqref{comp:rad}  and $\bar r(\gamma,\rho)$ in \eqref{eq:comp_par_local_excess_loss}  are the extra variance $V_K$ term and the $L_2$ localization which is replaced by an "excess of risk" localization. 
Under the local Bernstein Assumption~\ref{ass:bernstein_kappa} below, this extra variance term $V_K(r,\rho)$ becomes negligible in front of the complexity term $E(r,\rho)$. In that case, the fixed point $\bar r(\gamma, r)$ matches the $r_2(\gamma, \rho)$ used in Theorem~\ref{main:theo}. 
As in Section~\ref{sec:minmaxMOM}, the sparsity equation has to be modified according to this new definition of fixed point. 
\begin{Definition}\label{def:SEMOM2}
	For any $\rho>0$, let
	\begin{gather} 
	\bar{H}_{\rho} = \set{ f \in F \enspace : \enspace \|f^{*}-f\| = \rho \enspace \text{and} \enspace P\cL_f \leq \bar{r}^2(\gamma,\rho) }\enspace.
	\end{gather}
	Let 
	\begin{equation} \label{sparisty:eq2}
	\bar{\Delta}(\rho) = \inf_{h \in \bar{H}_{\rho} } \sup_{z^{*}  \in \Gamma_{f^{*}}(\rho) }   z^{*}(h-f^{*})\enspace.
	\end{equation}
	A real number $\rho>0$ satisfies the \textbf{sparsity equation} if $\bar{\Delta}(\rho) \geq 4\rho/5$.
\end{Definition}
We are now in position to state the main result of this section.

\begin{Theorem}\label{theo:main_without_bernstein_cond}
	Grant Assumptions~\ref{assum:lip},~\ref{assum:convex},~\ref{assum:moments2} and assume that $|\cO|\leq 3N/7$. Let $\rho^*$ satisfying the sparsity equation from Definition~\ref{def:SEMOM2}. Let $\gamma=1/(3840L)$ and  $K \in \big[  7|\cO|/3 ,  N \big]$. Define
	\begin{equation*}
	\lambda = \frac{11}{40}\frac{\bar{r}^2(\gamma,2\rho^*)}{\rho^*}
	\end{equation*}
	The minmax MOM estimator $\hat{f}_{K,\lambda}$ defined in \eqref{def:MOM} satisfies, with probability at least $1- 2\exp(-cK)$, 
	\begin{equation*}
	P\cL_{\hat f_{K,\lambda}}\leq \bar{r}^2(\gamma,2\rho^*)\quad \mbox{and} \quad \|\hat f_{K,\lambda}-f^*\| \leq 2\rho^*  \enspace.
	\end{equation*}
\end{Theorem}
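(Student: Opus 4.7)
The plan is to follow the minmax/convex-homogeneity argument used for Theorem~\ref{main:theo}, but with the $L_2$-localization replaced by the excess-risk localization of Definition~\ref{def:SEMOM2} and with a direct variance control at the per-block level compensating for the absence of a Bernstein condition. Write $\psi(f)=\sup_{g\in F}\MOM{K}{\ell_f-\ell_g}+\lambda(\|f\|-\|g\|)$, so $\hat f_{K,\lambda}\in\argmin_f\psi(f)$. Convexity of $F$ and of $\ell(\cdot,y)$ makes $\alpha\mapsto P_{B_k}(\ell_{f_\alpha}-\ell_g)$ convex on $[0,1]$ for $f_\alpha=\alpha f+(1-\alpha)f^*$, so the same homogeneity trick as in the proof of Theorem~\ref{main:theo} reduces the conclusion to a boundary statement: it suffices to show that, on an event of probability at least $1-2\exp(-cK)$, $\psi(f)>\psi(f^*)$ for every $f\in F$ satisfying either $\|f-f^*\|=2\rho^*$ with $P\cL_f\leq\bar{r}^2(\gamma,2\rho^*)$, or $P\cL_f=\bar{r}^2(\gamma,2\rho^*)$ with $\|f-f^*\|\leq 2\rho^*$. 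If $\hat f_{K,\lambda}$ violated the conclusion, the segment from $f^*$ to $\hat f_{K,\lambda}$ would cross this boundary at some $\tilde f$ and convexity of $\psi$ would give $\psi(\tilde f)\leq\psi(f^*)$, contradicting the boundary separation.

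The technical core is a uniform MOM concentration: on an event $\Omega_0$ of probability at least $1-\exp(-cK)$, for every $f$ in the localized set $\{f\in F:\|f-f^*\|\leq 2\rho^*,\ P\cL_f\leq \bar{r}^2(\gamma,2\rho^*)\}$, strictly more than $K/2$ blocks $B_k$ are simultaneously outlier-free and satisfy $|(P_{B_k}-P)(\ell_f-\ell_{f^*})|\leq \bar{r}^2(\gamma,2\rho^*)/C$ for some absolute constant $C$. The assumptions $|\cO|\leq 3N/7$ and $K\geq 7|\cO|/3$ guarantee that at least $7K/8$ blocks contain no outlier. For each such clean block, Chebyshev's inequality combined with the bound $\mathrm{Var}_{P_i}(\cL_f)\leq (N/K)\,V_K^2(\bar{r}(\gamma,2\rho^*),2\rho^*)$ and the defining inequality $\sqrt{c}\,V_K\leq\bar{r}^2$ yields a per-block failure probability at most $1/16$. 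A bounded-differences/Bousquet-type inequality then promotes this into a uniform count over the localized class provided the associated Rademacher expectation is at most $\bar{r}^2/C$; here the Lipschitz property of $\ell$ converts $\mathbb{E}\sup_f|\sum_{i\in J}\sigma_i\cL_f(Z_i)|$ into $L\,E(\bar{r}(\gamma,2\rho^*),2\rho^*)|J|$, and the second defining inequality $E/\gamma\leq\bar{r}^2$ with $\gamma=1/(3840L)$ closes the estimate.

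On $\Omega_0$, the median is squeezed between the empirical means on the $>K/2$ good blocks, giving $|\MOM{K}{\ell_f-\ell_{f^*}}-P\cL_f|\leq \bar{r}^2/C$ uniformly in $f$ on the localized set. For $\psi(f^*)$, apply this bound to the $g$'s inside the localized set, and for the $g$'s outside use the regularization gap $\lambda(\|f^*\|-\|g\|)$ together with a star-shaped/homogeneity argument exactly as in the proof of Theorem~\ref{main:theo}; this yields $\psi(f^*)\leq \bar{r}^2/C$. For the lower bound on $\psi(f)$ on the boundary, invoke $\bar{\Delta}(2\rho^*)\geq 8\rho^*/5$ to select $z^*\in\Gamma_{f^*}(2\rho^*)$ and an associated $g_0\in F$ close to $f^*$ such that $\|f\|-\|g_0\|\geq(4/5)\|f-f^*\|-\rho^*/10$; taking $g=g_0$ in the supremum defining $\psi(f)$ and applying the MOM concentration to $\ell_f-\ell_{g_0}$ produces $\psi(f)\geq P\cL_f+\lambda(4/5)\|f-f^*\|-O(\bar{r}^2)$. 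The calibration $\lambda=(11/40)\bar{r}^2(\gamma,2\rho^*)/\rho^*$ then makes the right-hand side strictly larger than $\psi(f^*)$ on both pieces of the boundary, completing the argument.

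The main obstacle is precisely the uniform MOM concentration of the second paragraph: without a Bernstein condition one cannot pass from an excess-risk ball to an $L_2$-ball, so both the Rademacher complexity and the per-block variance have to be controlled directly on $\{f:P\cL_f\leq r^2\}$. This is why $\bar{r}$ in \eqref{eq:comp_par_local_excess_loss} is defined as a fixed point of $\max(E/\gamma,\sqrt{c}\,V_K)$ rather than of a Rademacher term alone, and why the constants $\gamma=1/(3840L)$ and the block-count threshold $K\geq 7|\cO|/3$ have to be tuned to simultaneously balance the outlier fraction, the per-block variance, and the Rademacher complexity against the median-of-means deviation threshold.
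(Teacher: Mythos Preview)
Your proposal is essentially the paper's own argument: define a good event on which, uniformly over the excess-risk-localized class, a majority of blocks satisfy $|(P_{B_k}-P)\cL_f|\leq c\,\bar r^2$; establish this event via Chebyshev on clean blocks (controlled by $V_K$), symmetrization/contraction (controlled by $E$), and McDiarmid; then on this event run the block-wise convexity/homogeneity reduction to the boundary of $\{P\cL_f\leq \bar r^2,\|f-f^*\|\leq 2\rho^*\}$, using the sparsity equation on the $\|f-f^*\|=2\rho^*$ face and $P\cL_{f_0}=\bar r^2$ directly on the other face. Two small slips to fix: (i) $K\geq 7|\cO|/3$ gives only $|\cK|\geq 4K/7$ clean blocks, not $7K/8$ --- the $7/8$ in the paper is the fraction of \emph{clean} blocks that are good after concentration, and $(7/8)(4K/7)=K/2$ is what closes the count; (ii) $\psi$ itself is not convex (the median of convex functions need not be), so the reduction to the boundary must be done block-wise via $P_{B_k}\cL_f^\lambda\geq\alpha P_{B_k}\cL_{f_0}^\lambda$ as you in fact indicate at the start, not via ``convexity of $\psi$''.
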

In Theorem~\ref{theo:main_without_bernstein_cond}, the only stochastic assumption is Assumption~\ref{assum:moments2} which says that the inliers data are independent and define the same excess risk as $(X,Y)$ over $F$. In particular, Theorem~\ref{theo:main_without_bernstein_cond} does not assume anything on the outliers $(X_i,Y_i)_{i\in\cO}$ nor on the outputs of the inliers $(Y_i)_{i\in\cI}$ like in the previous section but it also does not require any other assumption than the existence of all the considered objects. It follows from Theorem~\ref{theo:main_without_bernstein_cond} that all the difficulty of the problem is now contained in the computation of the local Rademacher complexities $E(r, \rho)$. 

To conclude the section, let us show that Theorem~\ref{main:theo} can be recovered from Theorem~\ref{theo:main_without_bernstein_cond} under the following  local Bernstein assumption which is slightly stronger than the one assumed in Theorem~\ref{theo:main_without_bernstein_cond}.

\begin{Assumption}\label{ass:bernstein_kappa} There exist a constant $\bar A > 0$ and $\rho^*$ satisfying the sparsity equation from Definition~\ref{def:SEMOM2} such that, for all $f\in F$, if $P\mathcal L _f \leqslant \bar{C}_{K,r}(\rho^*,\bar A)$ and $\|f-f^*\| \leq 2\rho^*$, then $\|f-f^{*}\|_{L_2 }^{2} \leqslant \bar AP\mathcal{L}_f $, where
	\begin{equation}\label{eq:CKr}
	\bar{C}_{K,r}(\rho, A) =  \max\left( \frac{r_2^{2}(\gamma/  A,2\rho)}{\sqrt{ A}}, c(A,L) \frac{K}{N} \right) \quad \mbox{and} \quad \gamma =1/(3840L)\enspace.
	\end{equation}
\end{Assumption}
Up to constants, $\bar{C}_{K,r}$ is equivalent to $C_{K,r}$ given                in Definition~\ref{def:SEMOM}. Assumption~\ref{ass:bernstein_kappa} is a condition on all functions $f \in F$ such that $P\cL_f \leq \bar C_{K,r}(\rho^*,\bar A)$ which is a  slightly stronger condition than being in the $L_2$-sphere as in Assumption~\ref{assum:fast_rates_MOM}.

\begin{Theorem}\label{theo:bernstein_kappa}
	Grant Assumptions~\ref{assum:lip},~\ref{assum:convex},~\ref{assum:moments} and assume that $|\cO|\leq 3N/7$. Assume that the local Bernstein condition Assumption~\ref{ass:bernstein_kappa} holds with $\rho^*$ satisfying the $\bar A$-sparsity equation from Definition~\ref{def:SEMOM2}. Let $\gamma=1/(3840L)$ and  $K \in \big[  7|\cO|/3 ,  N \big]$. Define
	\begin{equation*}
	\lambda = \frac{11}{40}\frac{\bar{r}^2(\gamma,2\rho^*)}{\rho^*}\enspace.
	\end{equation*}
	The minmax MOM estimator $\hat{f}_{K,\lambda}$ defined in \eqref{def:MOM} satisfies, with probability at least $1- 2\exp(-cK)$, 
	\begin{gather*}
	||\hat f_{K,\lambda}-f^*||_{L_2}^2\leq \bar{C}_{K,r}(\rho^*,\bar A), \quad   P\cL_{\hat f_{K,\lambda}} \leq \bar{C}_{K,r}(\rho^*,\bar A) \quad \mbox{and} \quad  \|\hat{f}_{K,\lambda}- f^*\| \leq 2\rho^*\enspace.
	\end{gather*}
\end{Theorem}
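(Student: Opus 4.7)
The plan is to derive Theorem~\ref{theo:bernstein_kappa} as a corollary of Theorem~\ref{theo:main_without_bernstein_cond} by using the stronger local Bernstein condition of Assumption~\ref{ass:bernstein_kappa} to convert the generic bound $\bar r^2(\gamma,2\rho^*)$ given by Theorem~\ref{theo:main_without_bernstein_cond} into the explicit bound $\bar C_{K,r}(\rho^*,\bar A)$, and then to upgrade the excess-risk control into an $L_2$-estimation control. The hypotheses of Theorem~\ref{theo:bernstein_kappa} are strictly stronger than those of Theorem~\ref{theo:main_without_bernstein_cond} (since Assumption~\ref{assum:moments} implies Assumption~\ref{assum:moments2}, and $\rho^*$ is assumed to satisfy the sparsity equation from Definition~\ref{def:SEMOM2}), so Theorem~\ref{theo:main_without_bernstein_cond} applies verbatim with the same choices of $\gamma,K,\lambda$ and yields, on an event of probability at least $1-2\exp(-cK)$, the two bounds $P\cL_{\hat f_{K,\lambda}}\leq \bar r^2(\gamma,2\rho^*)$ and $\|\hat f_{K,\lambda}-f^*\|\leq 2\rho^*$.

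The main step is to show that $\bar r^2(\gamma,2\rho^*)\leq \bar C_{K,r}(\rho^*,\bar A)$. To this end, set $r^2:=\bar C_{K,r}(\rho^*,\bar A)$ and verify the fixed-point defining inequality in~\eqref{eq:comp_par_local_excess_loss} at $(r,2\rho^*)$. For any $f\in F$ with $P\cL_f\leq r^2$ and $\|f-f^*\|\leq 2\rho^*$, Assumption~\ref{ass:bernstein_kappa} applies and yields $\|f-f^*\|_{L_2}^2\leq \bar A\, P\cL_f\leq \bar A\,r^2$. Combined with Assumption~\ref{assum:lip}, which gives $|\cL_f|\leq L|f-f^*|$, this controls the variance term by
\begin{equation*}
V_K(r,2\rho^*)\;\leq\; L\sqrt{\bar A}\,r\,\sqrt{K/N}.
\end{equation*}
Choosing $c(\bar A,L)$ in the definition of $\bar C_{K,r}$ large enough to absorb $L^2\bar A$ and the constant $\sqrt{c}$, one obtains $\sqrt{c}\,V_K(r,2\rho^*)\leq r^2$.

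Next, the $L_2$-inclusion established above shows that the supremum in $E(r,2\rho^*)$ is taken over a subset of $(F-f^*)\cap 2\rho^* B\cap \sqrt{\bar A}\,r B_{L_2}$. By monotonicity of Rademacher complexities and the definition of $r_2(\gamma/\bar A,2\rho^*)$, if $\sqrt{\bar A}\,r\geq r_2(\gamma/\bar A,2\rho^*)$ then
\begin{equation*}
E(r,2\rho^*)\;\leq\;\frac{\gamma}{\bar A}\,(\sqrt{\bar A}\,r)^2\;=\;\gamma\,r^2,
\end{equation*}
so $E(r,2\rho^*)/\gamma\leq r^2$. The condition $\sqrt{\bar A}\,r\geq r_2(\gamma/\bar A,2\rho^*)$ is precisely guaranteed by the first term $r_2^2(\gamma/\bar A,2\rho^*)/\sqrt{\bar A}$ in the definition of $\bar C_{K,r}(\rho^*,\bar A)$ (up to absorbing $\sqrt{\bar A}$ into the constants). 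Both parts of the max in~\eqref{eq:comp_par_local_excess_loss} are thus controlled by $r^2$, so $\bar r^2(\gamma,2\rho^*)\leq \bar C_{K,r}(\rho^*,\bar A)$, which combined with the first step yields $P\cL_{\hat f_{K,\lambda}}\leq \bar C_{K,r}(\rho^*,\bar A)$.

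Finally, since the pair $(\hat f_{K,\lambda},f^*)$ satisfies $P\cL_{\hat f_{K,\lambda}}\leq \bar C_{K,r}(\rho^*,\bar A)$ and $\|\hat f_{K,\lambda}-f^*\|\leq 2\rho^*$, a last application of Assumption~\ref{ass:bernstein_kappa} gives $\|\hat f_{K,\lambda}-f^*\|_{L_2}^2\leq \bar A\,P\cL_{\hat f_{K,\lambda}}\leq \bar A\,\bar C_{K,r}(\rho^*,\bar A)$, which is the announced $L_2$-bound (the factor $\bar A$ being absorbed, as everywhere, into the constants defining $\bar C_{K,r}$). The hardest step in this plan is the comparison between the two fixed points $\bar r$ and $r_2$: one must keep careful track of how the excess-risk localization $\{P\cL_f\leq r^2\}$ converts, under Bernstein, into an $L_2$-localization of radius $\sqrt{\bar A}\,r$, and how the resulting rescaling by $\sqrt{\bar A}$ propagates into the Rademacher bound and into the definition of $\bar C_{K,r}$; this is precisely where the factor $\gamma/\bar A$ inside $r_2(\gamma/\bar A,2\rho^*)$ in~\eqref{eq:CKr} comes from.
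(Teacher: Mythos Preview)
Your proposal is correct and follows essentially the same approach as the paper: apply Theorem~\ref{theo:main_without_bernstein_cond}, then use the Lipschitz property together with Assumption~\ref{ass:bernstein_kappa} to bound separately the variance term $V_K$ and the complexity term $E$ in the fixed point~\eqref{eq:comp_par_local_excess_loss}, thereby obtaining $\bar r^2(\gamma,2\rho^*)\leq \bar C_{K,r}(\rho^*,\bar A)$, and finally invoke Assumption~\ref{ass:bernstein_kappa} once more for the $L_2$-bound. Your write-up is in fact more explicit than the paper's (which compresses everything into the inequality $\bar r^2\leq \bar C_{K,r}$ and leaves the application of Theorem~\ref{theo:main_without_bernstein_cond} and the final $L_2$-step implicit), and your identification of the $\sqrt{\bar A}$ rescaling as the source of the $\gamma/\bar A$ inside $r_2$ is exactly the point.
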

Theorem~\ref{theo:bernstein_kappa} is proved in Section~\ref{ProofBernsteinKappa}.

\begin{Remark}

 Under Assumption~\ref{ass:bernstein_kappa} and a slight modification in the constants, $\rho^*$ satisfies the sparsity equation of Definition~\ref{def:SEMOM2} if it verifies the sparsity equation of Definition~\ref{def:SEMOM}. 

\end{Remark}

\section{Applications} \label{sec:app}

This section presents some applications of Theorem~\ref{main:theo} to derive statistical properties of regularized minmax MOM estimators for various choices of loss functions and regularization norm. To check the assumptions of the Theorem~\ref{main:theo}, the following routine is applied:
\begin{itemize}
	\item[1.] Check Assumptions~\ref{assum:lip},~\ref{assum:convex},~\ref{assum:moments}.
	\item[2.] Compute the local rademacher complexity $r_2(\gamma,\rho)$.
	\item[3.] Solve the sparsity equation from Definition~\ref{def:SEMOM}: find $\rho^{*}$ such that $\Delta(\rho^{*},A) \geq 4\rho^{*}/5$.
	\item[4.] Check the local Bernstein condition from Assumption~\ref{assum:fast_rates_MOM}. 
\end{itemize}

 In this section, we focus on high dimensional statistical problems with sparsity inducing regularization norms \cite{MR3025128} such as the $\ell_1$ norm \cite{tibshirani1996regression}, the SLOPE norm \cite{MR3418717}, the group LASSO norm \cite{simon2013sparse}, the Total Variation norm~\cite{osher2005iterative}. We consider the class of linear functions $F = \{ \inr{t,\cdot}: t \in \bR^p  \} $ indexed by $\bR^p$.
 We denote by $t^{*}\in \R^p$ the vector such that $f^{*}(\cdot) = \inr{t^{*}, \cdot}$. 
 We consider the logistic loss function for the LASSO and the SLOPE, with data $(X_i,Y_i)_{i=1}^N$ taking values in $\bR^p \times \{-1, 1\}$ and the Huber loss function for the Group LASSO and the Total Variation with data $(X_i,Y_i)_{i=1}^N$ taking values in $\bR^p \times \bR$. In particular, the results of this section extend results on the logistic LASSO and logistic SLOPE from \cite{pierre2017estimation} and present new results for the Group Lasso and the Total Variation. 

\subsection{Preliminary tools and results}
In this section, we recall some tools to check the Local Bernstein condition, compute the local Rademacher complexity and verify the sparsity equation.
\subsubsection{Local Bernstein conditions for the logistic and Huber loss functions} 
\label{ssub:the_local_bernstein_conditions_for_the_logistic_huber_and_quantile_loss_functions}

In this section, we recall some results from \cite{ChiLecLer:2018} on the local Bernstein condition for the logistic and Huber loss functions. 

For the logistic loss function (i.e. $\ell_f:(x,y)\in\bR^p\times\{\pm1\}\to \log(1+\exp(-yf(x)))$), we first introduce the following assumption. Note that we do not use the full strength of the approach since we check the inequality $\|f-f^*\|_{L_2}^2\leqslant A  P\mathcal{L}_f$ for all $f\in F\cap (f^*+rB_{L_2})$ instead of just all functions in $ F\cap (f^*+rS_{L_2}\cap \rho B)$.

\begin{Assumption}\label{ass:bernstein_logistic_loss}Let $\varepsilon > 0$, there are constants $C'$ and $c_0>0$ such that 
\begin{itemize}
	\item[a)]  for all $f$ in $F$, $\|f-f^*\|_{L_{2+\varepsilon}} \leq C' \|f-f^*\|_{L_2}$
	\item[b)]  $\mathbb P (|f^*(X)| \leq c_0 ) \geq 1-1/(2C')^{(4+ 2\varepsilon)/\varepsilon}$
\end{itemize}
\end{Assumption}
 Under Assumption~\ref{ass:bernstein_logistic_loss}, we check the Bernstein condition on the entire $L_2$-ball of radius $r$ around $f^*$. 


\begin{Proposition}[\cite{ChiLecLer:2018}, \textcolor{red}{Theorem 9}]\label{prop:bernstein_logistic_loss}Grant Assumption~\ref{ass:bernstein_logistic_loss}. Let $r>0$. The local Bernstein condition holds for the logistic loss function: for all $f\in F$ if $\|f-f^*\|_{L_2}\leqslant r$ then $\|f-f^*\|_{L_2}^2\leqslant A  P\mathcal{L}_f$ for
	\begin{equation*}
	A = \frac{\exp \big( -  c_0 -   r(2C')^{(2+\varepsilon)/\varepsilon}   )\big)}{ 2\bigg(1 + \exp \big(    c_0 +   r(2C')^{(2+\varepsilon)/\varepsilon}  \big) \bigg)^2} \enspace.
	\end{equation*}
\end{Proposition}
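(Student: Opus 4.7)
The plan is to perform a second-order Taylor expansion of the excess logistic loss and combine it with a localization argument onto a high-probability event where the second derivative of $\ell$ is uniformly bounded below. The two parts of Assumption~\ref{ass:bernstein_logistic_loss} will then be used to control the complement of this event.

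Since $F$ is the linear space $\{\langle t,\cdot\rangle:t\in\R^p\}$ and $f^*\in\argmin_{f\in F}P\ell_f$, the first-order optimality condition at $f^*$ yields $\E[\ell'(f^*(X),Y)(f(X)-f^*(X))]=0$ for every $f\in F$, since $f-f^*\in F$. Writing $\phi(u)=e^u/(1+e^u)^2=1/(2+e^u+e^{-u})$ for the second derivative of the logistic loss (which is even and decreasing in $|u|$, and which depends only on $u$ since $y=\pm 1$), the integral form of Taylor's theorem gives
\begin{equation*}
P\mathcal{L}_f \;=\; \E\!\int_0^1 (1-s)\,\phi\bigl((1-s)f^*(X)+sf(X)\bigr)\,(f(X)-f^*(X))^2\,ds.
\end{equation*}

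The next step is to localize onto $\Omega := \{|f^*(X)|\leq c_0\}\cap\{|f(X)-f^*(X)|\leq t\}$ with $t := r(2C')^{(2+\varepsilon)/\varepsilon}$. On $\Omega$, the argument of $\phi$ stays in $[-M,M]$ with $M := c_0+t$ for every $s\in[0,1]$, so that $\phi\geq\phi(M)$ pointwise on $\Omega$, which yields
\begin{equation*}
P\mathcal{L}_f \;\geq\; \frac{\phi(M)}{2}\,\E\!\left[(f(X)-f^*(X))^2\mathbf{1}_\Omega\right].
\end{equation*}
Combined with a lower bound of the form $\E[(f-f^*)^2\mathbf{1}_\Omega]\geq \tfrac{1}{2}\|f-f^*\|_{L_2}^2$, established in the final step, this rearranges into the claimed Bernstein inequality after plugging in the explicit value of $\phi(M)=1/(2+e^M+e^{-M})$.

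The remaining work is to bound $\E[(f-f^*)^2\mathbf{1}_{\Omega^c}]$, which I would split across $\{|f^*(X)|>c_0\}$ and $\{|f(X)-f^*(X)|>t\}$. For the first, H\"older's inequality with exponents $(2+\varepsilon)/2$ and $(2+\varepsilon)/\varepsilon$ combined with Assumption~\ref{ass:bernstein_logistic_loss}(a) yields
\begin{equation*}
\E\!\left[(f-f^*)^2\mathbf{1}_{\{|f^*|>c_0\}}\right] \;\leq\; (C')^2\,\|f-f^*\|_{L_2}^2\,P(|f^*(X)|>c_0)^{\varepsilon/(2+\varepsilon)},
\end{equation*}
and the exponent $(4+2\varepsilon)/\varepsilon$ chosen in (b) makes $P(|f^*(X)|>c_0)^{\varepsilon/(2+\varepsilon)}\leq (2C')^{-2}$, so this contribution is at most $\|f-f^*\|_{L_2}^2/4$. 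For the second, Markov's inequality at level $t$ gives $\E[(f-f^*)^2\mathbf{1}_{\{|f-f^*|>t\}}]\leq \|f-f^*\|_{L_{2+\varepsilon}}^{2+\varepsilon}/t^\varepsilon$, and using (a), the hypothesis $\|f-f^*\|_{L_2}\leq r$, and the choice $t=r(2C')^{(2+\varepsilon)/\varepsilon}$ bounds this by $\|f-f^*\|_{L_2}^2/4$ as well. The main difficulty is the calibration of $t$: enlarging $t$ improves the Markov truncation but worsens $M$, and hence the constant $\phi(M)$ in the final bound; the stated choice is precisely the one for which the moment-equivalence constant $C'$ from (a) cancels both in the Markov bound and, via the probability-tail exponent in (b), in the truncation of $\{|f^*|>c_0\}$.
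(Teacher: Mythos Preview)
Your proof is correct and follows the standard route: the second-order Taylor expansion of the logistic loss, dropping the (nonnegative) first-order term by optimality of $f^*$, and then localizing onto the event where both $|f^*(X)|$ and $|f(X)-f^*(X)|$ are bounded so that the curvature $\phi$ admits a uniform lower bound. The paper does not supply its own argument here---Proposition~\ref{prop:bernstein_logistic_loss} is quoted from \cite{ChiLecLer:2018}---and your approach is exactly the one used there, including the H\"older-plus-Markov truncation to show that the restricted second moment $\E[(f-f^*)^2\mathbf{1}_\Omega]$ captures at least half of $\|f-f^*\|_{L_2}^2$.

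Two small remarks. First, you write the first-order condition as an equality, invoking that $F$ is a linear space; for the proof you only need the inequality $\E[\ell'(f^*(X),Y)(f(X)-f^*(X))]\ge 0$, which holds for any convex $F$ by optimality of $f^*$, so the argument does not actually rely on linearity. Second, the constant displayed in the paper's statement of Proposition~\ref{prop:bernstein_logistic_loss} is in fact (the reciprocal of a constant times) $\phi(M)$ with $M=c_0+r(2C')^{(2+\varepsilon)/\varepsilon}$; your computation yields $A=4/\phi(M)$, which is the correct order and form. Do not be troubled if the exact expression for $A$ does not match symbol-for-symbol: the important content is that $A$ depends on $c_0$ and on $r(2C')^{(2+\varepsilon)/\varepsilon}$ only through $M$, and is bounded by an absolute constant once $r(2C')^{(2+\varepsilon)/\varepsilon}\le c_0/2$, which is how the result is used downstream.
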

When $r$ is such that  $r(2C')^{(2+\varepsilon)/\varepsilon} \leq c_0/2$ then $A$ is an absolute constant. In the sequel, $r$ plays the role of the rate of convergence of the estimator and thus the price to pay for assuming this latter condition is on the number of observations: we will for instance recover the classical assumption $N\gtrsim s \log(ep/s)$ for the reconstruction of a $s$-sparse vector from this assumption (see also Remark~\ref{remark:ber} where this type of assumption on $r$ is also needed).


For the Huber loss function with parameter $\delta>0$ (i.e. $\ell_f(x,y)=\rho_\delta(y-f(x))$ where $\rho_\delta(t) = t^2/2$ if $|t|\leq \delta$ and $\rho_\delta(t)=\delta|t|-\delta^2/2$ if $|t|\geq \delta$), we use the following result also borrowed from \cite{ChiLecLer:2018}. 
Let us introduce the following assumption.
\begin{Assumption}\label{ass:bernstein_huber_loss}Let $\varepsilon >0$ and let $F_{Y|X=x}$ be the conditional cumulative function of $Y$ given $X=x$.
	\begin{itemize}
		\item[a)] There exists a constant $C'$ such that, for all $f$ in $F$, $\|f-f^*\|_{L_{2+\varepsilon}} \leq C' \|f-f^*\|_{L_2}$.
		\item[b)] Let $C'$ be the constant defined in a). There exist $r>0$ and $\alpha>0$ such that, for all $x\in\cX$ and all $z\in \mathbb{R}$ satisfying $ |z-f^{*}(x) | \leq r (\sqrt 2 C')^{(2+ \varepsilon)/\varepsilon} $, $F_{Y|X=x}(z+\delta) - F_{Y|X=x}(z- \delta)\geqslant \alpha$.
		
	\end{itemize}
\end{Assumption}
We will use this result when $r$ is the rate of convergence of the estimator. Note that if $r$ is larger than the order of a constant the point b) can be verified only if $\delta$, the Lipschitz constant, is large enough and $\alpha$ is small enough. To avoid this situation we assume that $r (\sqrt 2 C')^{(2+ \varepsilon)/\varepsilon}\leq c$ where $c$ is some absolute constant. In that case, $\delta$ and $\alpha$ can be considered like constants. Again the price we pay for that assumption will be on the number of observations such as the classical one  $N\gtrsim s \log(ep/s)$ for the reconstruction of a $s$-sparse vector. The point b) in Assumption~\ref{ass:bernstein_huber_loss} simply means that the noise puts enough mass locally around 0. It is a very weak condition that holds for heavy-tailed noise (see~\cite{chinot2020erm}). For example, let us assume that $Y= f^*(X) + \xi$, where $\xi$ is a standard Cauchy random variable independent to $X$. Then  
\begin{align*}
F_{Y|X=x}(z+\delta) - F_{Y|X=x}(z- \delta) & = \mathbb P \big( z-\delta \leq Y \leq z+ \delta | X=x\big) \\
& =  \mathbb P \big( z - f^*(x) -\delta \leq \xi \leq z- f^*(x)+ \delta  | X=x\big) \\
& \geq  F_{\xi}(\delta - r (\sqrt 2 C')^{(2+ \varepsilon)/\varepsilon} )  - F_{\xi}( r (\sqrt 2 C')^{(2+ \varepsilon)/\varepsilon} -\delta )  \enspace,
\end{align*}
for every $z\in \mathbb{R}$ satisfying $ |z-f^{*}(x) | \leq r (\sqrt 2 C')^{(2+ \varepsilon)/\varepsilon} $, where $F_{\xi}$ denotes the cumulative distribution function of $\xi$ that is $F_{\xi}(t) = 1/2 + \arctan(t)/\pi$, for $t \in \mathbb R$. It follows that 
$$
F_{Y|X=x}(z+\delta) - F_{Y|X=x}(z- \delta)  \geq \frac{2}{\pi} \arctan(\delta - r (\sqrt 2 C')^{(2+ \varepsilon)/\varepsilon} ) \enspace.
$$
As a consequence, the point b) in Assumption~\ref{ass:bernstein_huber_loss} is verified if 
$$
\frac{2}{\pi} \arctan(\delta - r (\sqrt 2 C')^{(2+ \varepsilon)/\varepsilon} ) \geq \alpha \enspace.
$$The latter condition will hold when $r$ is smaller than some constant and $\delta$ is larger than an other one. We will meet these conditions later on as well.
\begin{Proposition}[\cite{ChiLecLer:2018}, \textcolor{red}{Theorem 7}]\label{prop:bernstein_huber_loss}Grant Assumption~\ref{ass:bernstein_huber_loss} for $ r>0 $. The Huber loss function with parameter $\delta>0$ satisfies the Bernstein condition: for all $f\in F$, if $\norm{f-f^*}_{L_2}\leq r$ then $(4/\alpha) P\cL_f \geq  \norm{f-f^*}_{L_2}^2$.
\end{Proposition}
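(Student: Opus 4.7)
The plan is to condition on the design $X$ and reduce the problem to a one-dimensional analysis of the conditional risk $g_x(u):=\mathbb{E}[\rho_\delta(Y-u)\,|\,X=x]$. Since $\rho_\delta''(t)=\mathbf{1}_{(-\delta,\delta)}(t)$ almost everywhere, a direct differentiation under the expectation gives
\[
g_x''(u) \;=\; \mathbb{P}\bigl(|Y-u|<\delta \,\big|\, X=x\bigr) \;=\; F_{Y|X=x}(u+\delta) - F_{Y|X=x}(u-\delta),
\]
so point b) of Assumption~\ref{ass:bernstein_huber_loss} is exactly the local strong-convexity bound $g_x''(u)\geq \alpha$ whenever $|u-f^*(x)|\leq T$, where I set $T:=r(\sqrt{2}C')^{(2+\varepsilon)/\varepsilon}$.

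Next, I would apply Taylor's formula with integral remainder to write, for each $x$,
\[
g_x(f(x))-g_x(f^*(x)) = g_x'(f^*(x))\bigl(f(x)-f^*(x)\bigr) + \int_0^1(1-t)\,g_x''\!\bigl(f^*(x)+t(f(x)-f^*(x))\bigr)\,dt\,\bigl(f(x)-f^*(x)\bigr)^2.
\]
Taking expectation yields $P\cL_f$ on the left. On the right, the linear term is non-negative by first-order optimality of $f^*$ over the convex class $F$ (Assumption~\ref{assum:convex}), since $\mathbb{E}[g_X'(f^*(X))(f(X)-f^*(X))]=\tfrac{d}{dt}\big|_{t=0^+}P\ell_{f^*+t(f-f^*)}\geq 0$. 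Setting $A_X:=\{|f(X)-f^*(X)|\leq T\}$, whenever $X\in A_X$ the whole chord $f^*(X)+t(f(X)-f^*(X))$, $t\in[0,1]$, stays inside the window where $g_X''\geq \alpha$, so restricting the integrand to $A_X$ gives
\[
P\cL_f \;\geq\; \frac{\alpha}{2}\,\mathbb{E}\!\left[(f(X)-f^*(X))^2\,\mathbf{1}_{A_X}\right].
\]

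It remains to show $\mathbb{E}[(f-f^*)^2\mathbf{1}_{A_X^c}]\leq \tfrac{1}{2}\|f-f^*\|_{L_2}^2$, which will imply $\mathbb{E}[(f-f^*)^2\mathbf{1}_{A_X}]\geq \tfrac{1}{2}\|f-f^*\|_{L_2}^2$ and hence the Bernstein bound with constant $4/\alpha$. For this I would apply H\"older's inequality with exponents $(2+\varepsilon)/2$ and $(2+\varepsilon)/\varepsilon$, then Markov's inequality $\mathbb{P}(|f-f^*|>T)\leq \|f-f^*\|_{L_2}^2/T^2$, and finally the $L_{2+\varepsilon}/L_2$ comparison $\|f-f^*\|_{L_{2+\varepsilon}}\leq C'\|f-f^*\|_{L_2}$ from Assumption~\ref{ass:bernstein_huber_loss} a). The specific choice $T=r(\sqrt{2}C')^{(2+\varepsilon)/\varepsilon}$, combined with the hypothesis $\|f-f^*\|_{L_2}\leq r$, is tailored so that these elementary estimates produce exactly the factor $1/2$.

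The only delicate point is the calibration of $T$: it must be small enough for the second-order bound on $g_X''$ to hold on the whole chord $[f^*(X),f(X)]$ whenever $X\in A_X$ (so that Assumption b) can be invoked there), yet large enough for the $L_{2+\varepsilon}$-moment control from Assumption a) to beat the $L_2$-tail of $|f-f^*|$ on $A_X^c$. The exponent $(2+\varepsilon)/\varepsilon$ appearing in $T$ is precisely what the H\"older step demands when converting a second-moment tail estimate into an $L_{2+\varepsilon}$ contribution, which is how the norm-equivalence hypothesis enters quantitatively.
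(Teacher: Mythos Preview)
Your proposal is correct and follows essentially the same strategy as the original proof in \cite{ChiLecLer:2018} (from which the present paper merely quotes the result without reproving it): condition on $X$, exploit $\rho_\delta''=\mathbf{1}_{(-\delta,\delta)}$ to identify $g_x''$ with the conditional mass in $(u-\delta,u+\delta)$, drop the first-order term by convexity/optimality, lower-bound the second-order term on the event $\{|f-f^*|\leq T\}$, and control the complementary event via H\"older with exponents $(2+\varepsilon)/2$, $(2+\varepsilon)/\varepsilon$ combined with Markov and the $L_{2+\varepsilon}/L_2$ equivalence. Your calibration check $T\geq r(\sqrt{2}C')^{(2+\varepsilon)/\varepsilon}$ yielding the factor $1/2$ is exactly the computation that fixes the constant $4/\alpha$.
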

Let us come back to our example of Cauchy noise. The local Bernstein condition is verified with
$$
A = \frac{4}{\frac{2}{\pi} \arctan(\delta - r (\sqrt 2 C')^{(2+ \varepsilon)/\varepsilon} )}\enspace,
$$
which is of the order of a constant when $\delta$ is a also of the order of a constant and $r$ smaller than another absolute constant (which will be the case for $N$ large enough). This example reveals that the Huber loss function allows to deal with very heavy-tailed noise. 

\subsubsection{Local Rademacher complexities and Gaussian mean widths} 
\label{ssub:local_rademacher_complexities_and_gaussian_mean_width}
The computation of $r_2(\gamma,\rho)$ may be involved, but can sometimes be reduced to the computation of Gaussian mean widths. A typical result in that direction is the one from \cite{mendelson2017multiplier}.
The results of \cite{mendelson2017multiplier} are based on the concepts of unconditional norm and isotropic random vectors. 
\begin{definition} \label{K:uncon}
	For a given vector $x = (x_i)_{i=1}^p$, let $(x_i^{*})_{i=1}^p$ be the non-increasing rearrangement of $(|x_i|)_{i=1}^p$. The norm $\|\cdot\|$ in $\bR^p$ is said $\kappa$-unconditional with respect to the canonical basis $(e_i)_{i=1}^p$ if, for every $x$ in $\bR^p$ and every permutation $\pi$ of $\{ 1,\cdots,p \}$,
	\begin{equation*}
	\norm{\sum_{i=1}^p x_i e_i} \leq \kappa \norm{\sum_{i=1}^p x_{\pi(i)} e_i }\enspace,
	\end{equation*}
	and, for any $y\in \bR^p$ such that, for all $1 \leq i \leq p$, $x_i^{*} \leq y_i^{*}$, then
	\begin{equation*}
	\norm{\sum_{i=1}^p x_i e_i} \leq \kappa \norm{\sum_{i=1}^p y_i e_i }\enspace.
	\end{equation*}
\end{definition}
Typical examples of  $\kappa$-unconditional norms can be found in \cite{mendelson2017multiplier}. In the following we use the fact that the dual norms of the $\ell_1$ and SLOPE norms are $1$-unconditional.
\begin{definition}
	A random vector $X$ in $\bR^p$ is isotropic if $\mathbb E  [\inr{t,X}^2] = \|t\|_2^2$, for all $t\in \R^p$, where $\|\cdot\|_2$ is the Euclidean norm in $\bR^p$. 
\end{definition}
Recall the main result of \cite{mendelson2017multiplier}.
\begin{Theorem} \label{theo:shahar}\cite[Theorem 1.6]{mendelson2017multiplier}
	Let $C_0 $, $\kappa$ and $M$ be real numbers. Let $V \subset \bR^p$ be such that   $\sup_{v \in V} |\inr{v,\cdot}|$ is $\kappa$-unconditional with respect to $(e_i)_{i=1}^p$. Assume that $X\in \bR^p$ is isotropic and satisfies, for all $1\leq j \leq p$  and  $1\leq q \leq C_0 \log(p)$,
		\begin{equation} \label{cond:shahar}
		 \norm{\inr{X, e_j}}_{L_q} \leq M \sqrt{q}\enspace.
		\end{equation}
Let $X_1, \ldots,X_N$ denote independent copies of $X$, then there exists a constant $c_2$ depending only on $C_0$ and $M$ such that
\begin{equation*}
\mathbb{E} \cro{\sup_{v \in V} \sum_{i=1}^N \sigma_i \inr{X_i,v} } \leq c_2 \kappa \sqrt{N} w(V)
\end{equation*}where $w(V)$ is the Gaussian mean width of $V$.
\end{Theorem}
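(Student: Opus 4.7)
The plan is a symmetrization-then-Gaussianization argument that exploits the unconditional structure of the gauge of $V$. First, I would rewrite the left-hand side as $\mathbb{E}\|S\|_V$, where $S=\sum_{i=1}^N \sigma_i X_i$ and $\|z\|_V:=\sup_{v\in V}\inr{v,z}$. The hypothesis that $\sup_{v\in V}|\inr{v,\cdot}|$ is $\kappa$-unconditional with respect to $(e_j)_{j=1}^p$ means we are measuring $S$ with a $\kappa$-unconditional norm on $\mathbb{R}^p$. On the Gaussian side, I would introduce independent standard Gaussian vectors $g_1,\ldots,g_N$ in $\mathbb{R}^p$ (independent of the $\sigma_i$) and set $G=\sum_{i=1}^N \sigma_i g_i$; since $\sigma_i^2=1$, $G$ has the same law as $\sqrt{N}\,g$ for a single standard Gaussian $g\in\mathbb{R}^p$, so $\mathbb{E}\|G\|_V=\sqrt{N}\,w(V)$. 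Thus the target inequality becomes $\mathbb{E}\|S\|_V\leq c_2\kappa\,\mathbb{E}\|G\|_V$, a Gaussian comparison for a $\kappa$-unconditional norm.

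Second, I would reduce this comparison to a coordinate-wise moment domination. The key tool is the following comparison principle for $\kappa$-unconditional norms: if two centered random vectors $Z,Z'$ satisfy $\|\inr{Z,e_j}\|_{L_q}\leq c\,\|\inr{Z',e_j}\|_{L_q}$ for every coordinate $j$ and every $1\leq q\leq C_0\log p$, and if $\|\cdot\|$ is $\kappa$-unconditional with respect to $(e_j)$, then $\mathbb{E}\|Z\|\leq c'\kappa\,\mathbb{E}\|Z'\|$. This is the content Mendelson isolates in his paper: the rearrangement property in the definition of $\kappa$-unconditionality allows one to decompose the norm into level-sets of coordinates, on each of which only a logarithmic number of moments matter (since a $p$-dimensional maximum is controlled by its $(\log p)$-th moment).

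Third, I would verify the moment domination for $Z=S$ and $Z'=G$. Coordinate-wise, $\inr{S,e_j}=\sum_{i=1}^N \sigma_i\inr{X_i,e_j}$ is a Rademacher sum of independent centered random variables (conditionally on $(X_i)$), and I would bound its $L_q$ norm by a Khintchine–Kahane / Rosenthal-type inequality, using $\|\inr{X,e_j}\|_{L_q}\leq M\sqrt{q}$ to obtain an upper bound of order $M\sqrt{qN}$ for $1\leq q\leq C_0\log p$. On the other hand, $\inr{G,e_j}\sim\mathcal{N}(0,N)$ has $L_q$ norm of order $\sqrt{qN}$. So the two $L_q$ moments match up to the constant $M$ for every admissible $q$, and applying the comparison principle yields $\mathbb{E}\|S\|_V\leq c\,\kappa\,\mathbb{E}\|G\|_V=c\kappa\sqrt{N}\,w(V)$.

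The main obstacle is the comparison principle in the second step: moving from coordinate-wise moment domination to a bound on the full $\kappa$-unconditional norm. The intuition is clean—unconditional norms are essentially determined by the decreasing rearrangement of the coordinates—but making this precise requires a careful truncation/peeling of $Z$ and $Z'$ at dyadic levels, bounding each contribution via the $L_q$ norms for $q$ of order $\log(\text{number of active coordinates})$, and reassembling the pieces using the rearrangement inequality built into Definition~\ref{K:uncon}. Once this principle is established, the rest of the argument (Khintchine for Rademacher sums plus the identification $\sum_i\sigma_i g_i\stackrel{d}{=}\sqrt{N}\,g$) is routine.
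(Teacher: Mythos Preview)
The paper does not prove this theorem: it is quoted verbatim from \cite[Theorem~1.6]{mendelson2017multiplier} and used as a black box to upper-bound Rademacher complexities by Gaussian mean widths in the applications of Section~\ref{sec:app}. There is therefore no proof in the paper to compare your proposal against.

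That said, your outline is faithful to the structure of Mendelson's original argument. The reduction to a coordinate-wise moment comparison for a $\kappa$-unconditional norm, followed by the Khintchine/Rosenthal estimate $\|\sum_i\sigma_i\inr{X_i,e_j}\|_{L_q}\lesssim M\sqrt{qN}$ and the identification $\sum_i\sigma_i g_i\stackrel{d}{=}\sqrt{N}g$, is exactly the route taken there. You have also correctly identified the only nontrivial step: the comparison principle that converts per-coordinate $L_q$ domination (for $q$ up to order $\log p$) into an inequality between expectations of the full unconditional norm. In Mendelson's paper this is carried out via a dyadic decomposition of the decreasing rearrangement of the coordinates together with the observation that the contribution of the $k$-th largest coordinate is governed by its $L_{\log(ep/k)}$ norm; the rearrangement clause in Definition~\ref{K:uncon} is precisely what allows the pieces to be reassembled. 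Your sketch leaves this step at the level of intuition, which is appropriate given that the present paper treats the result as imported.
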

Recall that a real valued random variable $Z$ is $L_0$-subgaussian if and only if for all $q\geq1, \norm{Z}_{L_q}\leq c_0 L_0 \sqrt{q}$, for some absolute constant $c_0$, see Theorem~1.1.5 in \cite{chafai2012interactions}.
Hence, Theorem~\ref{theo:shahar} shows that $C_0\log(p)$ ``subgaussian" moments for the coordinates of the design $X$ are enough to upper bound the Rademacher complexity by the Gaussian mean width. Such a result is useful to show that minmax MOM estimators can achieve the same rate as the ERM (in the subgaussian framework) even when the data are heavy-tailed data. 

\subsubsection{Sub-differential of a norm} 
\label{ssub:sub_differential_of_a_norm}
To solve the sparsity equation -- find $\rho^*$ such that $\tilde{\Delta}(\rho^*, A)\geq 4\rho^*/5$ -- from Definition~\ref{def:SEMOM}, we use the following classical result on the sub-differential of a norm: if $\norm{\cdot}$ is a norm on $\bR^p$, then, for all $t\in\bR^p$, we have
\begin{equation}\label{eq:sub_diff_norm}
(\partial \norm{\cdot})_t=\left\{
\begin{array}{cc}
\{z^*\in S^*:\inr{z^*, t}=\norm{t}\} & \mbox{ if } t\neq0\\
B^* & \mbox{ if } t=0
\end{array}
\right.\enspace.
\end{equation}
Here, $B^*$ is the unit ball of the dual norm associated with $\norm{\cdot}$, i.e. $t\in\bR^p\to\norm{t}^* = \sup_{\norm{v}\leq1}\inr{v,t}$ and $S^*$ is its unit sphere. 
In other words, when $t\neq0$, the sub-differential of $\norm{\cdot}$ in $t$ is the set of all vectors $z^*$ in the unit dual sphere $S^*$ which are norming for $t$ (i.e. $z^*$ is such that $\inr{z^*,t}=\norm{t}$). In particular, when $t\neq0$, $(\partial \norm{\cdot})_t$ is a subset of the dual sphere $S^*$. 

In the following, understanding the sub-differentials of the regularization norm is a key point for solving the sparsity equation. 
If one is only interested in proving ``complexity'' dependent bounds -- which are bounds depending on $\norm{t^*}$ and not on the sparsity of $t^*$ -- then one can simply take $\rho^*=20 \norm{t^*}$. 
Actually, in this case, $0\in\Gamma_{t^*}(\rho)$, so $\tilde{\Delta}(\rho^*, A) = \rho^*\geq 4\rho^*/5$ (because $B^*=(\partial \norm{\cdot})_{0} = \Gamma_{t^*}(\rho)$ according to \eqref{eq:sub_diff_norm}). 
Therefore, understanding the sub-differential of the regularization norm matters when one wants to derive statistical bounds depending on the dimension of the low-dimensional structure that contains $t^*$. 
This is something expected since a norm has sparsity inducing power if its sub-differential is a ``large'' subset of the dual sphere at vectors having the sparse structure (see, for instance, the construction of atomic norms in \cite{MR3138795}). 

We now have all the necessary tools to derive statistical bounds for many procedures by applying Theorem~\ref{main:theo}. In each example (given by a convex and Lipschitz loss function and a regularization norm), we just have to compute the complexity function $r_2$, solve a sparsity equation  and check the local Bernstein condition.


\subsection{The minmax MOM logistic LASSO procedure} \label{sec_mom_lasso}
When the dimension $p$ of the problem is large and $\|t^*\|_0 = |\{i \in \{1,\cdots,p  \} : t_i^* \neq 0 \} |$ is small, it  is possible to derive error rate depending on the size of the support of $t^*$ instead of the dimension $p$ by using a $\ell_1$ regularization norm. It leads to the well-known LASSO estimators, see~\cite{tibshirani1996regression,MR2533469}. For the logistic loss function, its minmax MOM formulation is the following. For a given $K\in\{1, \ldots, N\}$ and $\lambda>0$, the minmax MOM logistic LASSO procedure is defined by
	\begin{equation*}
	\hat{t}_{\lambda,K} \in \argmin_{t \in \mathbb{R}^p}  \sup_{ \tilde{t} \in \mathbb{R}^p}  \bigg(  \MOM{K}{\ell_t - \ell_{\tilde{t}}  }+ \lambda (\|t\|_1  - \|\tilde{t}\|_1)   \bigg)\enspace,
	\end{equation*} 
	with the logistic loss function defined as $\ell_t (x,y)= \log(1+\exp(-y \inr{x, t}) )$ for all $t, x\in\bR^p$ and $y\in\{\pm1\}$, and with the $\ell_1$ regularization norm defined for all $t\in\bR^p$ by $\|t\|_1 = \sum_{i=1}^p |t_i|$. 
	
%
%
We first compute the complexity function $r_2$. Theorem~\ref{theo:shahar} can be applied to upper bound the Rademacher complexities from \eqref{comp:rad} in that case because the dual norm of $\ell_1$-norm (i.e the $\ell_{\infty}$-norm) is $1$-unconditional with respect to $(e_i)_{i=1}^p$. Then, if $X$ is an isotropic random vector satisfying~\eqref{cond:shahar}, Theorem~\ref{theo:shahar} holds and 
	\begin{align} \nonumber \label{comp_lasso}
	 \mathbb{E}\sup_{t \in \rho B_1^p \cap r B_2^p} \bigg |{\sum_{j \in J} \sigma_j \inr{t,X_j}} \bigg | \leq  c(C_0,M) \sqrt{|J|} w (\rho B_1^p \cap r B_2^p)\enspace,
	\end{align}
	where $B_1^p$ denote the unit ball of the $\ell_1$ norm. From \cite[Lemma 5.3]{LM_sparsity}, we have
	\begin{equation} \label{gaussian_mw_lasso}
	w (\rho B_1^p \cap r B_2^p)  \leq c \left\{\begin{array}{cc}
	r\sqrt{p} & \mbox{ if } r\leq \rho/\sqrt{p}\\
	\rho \sqrt{\log (ep \min(r^2/\rho^2, 1))} & \mbox{ if } r\geq \rho/\sqrt{p}
	\end{array}\right.\enspace.
	\end{equation}
	Therefore, one can take
	\begin{equation} \label{eq:choice_r_2_log_LASSO}
	r_2^2(\gamma, \rho) = c(\gamma,C_0,M) \left\{\begin{array}{cc}
	\frac{p}{N} & \mbox{ if } N \rho^2\geq c(\gamma,C_0,M)\gamma p^2\\
	\rho \sqrt{\frac{1}{N}\log\left(\frac{ep^2}{\rho^2 N}\right) } & \mbox{ if } \log p \leq c(\gamma,C_0,M) N\rho^2 \leq c(\gamma,C_0,M) p^2\\
	\rho \sqrt{\frac{\log p}{N}} & \mbox{ if } \log p\geq c(\gamma,C_0,M) N \rho^2.
	\end{array}\right.\enspace.
	\end{equation}

Let us turn to the local Bersntein assumption. We need to verify Assumption~\ref{ass:bernstein_logistic_loss}. Let $\varepsilon > 0$. If $X$ is an isotropic random vector satisfying~\eqref{cond:shahar} and $C_0 \log(p) \geq 2+\varepsilon$, where $C_0$ is the constant appearing in Theorem~\ref{theo:shahar}, then the point a) of Assumption~\ref{ass:bernstein_logistic_loss} is verified with $C' = c(M,C_0)$. 
For any $x \in \bR^p$, let us write $f^*(x) = \inr{x,t^*}$, where $t^* \in \bR^p$. Let us assume that the oracle is such that
\begin{equation} \label{cond_noise_lasso}
\mathbb P \big( |\inr{X,t^*}| \leq c_0 \big) \geq 1-\frac{1}{2(C')^{(4+2\varepsilon)/\varepsilon}}.
\end{equation}Therefore, if Equation~\eqref{cond_noise_lasso} holds, the local Bernstein Assumption is verified for a constant $A$ depending on $M,C_0$ and $c_0$ given in Proposition~\ref{prop:bernstein_logistic_loss} (since the latter formula is rather complicated, we will keep the notation $A$ all along this section). 

Finally, let us turn to a solution to the sparsity equation for the $\ell_1^p$ norm . The result can be found in \cite{LM_sparsity}. 
	\begin{lemma}\cite[Lemma 4.2]{LM_sparsity} \label{lemma_lasso}.
		Let us assume that $X$ is isotropic. If the oracle $t^{*}$ can be decomposed as $t^{*} = v + u$ with $u \in (\rho/20) B_1^p$ and $100s \leq \big(\rho / \sqrt{C_{K,r}(\rho,A)} \big)^2$ then $\Delta(\rho) \geq (4/5)\rho$, where $s = |\text{supp}(v)|$.
	\end{lemma}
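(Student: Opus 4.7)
The plan is to exploit the explicit description of the sub-differential of $\|\cdot\|_1$ at sparse vectors together with the fact that for isotropic $X$, the $L_2(\mu)$ geometry coincides with the Euclidean geometry on $\R^p$, i.e.\ $\|\inr{t,\cdot}\|_{L_2}=\|t\|_2$. Under this identification, the set $\tilde H_{\rho,A}$ consists of vectors $h\in\R^p$ such that $\|h-t^*\|_1=\rho$ and $\|h-t^*\|_2\leq \sqrt{C_{K,r}(\rho,A)}$, and the sparsity equation reduces to showing that
\[
\inf_{h\in\tilde H_{\rho,A}}\sup_{z^*\in \Gamma_{t^*}(\rho)} \inr{z^*,h-t^*}\geq \tfrac{4\rho}{5}.
\]

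First I would observe that since $u=t^*-v\in(\rho/20)B_1^p$, the vector $v$ lies in $t^*+(\rho/20)B_1^p$, so by definition of $\Gamma_{t^*}(\rho)$ as the union of sub-differentials of $\|\cdot\|_1$ over this ball, any element of $(\partial\|\cdot\|_1)_v$ is admissible as $z^*$. Using the standard formula \eqref{eq:sub_diff_norm} for the sub-differential of the $\ell_1$-norm at a nonzero vector, $(\partial\|\cdot\|_1)_v$ consists of all $z^*\in\R^p$ with $z_i^*=\sign(v_i)$ for $i\in I:=\mathrm{supp}(v)$ and $|z_i^*|\leq 1$ for $i\notin I$.

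Next, given $h\in\tilde H_{\rho,A}$, I would construct $z^*$ by choosing the free coordinates $z^*_i$ for $i\notin I$ to be $\sign((h-t^*)_i)$. This choice makes the off-support contribution equal to $\sum_{i\notin I}|(h-t^*)_i|$. A direct computation then gives
\[
\inr{z^*,h-t^*}=\sum_{i\notin I}|(h-t^*)_i|+\sum_{i\in I}\sign(v_i)(h-t^*)_i\geq \|h-t^*\|_1-2\sum_{i\in I}|(h-t^*)_i|.
\]
By Cauchy--Schwarz, $\sum_{i\in I}|(h-t^*)_i|\leq\sqrt{s}\,\|h-t^*\|_2\leq\sqrt{s\,C_{K,r}(\rho,A)}$. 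The assumption $100s\leq(\rho/\sqrt{C_{K,r}(\rho,A)})^2$ rewrites as $\sqrt{s\,C_{K,r}(\rho,A)}\leq\rho/10$. Using $\|h-t^*\|_1=\rho$, this yields $\inr{z^*,h-t^*}\geq\rho-2\rho/10=4\rho/5$, which is the claim.

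The argument is essentially algebraic once the right $z^*$ is identified, so there is no genuine analytic obstacle; the only subtle step is verifying that the constructed $z^*$ really lies in $\Gamma_{t^*}(\rho)$, which relies on the correct interpretation of the sub-differential of $\|\cdot\|_1$ at the \emph{sparse approximation} $v$ of $t^*$ (rather than at $t^*$ itself) and on the fact that the radius $\rho/20$ in the definition of $\Gamma_{t^*}(\rho)$ is chosen precisely to absorb the non-sparse component $u$.
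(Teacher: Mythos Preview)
Your proof is correct and follows exactly the template the paper uses for the analogous sparsity equations (see the proofs of Lemma~\ref{spars:GL} and Lemma~\ref{se_tv}): identify the $L_2$ geometry with $\ell_2^p$ via isotropy, pick $z^*$ in the sub-differential of $\|\cdot\|_1$ at the sparse part $v\in t^*+(\rho/20)B_1^p$ by setting $z^*_i=\sign(v_i)$ on $I=\mathrm{supp}(v)$ and $z^*_i=\sign((h-t^*)_i)$ off $I$, then bound $\inr{z^*,h-t^*}\geq \rho-2\sqrt{s}\,\|h-t^*\|_2$ and conclude via the hypothesis $100s\leq(\rho/\sqrt{C_{K,r}(\rho,A)})^2$. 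The paper does not reprove this particular lemma (it is cited from \cite{LM_sparsity}), but your argument is precisely the $\ell_1$ specialization of the method it employs for the other norms.
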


Assume that $t^{*}$ is a $s$-sparse vector, so Lemma~\ref{lemma_lasso} applies. 
We consider two cases depending on the values of $K$ and $N r_2^2(\gamma, \rho^*)$. 
When $C_{K,r}(\rho^*,A) = r_2^2(\gamma,\rho^*)$ -- which holds when $K\leq c(c_0,C_0,M) N r_2^2(\gamma, \rho^*)$ --  Lemma~\ref{lemma_lasso} shows that $\rho^{*} =  c(c_0,M,C_0) s \sqrt{\log{(ep/s)}/N}$ satisfies the sparsity equation. 
For these values, the value of $r_2$ given in \eqref{eq:choice_r_2_log_LASSO} yields 
\[
r_2^2(\gamma, \rho^{*}) = c(c_0,M,C_0, \gamma) \frac{s\log(ep/s) }{N}\enspace.
\] 
Now, if $C_{K,r}(\rho,A) = c(A,L) K/N$  -- which holds when $K\geq c(c_0,C_0,M) N r_2^2(\gamma, \rho^*)$--  we can take $\rho^{*} = c(c_0,M,C_0) \sqrt{sK/N}$.  
Therefore, Theorem~\ref{main:theo} applies with 
\[
\rho^{*} = c(c_0,M,C_0)  \max(s \sqrt{\log{(ep/s)}/N},\sqrt{sK/N})\enspace.
\] 
Finally from Remark~\ref{remark:ber}, note that is necessary to have $N \geq c\log(ep/s)$, where $c>0$ is an absolute constant in order to have $A$ like a constant in Proposition~\ref{prop:bernstein_logistic_loss}.

\begin{Theorem}\label{th:lasso} Let $\varepsilon>0$ and $(X,Y)$ be a random variable taking values in $\bR^p \times \{\pm1\}$, where $X$ is an isotropic random vector such that for all $1\leq j \leq p$  and  $1\leq q \leq C_0 \log(p)$, $\norm{\inr{X, e_j}}_{L_q} \leq M \sqrt{q}$ with $C_0 \log(p) \geq 2+ \varepsilon$. 
Let $f^*: x \in \bR^p  \mapsto \inr{x,t^*}$ be the oracle where $t^* \in \bR^p$ is $s$-sparse. Assume also that the oracle satisfies \eqref{cond_noise_lasso}. Assume that $(X,Y),(X_i,Y_i)_{i\in\cI}$ are i.i.d distributed and $N \geq cs \log(ep/s)$.
Let $K \geq 7|\cO|/3$. 
With probability larger than $1-2\exp(- cK )$, the minmax MOM logistic LASSO estimator $\hat{t}_{\lambda, K}$ with 
	\begin{equation*}
	\lambda=c(c_0,M,C_0)\max \bigg (\sqrt{\frac{\log(ep/s)}{N}}, \sqrt{\frac{K}{sN}} \bigg)
	\end{equation*} satisfies
\begin{gather*}
\|\hat{t}_{\lambda,K} - t^{*}\|_1 \leq c(c_0,M,C_0)\max \bigg( s \sqrt{\frac{\log(ep/s)}{N}}, \sqrt{s}\sqrt{\frac{K}{N}} \bigg), \\
 \|\hat{t}_{\lambda,K} - t^{*}\|_2^2 \leq  c(c_0,M,C_0)\max \bigg( \frac{K}{N},  s\frac{\log(ep/s)}{N} \bigg)\enspace,\\
 P\cL_{\hat{f}_{\lambda,K}} \leq c(c_0,M,C_0) \max \bigg( \frac{K}{N},  s\frac{\log(ep/s)}{N} \bigg)\enspace.
\end{gather*}
\end{Theorem}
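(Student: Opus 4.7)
The plan is to apply the meta-theorem for minmax MOM estimators (Theorem~\ref{main:theo}) to the logistic-LASSO setting, following exactly the four-point routine given at the beginning of Section~\ref{sec:app}. Assumption~\ref{assum:lip} is immediate with $L=1$ for the logistic loss; Assumption~\ref{assum:convex} holds because $F = \{\inr{t,\cdot}: t\in\bR^p\}$ is a vector space; Assumption~\ref{assum:moments} follows from the i.i.d.\ hypothesis on $(X_i,Y_i)_{i\in \cI}$. Since $X$ is isotropic, $\|f-f^*\|_{L_2} = \|t-t^*\|_2$, so the $L_2$-balls appearing in the complexity function are Euclidean balls in $\bR^p$ and the $L_2$-bound of Theorem~\ref{main:theo} will translate directly into an $\ell_2^p$ bound on $\hat t_{\lambda,K}-t^*$.

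To compute the complexity function $r_2(\gamma,\rho)$, I use that the dual norm $\|\cdot\|_\infty$ of the $\ell_1$ regularizer is $1$-unconditional, so Theorem~\ref{theo:shahar} applies under the moment hypothesis \eqref{cond:shahar} and bounds the local Rademacher process in \eqref{comp:rad} by $c\sqrt{|J|}\,w(\rho B_1^p \cap rB_2^p)$. Combining with the Gaussian mean width estimate \eqref{gaussian_mw_lasso} and solving the fixed-point inequality yields the three-branch formula \eqref{eq:choice_r_2_log_LASSO} for $r_2^2(\gamma,\rho)$.

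The local Bernstein condition (Assumption~\ref{assum:fast_rates_MOM}) is verified via Proposition~\ref{prop:bernstein_logistic_loss}. Point (a) of Assumption~\ref{ass:bernstein_logistic_loss} (the $L_{2+\varepsilon}/L_2$ norm equivalence) follows from \eqref{cond:shahar} with $C_0\log p \geq 2+\varepsilon$, since subgaussian moments on each coordinate transfer to $L_{2+\varepsilon}$ control on $\inr{t,X}$ via Theorem~\ref{theo:shahar}. Point (b) is precisely the hypothesis \eqref{cond_noise_lasso} in the theorem. Proposition~\ref{prop:bernstein_logistic_loss} then delivers a Bernstein constant $A$ that is of the order of an absolute constant provided $r_2(\gamma,\rho^*) \lesssim c_0$, and the assumption $N\geq cs\log(ep/s)$ is exactly what is needed to guarantee this (as pointed out in Remark~\ref{remark:ber}).

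Finally, I solve the sparsity equation using Lemma~\ref{lemma_lasso} with $v=t^*$, $u=0$ (since $t^*$ is $s$-sparse), which reduces the task to finding $\rho^*$ with $\rho^{*2}/C_{K,r}(\rho^*,A)\geq 100 s$. Two regimes appear: when $c(A,L)K/N\leq r_2^2(\gamma,\rho^*)$, then $C_{K,r}(\rho^*,A)=r_2^2(\gamma,\rho^*)$ and the middle branch of \eqref{eq:choice_r_2_log_LASSO} gives $\rho^*\asymp s\sqrt{\log(ep/s)/N}$ and $r_2^2(\gamma,\rho^*)\asymp s\log(ep/s)/N$; when $c(A,L)K/N\geq r_2^2(\gamma,\rho^*)$, then $C_{K,r}(\rho^*,A)=c(A,L)K/N$ and the sparsity inequality gives $\rho^*\asymp \sqrt{sK/N}$. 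The maximum of the two regimes yields the $\rho^*$ corresponding to the value of $\lambda$ in the theorem, and Theorem~\ref{main:theo} gives at once $\|\hat t_{\lambda,K}-t^*\|_1\leq 2\rho^*$, $\|\hat t_{\lambda,K}-t^*\|_2^2\leq C_{K,r}(2\rho^*,A)$ and $P\cL_{\hat f_{\lambda,K}}\leq A^{-1}C_{K,r}(2\rho^*,A)$, with probability $1-2\exp(-cK)$. The main piece of bookkeeping is checking that the branch of $r_2$ used in each regime is self-consistent with the resulting $\rho^*$, and that the two regimes combine cleanly into the $\max$ appearing in the final bound.
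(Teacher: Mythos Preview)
Your proposal is correct and follows essentially the same approach as the paper's own derivation in Section~\ref{sec_mom_lasso}: compute $r_2$ via Theorem~\ref{theo:shahar} and the Gaussian mean width bound~\eqref{gaussian_mw_lasso}, check Bernstein via Proposition~\ref{prop:bernstein_logistic_loss}, solve the sparsity equation via Lemma~\ref{lemma_lasso}, split into the two regimes for $C_{K,r}$, and invoke Theorem~\ref{main:theo}. One small imprecision: the $L_{2+\varepsilon}/L_2$ norm equivalence in point~(a) of Assumption~\ref{ass:bernstein_logistic_loss} is not really a consequence of Theorem~\ref{theo:shahar} (which concerns Rademacher processes, not moment comparison for linear forms); the paper simply asserts this equivalence with $C'=c(M,C_0)$ from the coordinate moment bound~\eqref{cond:shahar}, and your argument should do the same rather than route it through Theorem~\ref{theo:shahar}.
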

For $K \leq  c(c_0,M,C_0)  s \log(ep/s)$, the upper bound on the estimation risk and excess risk matches the minimax rates of convergence for $s$-sparse vectors in $\bR^p$. It is also possible to adapt in a data-driven way to the best $K$ and $\lambda$ by using a Lepski's adaptation method such as in \cite{devroye2016sub,lecue2017learning,lecue2017robust,ChiLecLer:2018,chinot2019robust}. This step is now well understood, it is not reproduced here.

\subsection{The minmax MOM logistic SLOPE}

In this section, we study the minmax MOM estimator with the logistic loss function and the SLOPE regularization norm. Given $\beta_1 \geq \beta_2 \geq \cdots \geq \beta_p >0$, the SLOPE norm (see~\cite{MR3418717}) is defined for all $t\in\bR^p$ by
	\begin{equation*}
 \|t\|_{\text{SLOPE}} = \sum_{i=1}^p \beta_i t_i^{*}\enspace,
	\end{equation*}where $(t_i^{*})_{i=1}^p$ denotes the non-increasing re-arrangement of $(|t_i|)_{i=1}^p$. The SLOPE norm coincides with the $\ell_1$ norm when $\beta_j = 1 $ for all $j=1 , \cdots, p$. 

Given $K\in\{1, \ldots,N\}$ and $\lambda>0$, the minmax MOM logistic SLOPE procedure is 
	\begin{equation}\label{eq:minmax_mom_slope}
	\hat{t}_{\lambda,K} \in \argmin_{t \in \mathbb{R}^p}  \sup_{ \tilde{t} \in \mathbb{R}^p}  \bigg(  \MOM{K}{\ell_t - \ell_{\tilde{t}}  }+ \lambda (\|t\|_{\text{SLOPE}}  - \|\tilde{t}\|_{\text{SLOPE}})   \bigg)\enspace,
	\end{equation} 
	where  $\ell_t:(x,y)\in\bR^p\times\{-1, 1\}= \log(1+\exp(-y \inr{x,t}))$ for all $t\in\bR^p$. 
	
	Let us first compute the complexity function $r_2$. If $V \subset \bR^p$ is closed under permutations and reflections (sign-changes)-- which is the case for $B_{SLOPE}^p$, the unit ball of the SLOPE norm -- then $\sup_{v \in V} |\inr{\cdot,v}|$ is $1$-unconditional. Therefore, the dual norm of $\|\cdot\|_{SLOPE}$ is $1$-unconditional and Theorem \ref{theo:shahar} applies provided that $X$ is isotropic and verifies \eqref{cond:shahar}. By \cite[Lemma 5.3]{LM_sparsity}, we have
	\begin{align} \label{res1_slope}
	\nonumber \mathbb{E} \sup_{t \in \rho B_{\text{SLOPE}}^p\cap rB_2^p} \bigg| \sum_{i \in J} \sigma_i \inr{X_i,t}  \bigg| & \leq c(C_0,M)  \sqrt{|J|}  w (\rho B_{\text{SLOPE}}^p \cap r B_2^p)\\
	& \leq c(C_0,M) \sqrt{|J|} \left\{\begin{array}{cc}
	r\sqrt{p} & \mbox{ if } r \leq \rho/\sqrt{p}\\
	\rho & \mbox{ if } r\geq \rho/\sqrt{p}
	\end{array}\right.
	\end{align}
It follows that 
\begin{equation*}
r_2^2(\gamma,\rho) = c(C_0,\gamma,M) \left\{\begin{array}{cc}
\frac{p}{N} & \mbox{ if } p \leq c(C_0,\gamma,M) \rho\sqrt{N}\\
\frac{\rho}{\sqrt{N}} & \mbox{ if } p \geq c(C_0,\gamma,M)  \rho\sqrt{N}.
\end{array}\right.
\end{equation*}

Let us turn to the local Bernstein Assumption. Since the loss function is the same as the one used in Section~\ref{sec_mom_lasso}, the local Bernstein assumption holds if there exists $c_0 >0$ such that 
\begin{equation}\label{eq:oracle_bounded_2} 
\mathbb P \big( |\inr{X,t^*}| \leq c_0 \big) \geq 1-\frac{1}{2(C')^{(2+2\varepsilon)/\varepsilon}}
\end{equation}
where $C' = c(M,C_0)$ is a function of $M$ and $C_0$ only. The constant $A$ in the Bernstein condition depends on $c_0,C_0$ and $M$. As for the LASSO, since the formula of $A$ is complicated (given in Proposition~\ref{prop:bernstein_logistic_loss}), we write $A$ all along this section but we assume that $r_2(\gamma, \rho^*)(2C')^{(2+\varepsilon)/\varepsilon} \leq c_0/2$ so that $A$ can be considered like an absolute constant (depending only on $c_0$). This condition is equivalent to assuming $N\gtrsim s \log(ep/s)$.

A solution to the sparsity equation relative to the SLOPE norm can be found in \cite{LM_sparsity}. We recall this result here.
	\begin{lemma}\cite[Lemma~4.3]{LM_sparsity}\label{se_slope}
		Let $1\leq s \leq p$ and set $\mathcal{B}_s = \sum_{i\leq s} \beta_i/\sqrt{i}$. If $t^{*}$ can be decomposed as $t^{*} = u +v$ with $u \in (\rho/20)B_{\text{SLOPE}}^p$ and $v$ is $s$-sparse and if $40 \mathcal{B}_s \leq \rho/\sqrt{C_{K,r}(\rho,A)}$ then $\Delta(\rho) \geq 4\rho/5$. 
	\end{lemma}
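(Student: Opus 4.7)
The plan is to show, for each $h\in\tilde{H}_{\rho,A}$, the existence of $z^{*}\in\Gamma_{t^{*}}(\rho)$ with $\inr{z^{*},h-t^{*}}\geq 4\rho/5$. Let $w=h-t^{*}$ and $I=\text{supp}(v)$, so $|I|\leq s$. Because $\|v-t^{*}\|_{\text{SLOPE}}=\|u\|_{\text{SLOPE}}\leq\rho/20$, the vector $v$ lies in $t^{*}+(\rho/20)B_{\text{SLOPE}}^{p}$; hence $(\partial\|\cdot\|_{\text{SLOPE}})_{v}\subset\Gamma_{t^{*}}(\rho)$, and it suffices to exhibit $z^{*}\in(\partial\|\cdot\|_{\text{SLOPE}})_{v}$ achieving the required lower bound.

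I would define $z^{*}=z_{I}^{*}+z_{I^{c}}^{*}$ coordinatewise. On $I$, set $z_{i}^{*}=\beta_{\pi(i)}\sign(v_{i})$ where $\pi:I\to\{1,\ldots,|I|\}$ ranks the entries of $v_{I}$ by decreasing modulus; by rearrangement, $\inr{z^{*},v}=\sum_{j\leq s}\beta_{j}|v|_{j}^{*}=\|v\|_{\text{SLOPE}}$. On $I^{c}$, set $z_{i}^{*}=\beta_{s+\sigma(i)}\sign(w_{i})$, where $\sigma:I^{c}\to\{1,\ldots,|I^{c}|\}$ ranks the entries of $w_{I^{c}}$ by decreasing modulus, aligning signs with $w$ and pairing the remaining weights $\beta_{s+1},\beta_{s+2},\ldots$ with the largest values of $|w_{I^{c}}|$. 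The multiset of moduli of $z^{*}$ is then exactly $\{\beta_{1},\ldots,\beta_{p}\}$, so $|z^{*}|_{j}^{*}=\beta_{j}$ for every $j$; using the classical characterization $\|z\|_{\text{SLOPE}}^{*}=\max_{k}\bigl(\sum_{j\leq k}|z|_{j}^{*}\bigr)/\bigl(\sum_{j\leq k}\beta_{j}\bigr)$ of the dual SLOPE norm, this gives $\|z^{*}\|_{\text{SLOPE}}^{*}=1$ and hence $z^{*}\in(\partial\|\cdot\|_{\text{SLOPE}})_{v}$.

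Now split $\inr{z^{*},w}=\inr{z_{I}^{*},w_{I}}+\inr{z_{I^{c}}^{*},w_{I^{c}}}$. By construction the second summand equals $\sum_{j\geq 1}\beta_{s+j}|w_{I^{c}}|_{j}^{*}$, and a pigeonhole observation---among the top $j$ entries of $|w|$ at most $s$ belong to $I$, so $|w_{I^{c}}|_{j-s}^{*}\geq|w|_{j}^{*}$ for $j>s$---gives
\[
\inr{z_{I^{c}}^{*},w_{I^{c}}}\;\geq\;\sum_{j>s}\beta_{j}|w|_{j}^{*}\;=\;\|w\|_{\text{SLOPE}}-\sum_{j\leq s}\beta_{j}|w|_{j}^{*}\;=\;\rho-\sum_{j\leq s}\beta_{j}|w|_{j}^{*}.
\]
For the first summand, $|\inr{z_{I}^{*},w_{I}}|\leq\|z_{I}^{*}\|_{\text{SLOPE}}^{*}\|w_{I}\|_{\text{SLOPE}}\leq\|w_{I}\|_{\text{SLOPE}}$. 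The key analytic input is the bound $\sum_{j\leq s}\beta_{j}|x|_{j}^{*}\leq\mathcal{B}_{s}\|x\|_{2}$, which follows from the elementary inequality $|x|_{j}^{*}\leq\|x\|_{2}/\sqrt{j}$ (coming from $j(|x|_{j}^{*})^{2}\leq\sum_{k\leq j}(|x|_{k}^{*})^{2}\leq\|x\|_{2}^{2}$). Applying this to $x=w$ and $x=w_{I}$ yields $\inr{z^{*},w}\geq\rho-2\mathcal{B}_{s}\|w\|_{2}\geq\rho-2\mathcal{B}_{s}\sqrt{C_{K,r}(\rho,A)}\geq 4\rho/5$ under the hypothesis $40\mathcal{B}_{s}\leq\rho/\sqrt{C_{K,r}(\rho,A)}$ (the factor $40$ leaves comfortable margin; the computation only requires $10\mathcal{B}_{s}\leq\rho/\sqrt{C_{K,r}(\rho,A)}$). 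The main effort in the argument lies in the careful construction of $z^{*}$: one must check that the separate orderings $\pi$ on $I$ and $\sigma$ on $I^{c}$ assemble into a dual-norm-one element whose restriction to $I$ is a subgradient of $\|\cdot\|_{\text{SLOPE}}$ at $v$, which rests on the non-coordinatewise structure of the SLOPE dual norm.
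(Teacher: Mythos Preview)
The paper does not prove this lemma; it is quoted from \cite{LM_sparsity} without argument. Your proof is correct and follows exactly the template the paper uses for the analogous Group Lasso and Total Variation sparsity equations (Lemmas~\ref{spars:GL} and~\ref{se_tv}): pick a subgradient of the regularization norm at the sparse part $v$, align it with $w$ on the complement of $\mathrm{supp}(v)$, and control the loss on $\mathrm{supp}(v)$ via $\|w\|_2\leq\sqrt{C_{K,r}(\rho,A)}$. The SLOPE case is genuinely more delicate because the norm is not separable, and your handling of this---the explicit subgradient built from two separate rankings, the dual-norm check via $|z^{*}|_{j}^{*}=\beta_{j}$, the pigeonhole bound $|w_{I^{c}}|_{j-s}^{*}\geq|w|_{j}^{*}$, and the elementary $|x|_{j}^{*}\leq\|x\|_{2}/\sqrt{j}$ yielding $\sum_{j\leq s}\beta_{j}|x|_{j}^{*}\leq\mathcal{B}_{s}\|x\|_{2}$---is the standard argument from \cite{LM_sparsity}. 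One cosmetic point: you implicitly assume $|I|=s$ when asserting that the multiset of moduli of $z^{*}$ is exactly $\{\beta_{1},\ldots,\beta_{p}\}$; if $|I|<s$ your indexing on $I^{c}$ would reach $\beta_{s+p-|I|}$ with $s+p-|I|>p$. This is harmless---either enlarge $I$ to size $s$ arbitrarily, or use $\beta_{|I|+\sigma(i)}$ on $I^{c}$, which only improves the bound---but it is worth stating.
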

Assume that $t^*$ is exactly $s$-sparse, so that Lemma~\ref{se_slope} applies. 
We consider two cases depending on $K$. Consider the case where $K\leq c(c_0,C_0,M) N r_2^2(\gamma, \rho^*)$, so $\sqrt{C_{K,r}(\rho^*,A)} = r_2(\gamma,\rho^*)$. 
For $\beta_j  = c \sqrt{\log(ep/j)}$, one may show that $\mathcal{B}_s = c \sqrt{s \log(ep/s)} $ (see \cite{bellec2016slope,LM_sparsity}). From~\eqref{res1_slope} and Lemma~\ref{se_slope}, it follows that we can choose
\begin{equation}\label{eq:comp_r2_rho}
\rho^{*} = c(c_0,M,C_0) s \frac{\log(ep/s)}{\sqrt{N}} \quad \mbox{and thus} \quad r_2^2(\gamma, \rho^{*}) = c(c_0,M,C_0)  \frac{s\log(ep/s)}{N}\enspace.
\end{equation}
For $C_{K,r}(\rho,A) =  c(c_0,M,C_0)  K/N$ holding when $K\geq c(c_0,C_0,M) N r_2^2(\gamma, \rho^*)$, we take $\rho^* = c(c_0,C_0,M)   \sqrt{sK/N}$ satisfying the sparsity equation. We can therefore apply Theorem~\ref{main:theo} for
\begin{equation*}
	\rho^{*} = c(c_0,M,C_0)  \max(s \sqrt{\log{(ep/s)}/N},\sqrt{sK}/\sqrt{N}) \enspace.
\end{equation*} 

\begin{Theorem}\label{th:slope}
	 Let $\varepsilon >0$ and $(X,Y)$ be random variable with values in $\bR^p \times \{\pm1\}$ such that $X$ is an isotropic random vector such that for all $1\leq j \leq p$  and  $1\leq q \leq C_0 \log(p)$, $\norm{\inr{X, e_j}}_{L_q} \leq M \sqrt{q}$ with $C_0 \log(p) \geq 2+ \varepsilon$. 
	 Let $f^*: x \in \bR^p  \mapsto \inr{x,t^*}$ be the oracle where $t^* \in \bR^p$ is $s$-sparse. Assume also that the oracle satisfies \eqref{cond_noise_lasso}.
	 Assume that $(X,Y), (X_i,Y_i)_{i\in\cI}$ are i.i.d and $N \geq cs \log(ep/s)$.
	 Let $K \geq 7|\cO|/3$. 
	 Let $\hat t_{\lambda,K}$ be the minmax MOM logistic Slope procedure introduced in \eqref{eq:minmax_mom_slope} for the choice of weights $\beta_j = \sqrt{\log(ep/j)}, j=1, \ldots, p$ and regularization parameter $\lambda =  c(c_0,M,C_0) \max(1/\sqrt{N},\sqrt{K/(sN)})$.  
	 With probability larger than $1-2\exp(- cK )$,
	\begin{gather*}
	\|\hat{t}_{\lambda,K} - t^{*}\|_{\text{SLOPE}} \leq  c(c_0,M,C_0)  \max \bigg( s \sqrt{ \frac{\log(ep/s)}{N}}, \sqrt{s}\sqrt{\frac{K}{N}} \bigg) ,\\ \|\hat{t}_{\lambda,K} - t^{*}\|_2^2 \leq   c(c_0,M,C_0)  \max \bigg( \frac{K}{N},  s\frac{\log(ep/s)}{N} \bigg)  \enspace,\\
	 P\cL_{\hat{t}_{\lambda,K}} \leq    c(c_0,M,C_0)  \max \bigg( \frac{K}{N},  s\frac{\log(ep/s)}{N} \bigg)\enspace.
	\end{gather*}
\end{Theorem}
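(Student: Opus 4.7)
The strategy is to apply the meta-theorem, Theorem~\ref{main:theo}, by verifying each of its hypotheses via the four-step routine described at the beginning of Section~\ref{sec:app}. Most of the required ingredients are already assembled in Section~\ref{sec:app}: the local Rademacher complexity is controlled in~\eqref{res1_slope}, a solution to the sparsity equation is provided by Lemma~\ref{se_slope}, and the local Bernstein condition for the logistic loss is handled by Proposition~\ref{prop:bernstein_logistic_loss}. The task therefore reduces to assembling these pieces for the specific choice of weights $\beta_j = \sqrt{\log(ep/j)}$.

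First I would check the structural assumptions. The logistic loss is $1$-Lipschitz and convex in its first variable, so Assumption~\ref{assum:lip} holds; the class $F = \set{\inr{t,\cdot} : t\in \bR^p}$ is a linear space, hence satisfies Assumption~\ref{assum:convex}; and Assumption~\ref{assum:moments} is immediate from the i.i.d.\ hypothesis on inliers. To compute $r_2(\gamma,\rho)$, I would observe that the dual ball of the SLOPE norm is closed under permutations and sign-changes, so the dual norm is $1$-unconditional with respect to the canonical basis. Combined with the $L_q$-moment bound on the coordinates of $X$, Theorem~\ref{theo:shahar} then bounds the Rademacher process by the Gaussian mean width, yielding~\eqref{res1_slope} and the explicit formula for $r_2(\gamma,\rho)$ displayed just after it.

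Next, to solve the sparsity equation, I would invoke Lemma~\ref{se_slope} with $v = t^*$ and $u = 0$, which is permitted since $t^*$ is exactly $s$-sparse. The key calculation is the elementary bound $\mathcal{B}_s = \sum_{i\leq s}\sqrt{\log(ep/i)/i} \leq c\sqrt{s\log(ep/s)}$. Plugging this into the condition $40\mathcal{B}_s \leq \rho/\sqrt{C_{K,r}(\rho,A)}$ and splitting on which of the two terms in $C_{K,r}$ dominates produces $\rho_1^* \asymp s\sqrt{\log(ep/s)/N}$ in the Rademacher-dominated regime and $\rho_2^* \asymp \sqrt{sK/N}$ in the regime where $c(A,L)K/N$ dominates; the admissible $\rho^*$ is the maximum of the two, which matches the rate announced in the theorem. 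For the local Bernstein condition I would apply Proposition~\ref{prop:bernstein_logistic_loss}: point (a) of Assumption~\ref{ass:bernstein_logistic_loss} is verified exactly as in the LASSO analysis of Section~\ref{sec_mom_lasso} (with $C' = c(M,C_0)$), and point (b) follows from the oracle-boundedness hypothesis imposed in the statement.

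Once these four checks are in place, Theorem~\ref{main:theo} applies with the prescribed $\lambda$ and returns the three stated bounds on the SLOPE norm, the $\ell_2$ error and the excess risk with probability at least $1-2\exp(-cK)$. The main obstacle is not any single step in isolation but a consistency check: ensuring that the Bernstein constant $A$ produced by Proposition~\ref{prop:bernstein_logistic_loss} can be treated as absolute. Controlling the exponential that defines $A$ requires an inequality of the form $r_2(\gamma,\rho^*)(2C')^{(2+\varepsilon)/\varepsilon} \leq c_0/2$, and translating this through the computed $r_2(\gamma,\rho^*)$ produces exactly the sample-size restriction $N \geq c s\log(ep/s)$ imposed in the hypotheses. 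This bookkeeping is the only nontrivial point in an otherwise mechanical verification.
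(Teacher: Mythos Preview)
Your proposal is correct and follows essentially the same route as the paper: verify Assumptions~\ref{assum:lip}, \ref{assum:convex}, \ref{assum:moments}, compute $r_2$ via Theorem~\ref{theo:shahar} and the $1$-unconditionality of the SLOPE dual norm, solve the sparsity equation through Lemma~\ref{se_slope} with the bound $\mathcal{B}_s \leq c\sqrt{s\log(ep/s)}$, check the local Bernstein condition via Proposition~\ref{prop:bernstein_logistic_loss}, and then invoke Theorem~\ref{main:theo}. The only cosmetic difference is the order in which you treat the sparsity equation and the Bernstein condition; the paper also explicitly records the intermediate value $\rho^* = c(c_0,M,C_0)\, s\log(ep/s)/\sqrt{N}$ in the Rademacher-dominated regime (cf.~\eqref{eq:comp_r2_rho}), which you may want to double-check against the form you wrote.
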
 
For $K \leq  c(c_0,M,C_0)   s \log(ep/s)/N$, the parameter $\lambda$ is independent from the unknown sparsity $s$ and these bounds match the minimax rates of convergence over the class of $s$-sparse vectors in $\bR^p$ without any restriction on $s$ \cite{bellec2016slope}. 
Ultimately, one can use a Lepski's adaptation method to chose in a data-driven way the number of blocks $K$ as in \cite{lecue2017robust} to achieve these optimal rates without prior knowledge on the sparsity $s$.

\subsection{The minmax MOM Huber Group-Lasso} \label{log_gl}
In this section, we consider regression problems where $\cY=\bR$. 
We consider group sparsity as notion of low-dimensionality for $t^*$. 
This setup is particularly useful when features (i.e. coordinates of $X$) are organized by blocks, as when one constructs dummy variables from a categorical variable. 

The regularization norm used to induce this type of  ``structured sparsity'' is called the Group LASSO (see, for example~\cite{yang2015fast} and~\cite{meier2008group}). 
It is built as follows: let $G_1,\cdots,G_M$ be a partition of $\{ 1,\cdots,p \}$ and define, for any $t\in \mathbb{R}^p$
\begin{equation}
\|t\|_{\text{GL}} = \sum_{k=1}^M \|t_{G_k}\|_2 \enspace.
\end{equation}
Here, for all $k=1, \ldots, M$, $t_{G_k}$ denotes the orthogonal projection of $t$ onto the linear $\text{Span}(e_i, i \in G_k)$ -- $(e_1, \ldots, e_p)$ being the canonical basis of $\bR^p$.

The estimator we consider is the minmax MOM Huber Group-LASSO defined, for all $K\in\{1,\cdots,N\}$ and $\lambda >0$, by
\begin{equation*}
\hat{t}_{\lambda,K} \in \argmin_{t \in \mathbb{R}^p}  \sup_{ \tilde{t} \in \mathbb{R}^p}  \bigg(  \MOM{K}{\ell_t - \ell_{\tilde{t}}  }+ \lambda (\|t\|_{GL}  - \|\tilde{t}\|_{GL})   \bigg)\enspace,
\end{equation*} 
where $t\in\bR^p\to\ell_t$ is the Huber loss function with parameter $\delta >0$ defined as
\[
\ell_t(X_i,Y_i) =  
\begin{cases}
\frac{1}{2}(Y_i-\inr{X_i,t})^2&\text{ if }|Y_i- \inr{X_i,t}| \leq \delta\\
\delta|Y_i-\inr{X_i,t}|-\frac{\delta^2}{2}&\text{ if }|Y_i-\inr{X_i,t}| > \delta
\end{cases}\enspace.
\]In particular, it is a Lipschitz loss function with $L=\delta$. 
Estimation bounds and oracle inequalities satisfied by $\hat{t}_{\lambda,K}$ follow from Theorem~\ref{main:theo} as long as we can compute the complexity function $r_2$, we verify the local Bernstein Assumption and we find a radius $\rho^*$ satisfying the sparsity equation. 
We now handle these problems starting with the computation of the complexity function $r_2$. 

The dual norm of $\|\cdot\|_{\text{GL}}$ is $z\in\bR^p\to\|z\|^{*}_{\text{GL}} = \max_{1\leq k \leq M} \|z_{G_k}\|_2$, it is not $\kappa$-unconditional with respect to the canonical basis $(e_i)_{i=1}^p$ of $\mathbb{R}^p$ for some absolute constant $\kappa$, so Theorem \ref{theo:shahar} does not apply directly. Therefore, in order to avoid long and technical materials on the rearrangement of empirical means under weak moment assumptions for the computation of the local Rademacher complexity from \eqref{comp:rad}, we simply assume that the design vectors $(X_i)_{i\in\cI}$ are $L_0$-subgaussian and isotropic: for all $i\in\cI$, all $t\in\bR^p$ and all $q\geq1$
\begin{equation}\label{eq:subgaussian_design}
\norm{\inr{X_i,t}}_{L_q}\leq L_0 \sqrt{q}\norm{\inr{X_i,t }}_{L_2} \mbox{ and } \norm{\inr{X_i,t }}_{L_2}  = \norm{t}_2.
\end{equation}In that case,  a direct chaining argument allows to bound Rademacher processes by the Gaussian processes (see \cite{MR3184689} for chaining methods):
\begin{align*}
\mathbb{E}\sup_{t \in \rho B_{\text{GL}}^p \cap r B^p_2} \bigg | \sum_{j \in J} \sigma_j \inr{t,X_j} \bigg |  \leq  c(L_0)\sqrt J w(\rho B_{\text{GL}}^p \cap r B_2^p )\enspace.
\end{align*}
Here, $B_{\text{GL}}^p$ is the unit ball of $\|\cdot\|_{\text{GL}}$, $w(\rho B_{\text{GL}}^p \cap r B_2^p )$ is the Gaussian mean width of the interpolated body $\rho B_{\text{GL}}^p \cap r B_2^p$. It follows from the proof of Proposition~6.7 in \cite{bellec2017towards} that when the $M$ groups $G_1, \ldots, G_M$ are all of same size $p/M$ we have
\begin{align*}
w(\rho B_{\text{GL}}^p \cap r B_2^p) \leq \left\{
\begin{array}{cc}
c \rho \sqrt{\frac{p}{M} + \log\left(\frac{Mr^2}{\rho^2}\right)} & \mbox{ if } 0<\rho\leq r\sqrt{M}\\
c r \sqrt{p} & \mbox{ if } \rho\geq r\sqrt{M}
\end{array}
\right.\enspace.
\end{align*} 
This yields
\begin{equation} \label{r:GL}
r_2^2(\gamma, \rho) =  c(\delta,L_0,\gamma) \left\{
\begin{array}{cc}
 \frac{\rho}{\sqrt N} \sqrt{\frac{p}{M} + \log\left(\frac{Mr^2}{\rho^2}\right)} & \mbox{ if } 0<c(\delta,L_0,\gamma)\frac{\rho}{r}\leq \sqrt{M}\\
 \frac{r}{ \sqrt N} \sqrt{p} & \mbox{ if } c(\delta,L_0,\gamma) \frac{\rho}{r}\geq \sqrt{M}
\end{array}
\right. \enspace.
\end{equation} 

Let us now turn to the local Bernstein Assumption. We need to verify Assumption~\ref{ass:bernstein_huber_loss}. As we assumed that the design vectors $(X_i)_{i \in \cI}$ are isotropic and $L_0$-subgaussian, it is clear that the point a) in Assumption~\ref{ass:bernstein_huber_loss} holds with $C' = L_0$. Let us take $\varepsilon = 2$ (another choice would only change the constant). 
For the point b), we assume that there exists $\alpha >0 $ such that, for all $x\in\cX$ and all $z\in \mathbb{R}$ satisfying $ |z-f^{*}(x) | \leq 2 L_0 ^2   \sqrt{C_{K,r}(\rho,4/\alpha)}$, $F_{Y|X=x}(z+\delta) - F_{Y|X=x}(z- \delta)\geqslant \alpha$. Under these conditons, the local Bernstein Assumption is verified for $A = 4/\alpha$ according to Proposition~\ref{prop:bernstein_huber_loss}. We will assume that $C_{K,r}(\rho^*,4/\alpha)\leq c$ for some absolute constant $c$ so that $\delta$ and $\alpha$ can be taken like absolute constant.  Condition ``$C_{K,r}(\rho^*,4/\alpha)\leq c$'' is satisfied when $N\gtrsim c s \log(ep/s)$.

Finally, we turn to the sparsity equation. The following lemma is an extension of Lemma~\ref{lemma_lasso} to the Group Lasso norm. 
\begin{lemma} \label{spars:GL}
	Assume that $X$ is isotropic. Assume that $t^{*} = u + v$ where $\|u\|_{\text{GL}} \leq \rho/20$ and $v$ is group-sparse i.e $v_{G_k}=0$ for all $k\notin I$ for some $I\subset \{1, \ldots,M\}$. If $100 |I| \leq (\rho/\sqrt{C_{K,r}(\rho,4/\alpha)})^2$, then $\Delta(\rho) \geq 4\rho /5 $. 
\end{lemma}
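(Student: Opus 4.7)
The plan is to mimic the proof of Lemma~\ref{lemma_lasso} for the $\ell_1$-norm, but adapted to the block structure of the Group-Lasso. I will fix an arbitrary $h\in\tilde{H}_{\rho,4/\alpha}$, set $w=h-t^*$, and exhibit a specific element $z^*\in\Gamma_{t^*}(\rho)$ such that $z^*(w)\geq 4\rho/5$. The natural candidate is a subgradient at $v$ itself, since by hypothesis $\norm{t^*-v}_{\text{GL}}=\norm{u}_{\text{GL}}\leq \rho/20$, so $v\in t^*+(\rho/20)B_{\text{GL}}$ and therefore $(\partial\norm{\cdot}_{\text{GL}})_v\subset\Gamma_{t^*}(\rho)$.

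First I will recall the structure of the subdifferential of the Group Lasso norm at $v$: because $\norm{\cdot}_{\text{GL}}$ is the sum over groups of the $\ell_2$ norms of the blocks, a vector $z^*$ belongs to $(\partial\norm{\cdot}_{\text{GL}})_v$ if and only if $z^*_{G_k}=v_{G_k}/\norm{v_{G_k}}_2$ whenever $v_{G_k}\neq 0$ (i.e.\ for $k\in I$) and $\norm{z^*_{G_k}}_2\leq 1$ otherwise. I use the remaining freedom on the blocks outside $I$ to align $z^*$ with $w$: set $z^*_{G_k}=w_{G_k}/\norm{w_{G_k}}_2$ for $k\notin I$ with $w_{G_k}\neq 0$ (and arbitrary unit vector otherwise). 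This gives
\begin{equation*}
z^*(w)=\sum_{k\in I}\Bigl\langle \frac{v_{G_k}}{\norm{v_{G_k}}_2},w_{G_k}\Bigr\rangle+\sum_{k\notin I}\norm{w_{G_k}}_2\geq -\sum_{k\in I}\norm{w_{G_k}}_2+\bigl(\norm{w}_{\text{GL}}-\sum_{k\in I}\norm{w_{G_k}}_2\bigr),
\end{equation*}
where the lower bound on the first sum is Cauchy--Schwarz applied blockwise, and the identity $\sum_{k\notin I}\norm{w_{G_k}}_2=\norm{w}_{\text{GL}}-\sum_{k\in I}\norm{w_{G_k}}_2$ is used in the second term. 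Hence $z^*(w)\geq \norm{w}_{\text{GL}}-2\sum_{k\in I}\norm{w_{G_k}}_2$.

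Next I will control $\sum_{k\in I}\norm{w_{G_k}}_2$ by a Cauchy--Schwarz bound in the index $k$:
\begin{equation*}
\sum_{k\in I}\norm{w_{G_k}}_2\leq \sqrt{|I|}\Bigl(\sum_{k\in I}\norm{w_{G_k}}_2^2\Bigr)^{1/2}\leq \sqrt{|I|}\,\norm{w}_2.
\end{equation*}
Since $X$ is isotropic and $F$ is the class of linear functionals, $\norm{w}_2=\norm{\inr{w,\cdot}}_{L_2}$, and because $h\in\tilde{H}_{\rho,4/\alpha}$ this is bounded by $\sqrt{C_{K,r}(\rho,4/\alpha)}$. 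The standing hypothesis $100|I|\leq\bigl(\rho/\sqrt{C_{K,r}(\rho,4/\alpha)}\bigr)^2$ therefore yields $\sqrt{|I|}\,\norm{w}_2\leq \rho/10$. Combining with $\norm{w}_{\text{GL}}=\rho$ gives $z^*(w)\geq \rho-2\rho/10=4\rho/5$, which is exactly the sparsity inequality $\tilde{\Delta}(\rho,4/\alpha)\geq 4\rho/5$.

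I do not expect any serious obstacle here: the argument is purely convex-analytic, and the only place where a choice needs to be made carefully is the construction of the subgradient on the blocks $k\notin I$. The only ingredient specific to the Group-Lasso is the separable form of its subdifferential, which behaves exactly like a "blockwise" version of the $\ell_1$ subdifferential used in Lemma~\ref{lemma_lasso}; accordingly, the "sparsity parameter" $s$ of the $\ell_1$ case is replaced by the number of active groups $|I|$ in the condition on $\rho$.
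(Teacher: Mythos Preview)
Your proof is correct and follows essentially the same approach as the paper's own proof: construct a subgradient $z^*\in(\partial\norm{\cdot}_{\text{GL}})_v\subset\Gamma_{t^*}(\rho)$ by taking $z^*_{G_k}=v_{G_k}/\norm{v_{G_k}}_2$ on active groups and $z^*_{G_k}=w_{G_k}/\norm{w_{G_k}}_2$ on inactive groups, then bound $\sum_{k\in I}\norm{w_{G_k}}_2\leq\sqrt{|I|}\,\norm{w}_2$ via Cauchy--Schwarz and use isotropy to pass from $\norm{w}_2$ to the $L_2$ localization radius. The paper's write-up differs only cosmetically (it verifies $z^*\in(\partial\norm{\cdot}_{\text{GL}})_v$ via the norming characterization~\eqref{eq:sub_diff_norm} rather than the explicit block-separable description of the subdifferential you invoke).
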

\proof
Let us define $r(\rho) := \sqrt{C_{K,r}(\rho,4/\alpha)}$ and recall that 
\begin{equation*}
\tilde{\Delta}(\rho, 4/\alpha) = \inf_{w\in \rho S_{GL}\cap r(\rho) B_2^p}\sup_{z^*\in\Gamma_{t^*}(\rho)}\inr{z^*, w}\enspace.
\end{equation*}
Here, $S_{GL}$ is the unit sphere of $\norm{\cdot}_{GL}$ and $\Gamma_{t^*}(\rho)$ is the union of all sub-differentials $(\partial\norm{\cdot}_{GL})_v$ for all $v\in t^* + (\rho/20)B_{GL}^p$. We want to find  a condition on $\rho>0$  insuring that $\tilde{\Delta}(\rho, 4/\alpha)\geq 4\rho/5$. 

Let $w$ be a vector in $\mathbb{R}^p$ such that $\|w\|_{GL} = \rho$ and $\|w\|_2 \leq r(\rho)$. We construct $z^*\in\bR^p$ such that $z^*_{G_k} = w_{G_k}/\norm{w_{G_k}}_2$ if $k\notin I$ (so that $\inr{z^{*}_{G_k}, w_{G_k}} = \|w_{G_k}\|_2$ for all $k \notin I$) and $z^*_{G_k} = v_{G_k} / \norm{v_{G_k}}_2$ if $k\in I$ (so that $\inr{z_{G_k}, v_{G_k}}=\norm{v_{G_k}}_2$ for all $k \in I$). We have $\norm{z^*_{G_k}}_2=1$ for all $k\in[M]$, so $\norm{z^*}_{GL}^*=1$ (i.e. $z^*$ is in the dual sphere of $\norm{\cdot}_{GL}$) and $\inr{z^*, v}=\norm{v}_{GL}$ (i.e. $z^*$ is norming for $v$). Therefore, it follows from \eqref{eq:sub_diff_norm} that $z^*\in(\partial \norm{\cdot}_{GL})_v$. Moreover, $\|u\|_{GL} \leq \rho/20$ hence $v\in t^*+(\rho/20)B_{GL}^p$ and so $z^*\in\Gamma_{t^*}(\rho)$. Furthermore, for this choice of sub-gradient $z^*$, we have
	\begin{align*}
	\inr{z^{*},w} & = \sum_{k \in I } \inr{z^{*}_{G_k},w_{G_k}} +  \sum_{k \notin I } \inr{z^{*}_{G_k},w_{G_k}} \geq -  \sum_{k \in I } \|w_{G_k}\|_2 +  \sum_{k \notin I } \norm{w_{G_k}}_2 \\
	& =  \sum_{k=1}^M \|w_{G_k}\|_2 - 2  \sum_{k \in I } \|w_{G_k}\|_2 \geq \rho - 2 \sqrt{|I|}r(\rho)\enspace.
	\end{align*} 
In the last inequality, we used that $\norm{w}_{GL}=\rho$ and  that  
\begin{equation*}
\sum_{k \in I } \|w_{G_k}\|_2  \leq \sqrt{|I|} \sqrt{\sum_{k \in I } \|w_{G_k}\|_2^2}\leq \sqrt{|I|}\norm{w}_2 \leq \sqrt{|I|} r(\rho).
\end{equation*} Then $\inr{z^{*},w } \geq 4\rho/5$ when $\rho - 2\sqrt{|I|} r(\rho) \geq 4\rho/5$ which happens to be true when $100|I|\leq  (\rho/r(\rho))^2$. 
\endproof

Assume that $t^*$ is exactly $s$-group sparse, so Lemma~\ref{spars:GL} applies. 
We consider two cases depending on the value of $K$. 
When $K\leq c(L_0,\alpha, \delta) N r_2^2(\gamma, \rho^*)$, $\sqrt{C_{K,r}(\rho^*,4/\alpha)} = r_2(\gamma,\rho^*)$. 
By Lemma~\ref{spars:GL} and~\eqref{r:GL}, it follows that (for equal size blocks), one can choose 
\begin{align} \label{res_GL} 	\rho^{*} =  c(L_0,\alpha, \delta)   \frac{s}{\sqrt{N}} \sqrt{\frac{p}{M} + \log M} \quad \mbox{and thus} \quad 	r^2( \gamma, \rho^{*}) = c(L_0,\alpha, \delta)    \frac{s}{N} \bigg( \frac{p}{M} + \log M \bigg) \enspace.
\end{align}
This result has a similar flavor as the one for the Lasso. 
The term $s^\prime = sp/M$ equals \emph{block sparsity} $\times$ \emph{size of each blocks}, i.e to the total number of non-zero coordinates in $t^*$: $s^\prime = \norm{t^*}_0$. 
Replacing the sparsity $s^\prime$ by $sp/M$ in Theorem~\ref{th:lasso}, we would have obtained  $\rho^* =  c(L_0,\alpha, \delta)  (sp/M) \sqrt{\log(p)/N}$ which is larger than the bound obtained for the Group Lasso in Equation~\eqref{res_GL}. 
It is therefore better to induce the sparsity by blocks instead of just coordinate-wise when we are aware of such block-structured sparsity. 
In the other case, when $K\leq c(L_0,\alpha, \delta)  N r_2^2(\gamma, \rho^*)$, we have $\sqrt{C_{K,r}(\rho^*,4/\alpha)}  = c(L_0,\alpha, \delta)   \sqrt{K/N}$ and so one can take $\rho^* = c(L_0,\alpha, \delta)  \sqrt{sK/N}$.
We can therefore apply Theorem~\ref{main:theo} with
\begin{equation*}
\rho^* = c(L_0,\alpha, \delta)  \max \bigg( \frac{s}{\sqrt N} \sqrt{ \frac{p}{M}+\log(M)} , \sqrt{s}\sqrt{\frac{K}{N}}  \bigg)\enspace.
\end{equation*} 

\begin{Theorem}\label{th:gl}Let $(X,Y)$ be a random variables with values in $\bR^p\times \bR$ such that $Y \in L_1$ and $X$ is an isotropic and $L_0$-subgaussian random vector in $\bR^p$. 
Assume that $(X,Y), (X_i,Y_i)_{i \in \cI}$ are i.i.d.
Let $f^*(\cdot) = \inr{t^*,\cdot}$ for some $t^*\in\bR^p$ which is $s$-group sparse with respect to equal-size groups $(G_k)_{k=1}^M$. 
Let $K \geq 7|\cO|/3$ and $N \geq cs(p/M+\log(M))$.
Assume that there exists $\alpha >0$ such that, for all $x \in \bR^p$ and all $z \in \bR$ satisfying $|z-\inr{t^*,x}| \leq 2L_0^2 \sqrt{C_{K,r}(2 \rho^*,4/\alpha)}$, $F_{Y | X=x} (\delta + z) - F_{Y | X = x}(z-\delta) \geq \alpha$ (where $F_{Y|X=x}$ is the cumulative ditribution function of $Y$ given $X=x$). 
With probability larger than $1-2\exp(- cK)$, the MOM Huber group-LASSO estimator $\hat t_{\lambda,K}$ for 
	\begin{equation*}
	\lambda = c(L_0,\alpha, \delta)  \max \bigg( \frac{1}{\sqrt{N}}\sqrt{\frac{p}{M} +\log M}, \sqrt{\frac{K}{sN}} \bigg)
	\end{equation*} 
	satisfies
	\begin{gather*}
	\|\hat{t}_{\lambda,K} - t^{*}\|_{\text{GL}}  \leq c(L_0,\alpha, \delta)  \max \bigg( \frac{s}{\sqrt N} \sqrt{ \frac{p}{M}+\log(M)} , \sqrt{s}\sqrt{\frac{K}{N}}  \bigg), \\
	 \|\hat{t}_{\lambda,K} - t^{*}\|_2^2 \leq  c(L_0,\alpha, \delta)  \max \bigg(  \frac{s}{N} \left( \frac{p}{M}+ \log(M) \right), \frac{K}{N} \bigg),\\
	P\cL_{\hat{t}_{\lambda,K}}  \leq c(L_0,\alpha, \delta)  \max \bigg(  \frac{s}{N} \left( \frac{p}{M}+ \log(M) \right), \frac{K}{N} \bigg)  \enspace.
	\end{gather*}
\end{Theorem}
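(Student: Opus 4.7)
}
The plan is to invoke the meta-theorem, Theorem~\ref{main:theo}, by verifying its four hypotheses through the routine announced at the beginning of Section~\ref{sec:app}. The Huber loss with parameter $\delta$ is $\delta$-Lipschitz and convex (so Assumption~\ref{assum:lip} holds with $L=\delta$); the class $F=\{\inr{t,\cdot}:t\in\bR^p\}$ is a linear space, hence convex, so Assumption~\ref{assum:convex} holds; and the i.i.d.\ hypothesis on the inliers covers Assumption~\ref{assum:moments}. It remains to compute $r_2(\gamma,\rho)$, to exhibit a $\rho^*$ solving the sparsity equation of Definition~\ref{def:SEMOM}, to verify the local Bernstein condition Assumption~\ref{assum:fast_rates_MOM}, and then to read off the conclusion of Theorem~\ref{main:theo}.

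For the complexity function, I would use the $L_0$-subgaussianity of the isotropic design to bound the Rademacher process by the corresponding Gaussian process through a standard chaining argument, and then invoke the Gaussian mean-width bound for the interpolation body $\rho B^p_{\text{GL}}\cap r B_2^p$ recalled in the discussion preceding \eqref{r:GL}. This directly yields the expression \eqref{r:GL} for $r_2(\gamma,\rho)$ with a constant depending on $\delta$, $L_0$ and $\gamma$. For the sparsity equation, I would apply Lemma~\ref{spars:GL} to the decomposition $t^*=v+u$ with $v=t^*$ and $u=0$: since $t^*$ is $s$-group sparse with respect to equal-size groups, the condition $100 s \leq (\rho/\sqrt{C_{K,r}(\rho,4/\alpha)})^2$ is the only requirement. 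Splitting according to whether $C_{K,r}(2\rho^*,4/\alpha)$ equals $r_2^2(\gamma,2\rho^*)$ or $c(4/\alpha,\delta)K/N$, and plugging \eqref{r:GL} in the first case, one obtains that
\begin{equation*}
\rho^* = c(L_0,\alpha,\delta)\max\!\left(\frac{s}{\sqrt{N}}\sqrt{\tfrac{p}{M}+\log M}\, ,\;\sqrt{s}\sqrt{\tfrac{K}{N}}\right)
\end{equation*}
satisfies $\tilde\Delta(\rho^*,4/\alpha)\geq 4\rho^*/5$.

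For the local Bernstein condition I would appeal to Proposition~\ref{prop:bernstein_huber_loss}, checking Assumption~\ref{ass:bernstein_huber_loss} with $\varepsilon=2$. Part (a) is immediate because, for any $f-f^*=\inr{t-t^*,\cdot}$, the $L_0$-subgaussian and isotropic hypothesis on $X$ yields $\|f-f^*\|_{L_4}\leq c L_0\|f-f^*\|_{L_2}$, so one may take $C'=cL_0$. Part (b) is exactly the hypothesis imposed on $F_{Y\mid X=x}$ in the statement, applied on the neighborhood of radius $2L_0^2\sqrt{C_{K,r}(2\rho^*,4/\alpha)}$. Proposition~\ref{prop:bernstein_huber_loss} then gives the Bernstein constant $A=4/\alpha$ on the relevant $L_2$-sphere. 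The condition $N\geq c s(p/M+\log M)$ ensures that $\sqrt{C_{K,r}(2\rho^*,4/\alpha)}$ is at most a small absolute constant, which allows the constant $A$ to remain an absolute one, as discussed in Remark~\ref{remark:ber}.

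With the four assumptions of Theorem~\ref{main:theo} verified at this $\rho^*$ and with the prescribed choice of $\lambda=(5/(17A))C_{K,r}(2\rho^*,A)/\rho^*$, which up to constants matches the $\lambda$ announced in Theorem~\ref{th:gl}, the conclusion of Theorem~\ref{main:theo} gives, with probability at least $1-2\exp(-cK)$, the bounds $\|\hat t_{\lambda,K}-t^*\|_{\text{GL}}\leq 2\rho^*$, $\|\hat t_{\lambda,K}-t^*\|_2^2\leq C_{K,r}(2\rho^*,4/\alpha)$ and $P\mathcal{L}_{\hat t_{\lambda,K}}\leq (\alpha/4) C_{K,r}(2\rho^*,4/\alpha)$, which translate into the three bounds of Theorem~\ref{th:gl} after plugging in the two possible expressions for $C_{K,r}(2\rho^*,4/\alpha)$. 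The main technical nuisance, rather than any genuine mathematical obstacle, is the careful bookkeeping of constants in two regimes ($K$ small or large compared to $Nr_2^2(\gamma,2\rho^*)$) and the verification that the regime-dependent $\rho^*$ simultaneously satisfies the sparsity equation and keeps the argument of Proposition~\ref{prop:bernstein_huber_loss} within the range where part (b) of Assumption~\ref{ass:bernstein_huber_loss} is granted.
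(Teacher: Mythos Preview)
Your proposal is correct and follows essentially the same approach as the paper: both verify the hypotheses of Theorem~\ref{main:theo} via the routine of Section~\ref{sec:app}, using chaining under the subgaussian design to obtain \eqref{r:GL}, Lemma~\ref{spars:GL} for the sparsity equation (with the same two-regime split on $C_{K,r}$), and Proposition~\ref{prop:bernstein_huber_loss} with $\varepsilon=2$ and $C'$ proportional to $L_0$ for the local Bernstein condition. The only cosmetic difference is that the paper takes $C'=L_0$ directly rather than $C'=cL_0$.
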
 
For $K \leq  c(L_0,\alpha, \delta)  s(p/M + \log M)$, the regularization parameter $\lambda$ is independent from the unknown group sparsity $s$ (the choice of $K$ can be done in data-driven way using either a Lepski method or a MOM cross validation as in \cite{lecue2017robust}).  In the ideal i.i.d. setup (with no outliers), the same result holds for the RERM as we assumed that the class $F-f^*$ is $L_0$-subgaussian and for the choice of regularization parameter $\lambda = c(L_0,\alpha, \delta)  ( \sqrt{p/(NM)} + \sqrt{\log(M)/N})$. The minmax MOM estimator has the advantage to be robust up to $c(L_0,\alpha, \delta)   s(p/M + \log M)$ outliers in the dataset.

 \subsection{Huber regression with total variation penalty} \label{app_tv}
In this section, we investigate another type of structured sparsity induced by the total variation norm. Given $t \in \R^p$, the Total Variation norm~\cite{osher2005iterative} is defined as
  \begin{equation}
  \|t\|_{TV}  = |t_1| + \sum_{i=1}^{p-1} |t_{i+1}-t_i|  = \|Dt \|_1, \quad \text{where}\quad D = \begin{bmatrix}
  	1 & 0 &\cdots & 0 & 0\\
  	-1 & 1 &\cdots & 0  & 0\\
  	\cdot & \cdot & \cdots & \cdot & \cdot\\
  	\cdot & \cdot & \cdots & \cdot & \cdot\\
  	0 & 0 & \cdots  & -1 & 1 
  \end{bmatrix}\in\bR^{p\times p}\enspace.
  \end{equation} 
  The total variation norm favors vectors such that their ``discrete gradient $Dt$ is sparse'' that is piecewise constant vectors $t$. 

The estimator considered in this section is the minmax MOM Huber TV regularization defined for all $\lambda >0$ and $K \in \{ 1,\cdots, N  \}$ as
  \begin{equation*}
  \hat{t}_{\lambda,K} \in \argmin_{t \in \mathbb{R}^p}  \sup_{ \tilde{t} \in \mathbb{R}^p}  \bigg(  \MOM{K}{\ell_t - \ell_{\tilde{t}}  }+ \lambda (\|t\|_{TV}  - \|\tilde{t}\|_{TV})   \bigg)\enspace,
  \end{equation*} 
  where the loss $\ell$ is the Huber loss: for $\delta >0$,
  \[
  \ell_t(X_i,Y_i) =  
  \begin{cases}
  \frac{1}{2}(Y_i-\inr{X_i,t})^2&\text{ if }|Y_i- \inr{X_i,t}| \leq \delta\\
  \delta|Y_i-\inr{X_i,t}|-\frac{\delta^2}{2}&\text{ if }|Y_i-\inr{X_i,t}| > \delta
  \end{cases}\enspace.
  \]

  Statistical bounds for $\hat{t}_{\lambda,K}$ follows from Theorem~\ref{main:theo} and the computation of $r_2$, $\rho^*$ and the study of the local Bernstein assumption. We start with the computation of the complexity function $r_2$. Simple computations yield that the dual norm of $\|\cdot\|_{TV}$ is $z \in \bR^p \mapsto \|z\|_{TV}^* = \|(D^{-1})^Tz\|_{\infty} = \max_{1 \leq k \leq p} |\sum_{i=1}^k z_i| $ which is not $\kappa$-unconditional with respect to the canonical basis $(e_i)_{i=1}^p$ of $\bR^p$ for some absolute constant $\kappa$. Therefore, Theorem \ref{theo:shahar} does not apply directly. To upper bound the Rademacher complexity from~\eqref{comp:rad}, we assume that the design vectors $(X_i)_{i \in \cI}$ are $L_0$-subgaussian and isotropic (see Equation~\eqref{eq:subgaussian_design}) as in Section~\ref{log_gl}. A direct chaining argument allows to bound the Rademacher complexity by the Gaussian mean width (see \cite{MR3184689} for chaining methods):
 \begin{align*}
 \mathbb{E}\sup_{t \in \rho B_{\text{TV}}^p \cap r B^p_2} \bigg | \sum_{j \in J} \sigma_j \inr{t,X_j} \bigg |  \leq  c(L_0)\sqrt J w(\rho B_{\text{TV}}^p \cap r B_2^p )
 \end{align*}
  \begin{Lemma}
  	For any $\rho,r >0$ such that $\rho \geq r$, $w( \rho B_{\text{TV}}^p \cap r B_2^p ) \leq c \sqrt{r\rho} p^{1/4}$ 
  \end{Lemma}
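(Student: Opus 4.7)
The plan is to bound the Gaussian mean width $w(V)$ with $V := \rho B_{TV}^p \cap r B_2^p$ by means of a piecewise-constant (block-averaging) approximation that simultaneously exploits both defining constraints of $V$. By definition, $w(V) = \mathbb{E}\sup_{t\in V} \langle g, t\rangle$ for $g$ a standard Gaussian vector in $\bR^p$, so it suffices to bound this supremum in expectation.

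First, for an integer $L \in \{1,\ldots,p\}$ dividing $p$, partition $\{1,\ldots,p\}$ into $p/L$ consecutive blocks of size $L$ and let $P_L$ denote the orthogonal projection onto the $(p/L)$-dimensional subspace of vectors constant on each block (block-averaging). Two complementary Poincar\'e-type estimates then control $P_L$ on $V$: since $P_L$ is an $L_2$-contraction, $\|P_L t\|_2 \leq \|t\|_2 \leq r$; and since within each block $|t_i - \bar t_{\text{block}}|$ is at most the TV of $t$ restricted to that block, summing and using $\sum_k x_k^2 \leq (\sum_k x_k)^2$ for non-negative $x_k$ together with $\|Dt\|_1 \leq \rho$ yields $\|(I - P_L) t\|_2 \leq \sqrt{L}\,\rho$.

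Second, using self-adjointness of $P_L$, decompose
\begin{equation*}
\langle g, t\rangle = \langle P_L g, P_L t\rangle + \langle (I - P_L) g, (I - P_L) t\rangle \enspace,
\end{equation*}
apply Cauchy--Schwarz termwise, and use $\E\|P_L g\|_2 \leq \sqrt{p/L}$ and $\E\|(I - P_L) g\|_2 \leq \sqrt{p}$. A single scale gives only $w(V) \lesssim r\sqrt{p/L} + \rho\sqrt{pL}$, which, even optimized in $L$, yields $w(V) \lesssim \sqrt{p r \rho}$; this is short of the target by a factor of $p^{1/4}$.

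The gain to $\sqrt{r\rho}\,p^{1/4}$ is obtained by iterating this decomposition at many dyadic scales, namely by writing $t = P_{L_0} t + \sum_{j} (P_{L_{j+1}} - P_{L_j}) t$ with $L_j$ halving (or equivalently, by feeding the covering numbers of $V$ in $L_2$ built from piecewise-constant approximants into Dudley's entropy integral). At scale $j$, the detail $(P_{L_{j+1}} - P_{L_j}) t$ is bounded in $L_2$ by the tighter of the two estimates $\min(r,\sqrt{L_j}\rho)$, while the corresponding projection of $g$ has expected $L_2$-norm $\sqrt{p/L_j}$; the multi-scale budget is then balanced by choosing the optimal scale $L_* \asymp p\rho/r$ at which the two Poincar\'e estimates coincide, producing the claimed bound. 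The main obstacle is precisely this balancing step: a naive scale-by-scale Cauchy--Schwarz wastes the joint information contained in both constraints and loses the $p^{1/4}$ improvement, so the chaining (or adaptive cover over all possible block partitions) has to be carried out with the tighter estimate $\|(I-P_L) t\|_2 \leq \min(r,\sqrt{L}\rho)$ used at each scale.
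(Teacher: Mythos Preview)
Your plan has a genuine gap at the multi-scale step. The single-scale decomposition is fine, but the promised improvement from $r\sqrt{p}$ (or $\sqrt{pr\rho}$) down to $\sqrt{r\rho}\,p^{1/4}$ does not come out of a dyadic equi-block chaining the way you describe. Under the lemma's standing hypothesis $\rho\geq r$, your ``tighter'' estimate $\min(r,\sqrt{L}\,\rho)$ equals $r$ for \emph{every} admissible block size $L\geq 1$, since $\sqrt{L}\,\rho\geq\rho\geq r$. Plugging this into your scale-by-scale Cauchy--Schwarz gives
\[
\sum_{j}\sqrt{p/L_j}\cdot\min(r,\sqrt{L_j}\,\rho)\;=\;r\sum_{j}\sqrt{p/L_j}\;\asymp\;r\sqrt{p},
\]
which is just the trivial bound $w(rB_2^p)\leq r\sqrt{p}$. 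Relatedly, the ``optimal scale'' you write, $L_*\asymp p\rho/r$, exceeds $p$ when $\rho\geq r$ and is therefore not a valid block size; the two ``Poincar\'e estimates'' $r$ and $\sqrt{L}\,\rho$ actually coincide at $L=(r/\rho)^2\leq 1$, i.e.\ never on the dyadic grid. So the mechanism you invoke for the $p^{1/4}$ gain is vacuous in the relevant regime.

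The underlying reason is that the sharp metric entropy of the TV ball, $\log N(B_{TV}^p\cap B_\infty^p,\epsilon B_2^p)\lesssim \sqrt{p}/\epsilon$, is a Birman--Solomyak type result that relies on \emph{adaptive} (signal-dependent) partitions---more refinement where the function varies more---not on uniform dyadic blocks, which only yield the weaker $\log N\lesssim p/\epsilon^2$. You allude to adaptive covers in a parenthesis, but that is the entire content of the argument, not a side remark. The paper sidesteps all of this by directly quoting the entropy bound $\log N(B_{TV}^p\cap B_\infty^p,\epsilon B_2^p)\lesssim \sqrt{p}/\epsilon$ (from \cite{van2020logistic}) and feeding it into Dudley's integral with upper limit $r$, which immediately gives $\int_0^r\sqrt{\rho\sqrt{p}/\epsilon}\,d\epsilon\asymp\sqrt{r\rho}\,p^{1/4}$. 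If you want a self-contained argument, you would need to reproduce that adaptive-partition entropy bound; the equi-block multi-resolution route does not reach it.
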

  \begin{proof}
  Let $\rho,r >0$ be such that $\rho \geq r$. From a simple chaining argument, it follows that 
  \begin{align*}
  w( \rho B_{\text{TV}}^p \cap r B_2^p ) & \leq \int_0^r \sqrt{\log N( \rho B_{TV}\cap r B_2^p, \epsilon B_2^p)}d\epsilon\leq \int_0^r \sqrt{\log N( B_{TV}\cap  B_\infty^p, (\epsilon/\rho) B_2^p)}d\epsilon \enspace,
  \end{align*}
  where $N(B_{TV}\cap B_\infty^p, \epsilon B_2^p)$ represents the number of translates of $\epsilon B_2^p$ needed to cover $B_{TV}\cap B_\infty^p$.
 From Lemma 4.3 in ~\cite{van2020logistic} 
  $$
  N(B_{TV}\cap  B_\infty^p, \epsilon B_2^p) \leq c\sqrt p/\epsilon \enspace,
  $$
  it follows that
  $$
   w( \rho B_{\text{TV}}^p \cap r B_2^p ) \leq \int_0^r \sqrt{\frac{\rho \sqrt p}{\epsilon}}d\epsilon \leq c \sqrt{r\rho} p^{1/4} \enspace. 
  $$
  \end{proof}
So one can take
\begin{equation*}
r_2^3(\gamma,\rho) = c(\delta,L_0,\gamma) \frac{\rho\sqrt p}{n}.
\end{equation*}  
  
  Let us now turn to the local Bernstein Assumption. The loss function and the model being the same as the ones in Section~\ref{log_gl} the Bernstein Assumption is verified with a constant $A = 4/\alpha$, if there exists a constant $\alpha >0$ such that for all $x\in\cX$ and all $z\in \mathbb{R}$ satisfying $ |z-f^{*}(x) | \leq 2 L_0 ^2  \sqrt{C_{K,r}(\rho,4/\alpha)}$, $F_{Y|X=x}(z+\delta) - F_{Y|X=x}(z- \delta)\geqslant \alpha$.  \\

 Let us turn to the sparsity equation.   The following Lemma solves the sparsity equation for the TV regularization.
  \begin{lemma} \label{se_tv}
  	Let us assume that $X$ is isotropic . If the oracle $t^*$ can be decomposed as $t^{*} = v + u $ for $u \in (\rho/20) B_{TV}^p$ and $400 s \leq (\rho/\sqrt{C_{K,r}(\rho,4/\alpha)})^2$, then $\Delta(\rho) \geq  4\rho/5$, where $s = |supp(Dv)|$.
  \end{lemma}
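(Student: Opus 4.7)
My plan is to mimic the argument used for the group-LASSO (Lemma~\ref{spars:GL}), replacing the block-structure of $\norm{\cdot}_{GL}$ by the fact that $\norm{t}_{TV} = \norm{Dt}_1$, and using the subdifferential chain rule for the composition of $\norm{\cdot}_1$ with the invertible linear map $D$. Set $r(\rho) := \sqrt{C_{K,r}(\rho, 4/\alpha)}$ and let $w \in \bR^p$ be arbitrary with $\norm{w}_{TV} = \rho$ and $\norm{w}_2 \leq r(\rho)$. Our task is to exhibit $z^* \in \Gamma_{t^*}(\rho)$ with $\inr{z^*, w} \geq 4\rho/5$.

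Since $D$ is lower triangular with unit diagonal, it is invertible, so standard convex analysis gives $(\partial \norm{\cdot}_{TV})_v = D^{\top}(\partial \norm{\cdot}_1)_{Dv}$. Let $I = \supp(Dv)$, so $|I| = s$. Mimicking the classical $\ell_1$ construction, define $y \in \bR^p$ coordinate-wise by $y_i = \sign((Dv)_i)$ for $i \in I$ and $y_i = \sign((Dw)_i)$ for $i \notin I$. Then $\norm{y}_{\infty} \leq 1$ and $y$ agrees with $\sign(Dv)$ on $I$, so $y \in (\partial \norm{\cdot}_1)_{Dv}$, and consequently $z^* := D^{\top} y \in (\partial \norm{\cdot}_{TV})_v$. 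Because $\norm{u}_{TV} \leq \rho/20$, we have $v \in t^* + (\rho/20) B_{TV}^p$, hence $z^* \in \Gamma_{t^*}(\rho)$.

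To bound $\inr{z^*, w} = \inr{y, Dw}$ from below, split the sum over $I$ and $I^c$:
\begin{equation*}
\inr{y, Dw} = \sum_{i \notin I} |(Dw)_i| + \sum_{i \in I} y_i (Dw)_i \geq \norm{Dw}_1 - 2\sum_{i \in I}|(Dw)_i| \geq \rho - 2\sqrt{s}\,\norm{Dw}_2,
\end{equation*}
where the last step uses $\norm{Dw}_1 = \norm{w}_{TV} = \rho$ and Cauchy–Schwarz on the $s$ coordinates indexed by $I$. A direct estimate (writing $D = I - S$ where $S$ is a shift) gives $\norm{D}_{\mathrm{op}} \leq 2$, hence $\norm{Dw}_2 \leq 2\norm{w}_2 \leq 2 r(\rho)$. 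Combining, $\inr{z^*, w} \geq \rho - 4\sqrt{s}\,r(\rho)$, and the hypothesis $400 s \leq (\rho/r(\rho))^2$ is exactly equivalent to $4\sqrt{s}\,r(\rho) \leq \rho/5$, yielding $\inr{z^*, w} \geq 4\rho/5$.

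The only step requiring any attention is the chain-rule identity $(\partial \norm{\cdot}_{TV})_v = D^{\top}(\partial \norm{\cdot}_1)_{Dv}$ and the constant in $\norm{D}_{\mathrm{op}} \leq 2$; both are elementary given that $D$ is invertible and differs from the identity by a single shift. Everything else is a verbatim adaptation of the argument used for Lemma~\ref{spars:GL}, with the role played there by coordinate-blocks $G_k$ now played by coordinates of the discrete gradient $Dw$, and the factor $100$ upgraded to $400$ to absorb the factor $\norm{D}_{\mathrm{op}} \leq 2$.
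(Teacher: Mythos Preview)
Your proof is correct and follows essentially the same approach as the paper: both construct $z^* = D^\top y$ with $y_i = \sign((Dv)_i)$ on $I = \supp(Dv)$ and $y_i = \sign((Dw)_i)$ off $I$, then bound $\inr{y,Dw} \geq \rho - 2\sqrt{s}\,\norm{Dw}_2$ and use $\norm{D}_{\mathrm{op}} \leq 2$ to conclude. The only cosmetic difference is that you invoke the chain rule $(\partial\norm{\cdot}_{TV})_v = D^\top(\partial\norm{\cdot}_1)_{Dv}$ abstractly, whereas the paper verifies directly via \eqref{eq:sub_diff_norm} that $z^*$ lies in the dual sphere and is norming for $v$.
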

Compared with Lemma~\ref{lemma_lasso}, sparsity in Lemma~\ref{se_tv} is granted on the linear transformation $Dt^*$ (also called discrete gradient of $t^*$) rather than on the oracle $t^*$. 
 \begin{proof}
Let us denote $\sqrt{C_{K,r}(\rho,4/\alpha)} := r(\rho)$. Let us recall that 
\begin{equation*}
\tilde{\Delta}(\rho, 4/\alpha) = \inf_{w\in \rho S_{TV}\cap r(\rho) B_2^p}\sup_{z^*\in\Gamma_{t^*}(\rho)}\inr{z^*, w}
\end{equation*}where $S_{TV}$ is the unit sphere of $\norm{\cdot}_{TV}$ and $\Gamma_{t^*}(\rho)$ is the union of all sub-differentials $(\partial\norm{\cdot}_{TV})_v$ for all $v\in t^* + (\rho/20)B_{TV}^p$. We want to find  a condition on $\rho>0$  insuring that $\tilde{\Delta}(\rho, 4/\alpha)\geq 4\rho/5$.  \\
Recall that the oracle $t^*$ can be decomposed as $t^* = u + v$, where $u \in (\rho/20)B_{TV}$ and thus $\|t^{*} - v\|_{TV} \leq \rho/20$. Let $I$ denote the support of $Dv$ and $s$ its cardinality. Let $I^C$ be the complementary of $I$. Let $w\in\rho S_{TV}^p \cup r(\rho) B_2^p $. \\

We construct $z^* = D^T u^*$, such for all $i$ in $I$, $u_i^* = sign \big((Dv)_i \big)$ and for all $i$ in $I^C$, $u_i^* = sign \big( (Dw)_i \big)$. 
Such a choice of $z^*$ implies that $\inr{z^*,v} = \inr{u^*,Dv} = \sum_{i \in I}  sign\big( (Dv)_i \big) (Dv)_i = \|v\|_{TV}$ i.e $z^*$ is norming for $v$. Moreover, we have $\|z^*\|_{TV}^* = \|(D^{-1})^T z^* \|_{\infty} = \|u^*\|_{\infty} = 1$ hence $z^* \in S_{TV}^*$. Then it follows from~\eqref{eq:sub_diff_norm} that $z^* \in (\partial \|\cdot\|_{TV})_v$ and since $u \in   (\rho/20)B_{TV} $ we have $z^* \in  \Gamma_{t^*}(\rho)$.\\

Now let us denote by $P_I w$ the orthogonal projection of $w$ onto $\text{Span}(e_i, i \in I)$. From the choice of $z^*$ we get
 	\begin{align*}
 	\inr{z^{*},w}  = \inr{D^Tu ^{*}, w} = \inr{u^{*},Dw}  & = \inr{u^{*},P_I Dw } + \inr{u^{*},P_{I^C} Dw } \\
	& \geq - \|P_I Dw\|_1 + \|P_{I^C} Dw \|_1 =  \|Dw\|_1 - 2\|P_I Dw\|_1
 	\end{align*} 
 	Moreover we have $\|P_I Dw\|_1 \leq \sqrt{s} \|P_I Dw\|_2 \leq \sqrt{s} \|Dw\|_2$ and, for $I_p$ the identity matrix, 
 	\begin{equation*}
 	\|Dw\|_2 = \|(I_p + D^-) w \|_2 \leq \| w\|_2 + \|D^- w \|_2 \leq 2 \|w\|_2 \leq  2 r(\rho)\enspace,
 	\end{equation*}
 	where
	\[
	 D^-= \begin{bmatrix}
 	0 & 0 &\cdots & 0 & 0\\
 	-1 & 0 &\cdots & 0  & 0\\
 	\cdot & \cdot & \cdots & \cdot & \cdot\\
 	\cdot & \cdot & \cdots & \cdot & \cdot\\
 	0 & 0 & \cdots  & -1 &0  
 	\end{bmatrix}\in\bR^{p\times p}\enspace.
	\]
	Since $\|Dw\|_1 = \|w\|_{TV} = \rho$, we get $\Delta(\rho)  \geq \rho - 4\sqrt{s} r(\rho) \geq 4\rho/5$  when $\rho \geq 20 \sqrt{s} r(\rho)$.
 \end{proof}

Let us now identify a radius $\rho^*$ satisfying the sparsity equation using Lemma~\ref{se_tv}. We place ourselves under the assumption from Lemma~\ref{se_tv} that is when $t^*$ is such that $Dt^*$ is approximately $s$-sparse. There are two cases to study according to the value of $K$. For the case where $\sqrt{C_{K,r}(\rho^*,4/\alpha)} = r_2(\gamma, \rho^*)$-- which holds when $K\leq c(L_0,\alpha,\delta) N r_2^2(\gamma, \rho^*)$-- , we can take
 \begin{align*}{}
 \rho^* =  c(L_0,\alpha,\delta) \frac{s^{3/4}p^{1/4}}{\sqrt{N}} \quad \mbox{and} \quad r^2_2(\gamma,\rho^*) =  c(L_0,\alpha,\delta) \frac{\sqrt{sp}}{N}. 
 \end{align*}
 For $C_{K,r}(\rho^*,4/\alpha) = c(L_0,\alpha,\delta) K/N$-- which holds when $K\geq c(L_0,\alpha,\delta) N r_2^2(\gamma, \rho^*)$-- we can take $\rho^* = c(L_0,\alpha,\delta) \sqrt{sK/N}$. We can therefore apply Theorem~\ref{main:theo} with
 \begin{equation*}
	 \rho^* = c(L_0,\alpha,\delta) \max\left(\frac{s^{3/4}p^{1/4}}{\sqrt{N}}  ,\sqrt{sK/N} \right) \enspace.
 \end{equation*}
 
To simplify the presentation, we assume that $Dt^*$ is exactly $s$-sparse. We may only assume it is approximatively $s$-sparse using the more involved formalism of Lemma~\ref{se_tv}.
 \begin{Theorem}Let $(X,Y)$ be a random variables with values in $\bR^p\times \bR$ such that $Y \in L_1$ and $X$ is an isotropic and $L_0$-subgaussian random vector in $\bR^p$. Assume that $(X,Y), (X_i,Y_i)_{i \in \cI}$ are i.i.d.
Let $f^*(\cdot) = \inr{t^*,\cdot}$, where $t^*$ is such that $Dt^*$ is $s$-sparse. 
 Let $K \geq 7|\cO|/3$ and $N \geq c s^{3/4}p^{1/4}$.
 Assume that there exists $\alpha >0$ such that, for all $x \in \bR^p$ and all $z \in \bR$ satisfying $|z-\inr{t^*,x}| \leq 2L_0^2 \sqrt{C_{K,r}(2 \rho^*,4/\alpha)}$, $F_{Y | X=x} (\delta + z) - F_{Y | X = x}(z-\delta) \geq \alpha$ (where $F_{Y|X=x}$ is the cumulative distribution function of $Y$ given $X=x$). 
 With probability larger than $1-2\exp(- cK)$, the MOM Huber TV estimator $\hat t_{\lambda,K}$ for 
 	\begin{equation*}
 	\lambda= c(L_0,\alpha,\delta)   \max \bigg( \bigg(\frac{p}{s}\bigg)^{1/4} \frac{1}{\sqrt{N}},\sqrt{ \frac{K}{sN} }  \bigg)
 	\end{equation*} 
 	satisfies
 	\begin{gather*}
 	\|\hat{t}_{\lambda,K} - t^{*}\|_{\text{TV}} \leq c(L_0,\alpha,\delta)  \max   \bigg( \frac{s^{3/4}p^{1/4}}{\sqrt{N}}  , \sqrt{s}\sqrt{\frac{K}{N}} \bigg)   
 		 \\ 
 		 \|\hat{t}_{\lambda,K} - t^{*}\|_2^2 \leq c(L_0,\alpha,\delta) \max \bigg(  \frac{\sqrt{sp}}{N}, \frac{K}{N} \bigg) \enspace,\\
 	P\cL_{\hat{t}_{\lambda,K}} \leq  c(L_0,\alpha,\delta)  \max \bigg(  \frac{\sqrt{sp} }{N}, \frac{K}{N} \bigg) \enspace.
 	\end{gather*}
 \end{Theorem}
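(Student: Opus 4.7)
The plan is to apply Theorem~\ref{main:theo} directly, since essentially all the ingredients have been prepared in the discussion immediately preceding the statement. I first verify the structural assumptions: Assumption~\ref{assum:lip} holds since the Huber loss is $\delta$-Lipschitz and convex; Assumption~\ref{assum:convex} holds because $F=\{\inr{t,\cdot}:t\in\bR^p\}$ is the class of all linear functionals on $\bR^p$; and Assumption~\ref{assum:moments} holds by the i.i.d. hypothesis on $(X,Y),(X_i,Y_i)_{i\in\cI}$. This reduces the problem to checking Assumption~\ref{assum:fast_rates_MOM} at the announced $\rho^*$ and using the computed complexity $r_2(\gamma,\rho)$.

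Next, I would record the complexity function. The subgaussian and isotropic assumption on $X$ plus a standard chaining argument yields
\begin{equation*}
\E\sup_{t\in\rho B_{TV}^p\cap rB_2^p}\Bigl|\sum_{j\in J}\sigma_j\inr{t,X_j}\Bigr|\leq c(L_0)\sqrt{|J|}\,w(\rho B_{TV}^p\cap rB_2^p),
\end{equation*}
and the lemma above (via the covering estimate from \cite{van2020logistic}) gives $w(\rho B_{TV}^p\cap rB_2^p)\leq c\sqrt{r\rho}\,p^{1/4}$ for $\rho\geq r$. Solving $c(\delta,L_0,\gamma)\sqrt{r\rho}\,p^{1/4}\leq r^2\sqrt{N}$ produces the fixed point $r_2^2(\gamma,\rho)=c(\delta,L_0,\gamma)\rho\sqrt{p}/N$ used in the sequel. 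For the sparsity step, Lemma~\ref{se_tv} applied with $v=t^*$ (so $s=|\mathrm{supp}(Dt^*)|$ and $u=0$) shows that any $\rho$ with $400s\leq(\rho/\sqrt{C_{K,r}(\rho,4/\alpha)})^2$ satisfies the sparsity equation from Definition~\ref{def:SEMOM}. Splitting cases on whether $C_{K,r}(\rho^*,4/\alpha)$ is dominated by the Rademacher term or by $c(A,L)K/N$ yields, as described in the text above,
\begin{equation*}
\rho^*=c(L_0,\alpha,\delta)\max\!\left(\frac{s^{3/4}p^{1/4}}{\sqrt{N}},\sqrt{\frac{sK}{N}}\right),\qquad C_{K,r}(2\rho^*,4/\alpha)=c(L_0,\alpha,\delta)\max\!\left(\frac{\sqrt{sp}}{N},\frac{K}{N}\right).
\end{equation*}

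The third step is the local Bernstein condition. Here I would invoke Proposition~\ref{prop:bernstein_huber_loss} with $A=4/\alpha$ and $\varepsilon=2$: the isotropic $L_0$-subgaussian property yields $\|f-f^*\|_{L_{4}}\leq c(L_0)\|f-f^*\|_{L_2}$, giving condition (a) with $C'=c(L_0)$. Condition (b) is exactly the hypothesis made on the conditional cdf $F_{Y|X=x}$ with the localization radius $2L_0^2\sqrt{C_{K,r}(2\rho^*,4/\alpha)}$. The assumption $N\gtrsim s^{3/4}p^{1/4}$ (together with the case $K\geq 7|\cO|/3$) ensures $\sqrt{C_{K,r}(2\rho^*,4/\alpha)}$ is at most an absolute constant, so $\delta$ and $\alpha$ can indeed be treated as constants and the local Bernstein condition holds on the appropriate $L_2$-sphere.

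Finally, all hypotheses of Theorem~\ref{main:theo} are satisfied, so with probability at least $1-2\exp(-cK)$,
\begin{equation*}
\|\hat t_{\lambda,K}-t^*\|_{TV}\leq 2\rho^*,\quad \|\hat t_{\lambda,K}-t^*\|_2^2\leq C_{K,r}(2\rho^*,4/\alpha),\quad P\cL_{\hat t_{\lambda,K}}\leq \tfrac{\alpha}{4}C_{K,r}(2\rho^*,4/\alpha),
\end{equation*}
and the choice of $\lambda=(5/(17A))C_{K,r}(2\rho^*,A)/\rho^*$ from Theorem~\ref{main:theo} coincides with the value stated. Substituting the expressions for $\rho^*$ and $C_{K,r}(2\rho^*,4/\alpha)$ computed above yields exactly the three announced bounds. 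The only subtle point I expect to merit care is bookkeeping the two regimes of $K$ uniformly through the $\max$, and verifying that in both regimes the radius $2L_0^2\sqrt{C_{K,r}(2\rho^*,4/\alpha)}$ used to apply Proposition~\ref{prop:bernstein_huber_loss} remains of constant order so that $A=4/\alpha$ is genuinely an absolute constant; this is ensured by the lower bound $N\gtrsim s^{3/4}p^{1/4}$ and $K\geq 7|\cO|/3$.
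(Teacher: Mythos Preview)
Your approach is correct and matches the paper's: all the work is done in Section~\ref{app_tv} before the theorem, and the proof consists of verifying the hypotheses of Theorem~\ref{main:theo} (Lipschitz/convexity, the complexity $r_2$, the sparsity equation via Lemma~\ref{se_tv}, and the local Bernstein condition via Proposition~\ref{prop:bernstein_huber_loss}) and then reading off the conclusion. One slip to fix: solving $c\sqrt{r\rho}\,p^{1/4}\leq r^2\sqrt N$ gives $r_2^3(\gamma,\rho)=c(\delta,L_0,\gamma)\rho\sqrt p/N$, not $r_2^2$; with the correct exponent one indeed recovers $\rho^*\asymp s^{3/4}p^{1/4}/\sqrt N$ from $400s\leq(\rho/r_2(\gamma,\rho))^2$ and then $r_2^2(\gamma,\rho^*)\asymp\sqrt{sp}/N$, exactly as you state afterwards by deferring to the text.
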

Since the Assumptions on the design $X$ imply that the class $F-f^*$ is $L_0$-subgaussian. The minmax MOM estimator has the advantage to be robust up to $\sqrt {sp}$ outliers in the dataset without deteriorating the rate of convergence.

\subsection{Other possible applications}
The fusion of two sparsity structures, namely the Total Variation and $\ell_1$ norms leads to the fused Lasso (see~\cite{tibshirani2005sparsity}) defined for some mixture parameters $\beta, \eta > 0$ for all $t \in \bR^p$ by
\begin{equation*}
\|t\|_{FL}  = \eta \|t\|_1 + \beta \|t\|_{TV} \enspace.
\end{equation*} This type of norm is expected to promote signals having both a small number of non-zero coefficients (thanks to the $\ell_1^p$-norm) and a sparse discrete gradient (thanks to the TV norm) i.e. sparse and constant by blocks signals. It is possible to use our approach to study theoretical guarantees of this estimator. The technical point is the computation of the local Gaussian mean width $w(B_{FL}^p \cap rB_2^p )$, for $r>0$, where $B_{FL}^p$ denotes the unit ball associated with $\|\cdot\|_{FL}$. We may use some trivial bound such as 
\begin{equation}\label{eq:trivial_bound_fused}
w(B_{FL}^p \cap rB_2^p )\leq \min\left( w((1/\eta)B_1^p\cap rB_2^p ), w((1/\beta)B_TV^p\cap rB_2^p \right)
\end{equation}to obtain a result for the Fused LASSO similar to the one obtain for the $\ell_1$-penalty and the TV penalty. However, we believe that a sharper analysis of the local Gaussian mean width $w(B_{FL}^p \cap rB_2^p )$ together with a better understanding of the sparsity inducing power of  $\|\cdot\|_{FL}$ could reveal more interesting phenomena and a better fit of the mixture parameters $\eta$ and $\beta$ than the trivial bound \eqref{eq:trivial_bound_fused} allows. We leave this problem  open for the moment. 



Nevertheless, a take home message is as follows: as soon as we are able to compute the complexity parameter (often directly related to a local Gaussian mean-width), we can apply our approach and establish sharp oracle inequalities. It may however be a difficult problem to get a sharp upper bound on this complexity parameter and the fused lasso is a typical example. 

\section{Simulations}\label{sec:Simu}
This section provides a  simulation study to illustrate our theoretical findings. Minmax MOM estimators are approximated using an alternating  proximal  block gradient descent/ascent with a wisely chosen block of data as in \cite{lecue2017robust}. At each iteration, the block on which the descent/ascent is performed is chosen according to its ``centrality'' (see algorithm~\ref{algo_mom} below). There are so far no theoretical guarantees of convergence of this MOM version of the projected gradient descent/ascent algorithm. However, the aim of this section is to show that it works well in practice. To that end, two examples from high-dimensional statistics are considered 1) Logistic classification with a $\ell_1$ penalization and 2) Huber regression with a Group-Lasso penalization. 

\subsection{Presentation of the algorithm}
Let $\cX = \bR^p$ and let  $F = \{  \inr{t,\cdot}, t \in \bR^p   \}$.  The oracle $f^{*} = \argmin_{f \in F} P\ell_f(X,Y)$ is such that $f^{*}(\cdot) = \inr{t^{*}, \cdot}$ for some $t^*\in\bR^p$. The minmax MOM estimator is defined as
\begin{equation} \label{MOM_lin}
\hat{t}_{\lambda,K} \in \argmin_{t \in \bR^p} \sup_{\tilde{t} \in \bR^p}  MOM_K(\ell_t - \ell_{\tilde{t}}) + \lambda (\|t\| - \|\tilde{t}\|)
\end{equation}
where $\ell$ is a convex and Lipschitz loss function and $\|\cdot\|$ is a norm in $\bR^p$. 

Following the idea of \cite{lecue2017robust}, the minmax problem~\eqref{MOM_lin} is approximated by a proximal block gradient ascent-descent algorithm, see Algorithm~\ref{algo_mom}. 
At each step, one considers the block of data realizing the median and perform an ascent/descent step onto this block. 
The regularization step is obtained via the proximal operator
\begin{equation*}
\text{prox}_{\lambda \|\cdot\|}:x\in\mathbb{R}^p \to \argmin_{y \in \bR^p}  \bigg \{ \frac{1}{2}\|x-y\|_2^2 + \lambda \|y\| \bigg\} \enspace.
\end{equation*}
\RestyleAlgo{boxruled}
\LinesNumbered
\begin{algorithm}[h]  
	\SetAlgoLined
	\KwIn{A number of blocks $K$, initial points $t_0$ and $\tilde{t}_0$ in $\mathbb{R}^p$,  two sequences of step sizes $(\eta_t)_t$ and $(\tilde{\eta}_t)_t$ and $T$ a number of epochs}
	\KwOut{ An approximating solution of the minimax problem \eqref{MOM_lin}}
	\For{$i = 1,\cdots,T$}{
		Construct a random equipartition $B_1 \sqcup \cdots \sqcup B_K$ of $\{ 1, \cdots ,N \} $\\
		Find $k \in [K] $ such that $\text{MOM}_K (\ell_{t_i}-\ell_{\tilde{t}_i})= P_{B_k}(\ell_{t_i}-\ell_{\tilde{t}_i})$\\
		Update:\\
		 \centering $t_{i+1}  = \text{prox}_{\lambda \|\cdot\|} \big( t_i - \eta_i\nabla_t(t\to P_{B_k}\ell_t)_{|t=t_i} \big) $ \\
		 \vspace{0.2cm}
		 \centering $\tilde{t}_{i+1}  = \text{prox}_{\lambda \|\cdot\|} \big( \tilde{t}_i - \tilde{\eta}_i\nabla_{\tilde{t}}(\tilde{t}\to P_{B_k}\ell_{\tilde{t}})_{|\tilde{t}=\tilde{t}_i} \big)$
	}
	\caption{Proximal Descent-Ascent gradient method with median blocks \label{algo_mom}}
\end{algorithm}

To make the presentation simple in Algorithm~\ref{algo_mom}, we have not perform any line search or any sophisticated stopping rule (see, \cite{lecue2017robust} for more involved line search and stopping rules in the setup of minmax MOM algorithms). To compare the statistical and robustness performances of the minmax MOM and  RERM, we perform a proximal gradient descent to approximate the RERM, see Algorithm~\ref{algo_erm} below. 
\RestyleAlgo{boxruled}
\LinesNumbered
\begin{algorithm}[h]  
	\SetAlgoLined
	\KwIn{Initial points $t_0$ in $\mathbb{R}^p$ and a sequence of stepsizes $(\eta_t)_t$}
	\KwOut{ Approximating solution to the RERM estimator.}
	%
	\For{$i = 1, \cdots , T$}{
		\centering $t_{i+1}  = \text{prox}_{\lambda \|\cdot\|} \big( t_i - \eta_i \nabla_t(t \to
		 P_{N}\ell_t)_{|t=t_i} \big) $ \\
	}
	\caption{Proximal gradient descent algorithm \label{algo_erm} }
\end{algorithm}\\
The number of blocks $K$ is chosen by MOM cross-validation (see~\cite{lecue2017robust} for more precision on that procedure). The sequences of stepsizes are constant along the algorithm $(\eta_t)_t := \eta$ and $(\tilde{\eta}_t)_t = \tilde \eta$ and are also chosen by MOM cross-validation.


\subsection{Organisation of the results}
In all simulations, the links between inputs and outputs are given in the regression and classification problems in $\bR^p$ respectively by the following model:
\begin{equation}\label{classif}
\mbox{in regression: }Y =  \inr{X,t^{*}} + \zeta;\qquad  \mbox{in classification: } Y =  \text{sign} \big( \inr{X,t^{*}} + \zeta \big) 
\end{equation}where the distribution of $X$ and $\zeta$ depend on the considered framework:
\begin{itemize}
	\item \textbf{First framework:} $X$ is a standard Gaussian random vector in $\bR^p$ and $\zeta$ is a real-valued standard Gaussian variable  independent of $X$ with variance $\sigma^2$. 
	\item \textbf{Second framework:} $X$ is a standard Gaussian random vector in $\bR^p$ and $\zeta \sim \mathcal T (2)$ (student distribution with $2$ degrees of freedom). This framework is used to verify the robustness w.r.t the noise.
	\item \textbf{Third framework:} $X = (x_1,\cdots,x_p)$ with $x_1,\ldots, x_p \overset{i.i.d.}{\sim} \mathcal T (2)$ and  $\zeta$ is a real-valued standard Gaussian variable  independent of $X$ with variance $\sigma^2$. Here we want to test the robustness w.r.t heavy-tailed design $(X_i)_i$.
	\item \textbf{Fourth framework:}  $X = (x_1,\cdots,x_p)$ with $x_1,\ldots, x_p \overset{i.i.d.}{\sim} \mathcal T (2)$ and $\zeta \sim \mathcal T (2)$. We also corrupt the database with $|\cO|$ outliers which  are such that for all $i\in\cO$, $X_i = (10^5)_{i=1}^p$ and $Y = 1$. Here we verify the robustness w.r.t possible outliers in the dataset.
\end{itemize}
 
In a both first and second frameworks, the RERM and minmax MOM estimators are expected to perform well according to Theorem~\ref{thm:main_subgaussian} and Theorem~\ref{main:theo} even though the noise $\zeta$ can be heavy-tailed. In the third framework, the design vector $X$ is no longer subgaussian, as a consequence Theorem~\ref{thm:main_subgaussian} does not apply and we have no guarantee for the RERM. On the contrary, Theorem~\ref{main:theo} provides statistical guarantees for the  minmax MOM estimators. Nevertheless, it should also be noticed that the study of RERM under moment assumptions on the design can also be performed, see for instance \cite{MR3763780}. In that case, the rates of convergence are still the same but the deviation is only polynomial whereas it is exponential for the minmax MOM estimators. Therefore, in the third example, we may expect similar performance for both estimators but with a larger variance in the results for the RERM. In the fourth framework,  the database has been corrupted by outliers (in both outputs $Y_i$ and inputs $X_i$); in that case, only minmax MOM estimators are  expected to perform well as long as $|\cO|$ is not too large compare with $K$, the number of blocks.

\subsection{Sparse Logistic regression}
Let $\ell$ denote the Logistic loss (i.e. $t\in\bR^p\to \ell_t(x,y)=\log(1+\exp(-y\inr{x,t})), \forall x\in\bR^p, y\in\cY=\{\pm1\}$), and let the $\ell_1$ norm in $\bR^p$ be the regularization norm. Figure~\ref{fig:log} presents the results of our simulations for $N=1000$, $p = 400$ and $s = 30$. In subfigures (a), (b) and (c) the error is the $L_2$ error, which is here $\norm{\hat t_{K,\lambda}^T - t^*}_2$, between the output $\hat t_{K,\lambda}^T$ of the algorithm and the true $t^* \in \bR^p$. In subfigure (d), an increasing number of outliers is added. The error rate is the proportion of misclassification on a test dataset. The stepsizes, the number of block and the parameteter of regularization are all chosen by MOM cross-validation (see~\cite{lecue2017robust} for more details on the MOM cross-validation procedure) 
Subfigure (a) shows convergence of the error for both algorithms in the first framework. 
Similar performances are observed for both algorithms but Algorithm~\ref{algo_mom} converges faster than Algorithm~\ref{algo_erm}. It may be because the computation of the gradient on a smaller batch of data in step \textbf{5} and \textbf{6} of Algorithm~\ref{algo_mom}  is faster than the one on the entire database in step \textbf{2} of Algorithm~\ref{algo_erm} and that the choice of the median blocks at each descent/ascent step is particularly good in Algorithm~\ref{algo_mom}. Subfigure (b) shows the results in the second framework. The convergence for the alternating gradient ascent/descent algorithm is a bit faster than the one from Algorithm~\ref{algo_erm}, but the performances are the same. Subfigure (c) shows results in the third setup where $\zeta$ is Gaussian and the feature vector $X = (x_1,\cdots,x_p)$ is  heavy-tailed, i.e. $x_1, \ldots, x_p$ are i.i.d. with $x_1 \sim \mathcal T (2)$ -- a Student with degree $2$. Minmax MOM estimators perform better than RERM. It highlights the fact that minmax MOM estimators have optimal subgaussian performance even without the sub-gaussian assumption on the design while RERM are expected to have downgraded statistical properties in heavy-tailed scenariis. Subfigure (d) shows result in the fourth setup where an increasing number of outliers is added in the dataset. Outliers are $X = (10^5)_{1}^p$ and $Y_i=1$ a.s..  While RERM has deteriorated performance just after one outliers was added to the dataset,  minmax MOM estimators maintains good performances up to $10\%$ of outliers. 

\begin{figure}[h!]
	\centering
	\begin{subfigure}[b]{0.3\linewidth} \label{log_no_outliers}
		\includegraphics[width=\linewidth]{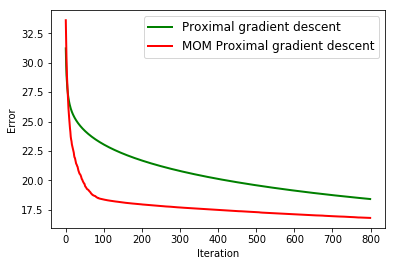}
		\caption{Gaussian design and Gaussian noise.}
	\end{subfigure}
	\begin{subfigure}[b]{0.3\linewidth} \label{log_heavy}
		\includegraphics[width=\linewidth]{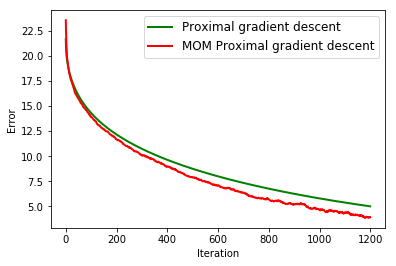}
		\caption{Heavy-tailed noise $\zeta$ (Student distribution of order $2$) and standard Gaussian design.}
	\end{subfigure}
	\begin{subfigure}[b]{0.3\linewidth} \label{log_design}
		\includegraphics[width=\linewidth]{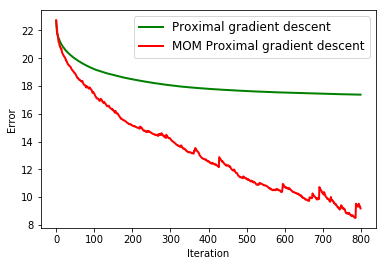}
		\caption{Gaussian noise and  heavy-tailed design (Student distribution of order $2$).}
	\end{subfigure}
	\begin{subfigure}[b]{0.7\linewidth} \label{log_outliers}
		\includegraphics[width=\linewidth]{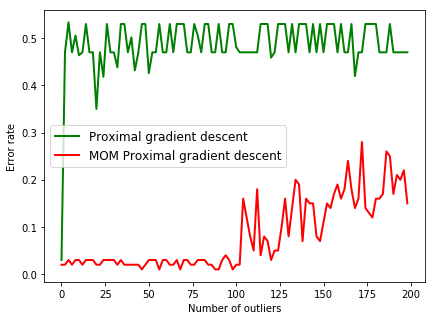}
		\caption{Student of order $2$ design and noise corrupted by outliers.}
	\end{subfigure}
	\caption{$\ell_2$ estimation error rates of RERM and minmax MOM proximal descent algorithms (for the logistic loss and the $\ell_1$ regularization norm) versus time in (a), (b) and (c) and versus number of outliers in (d) in the classification model~\eqref{classif} for $N = 1000$, $p = 400$ and $s = 30$.}
	\label{fig:log}
\end{figure}

\subsection{Huber regression with a Group Lasso penalty }
Let $\ell$ denote the Huber loss function $t\in\bR^d\to \ell_t(x,y)= (y-\inr{x,t})^2/2$ if $|y-\inr{x,t}|\leq \delta$ and $\ell_t(x,y)= \delta|y-\inr{x,t}| - \delta^2/2$ other wise for all $x\in\bR^p$ and $y\in \cY = \bR$. Let $G_1,\cdots,G_M$ be a partition of $\{1,\cdots,p\}$, $\|t\| = \|t\|_{\text{GL}} = \sum_{k=1}^M \|t_{G_k}\|_2$. Figure~\ref{fig:log} presents the results of our simulation for $N=1000$, $p = 400$ for $30$ blocks with a block-sparsity parameter $s=5$. In subfigures (a), (b) and (c), the error is the $L_2$-error between the output of the algorithm and the oracle $t^*$ -- which corresponds here to a $\ell_2^p$ estimation error, given that the design in all cases is isotropic. In subfigure (d) the prediction error on a (non-corrupted) test set of both the RERM and the minmax MOM estimators are depicted. 

The conclusion are the same as for the Lasso Logistic regression: Algorithm~\ref{algo_mom} (regularized minmax MOM) has better performances than algorithm~\ref{algo_erm} (RERM) in case of heavy-tailed inliers and when outliers pollute the dataset while both are robust w.r.t heavy-tailed noise. 

\begin{figure}[h!]
	\centering
	\begin{subfigure}[b]{0.3\linewidth} \label{hub_no_outliers}
		\includegraphics[width=\linewidth]{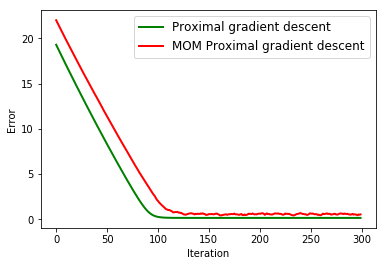}
		\caption{Simulations from model~\eqref{classif} with standard Gaussian design and Gaussian noise}
	\end{subfigure}
	\begin{subfigure}[b]{0.3\linewidth} \label{hub_heavy}
		\includegraphics[width=\linewidth]{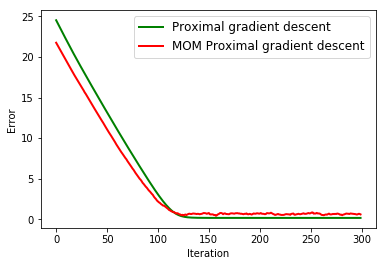}
		\caption{Simulation with heavy-tailed noise $\zeta$ and standard Gaussian design}
	\end{subfigure}
	\begin{subfigure}[b]{0.3\linewidth} \label{hub_design}
		\includegraphics[width=\linewidth]{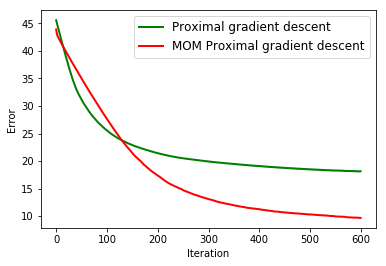}
		\caption{Simulations with Gaussian noise  heavy tailed design (Student distribution)}
	\end{subfigure}
	\begin{subfigure}[b]{0.7\linewidth} \label{hub_outliers}
		\includegraphics[width=\linewidth]{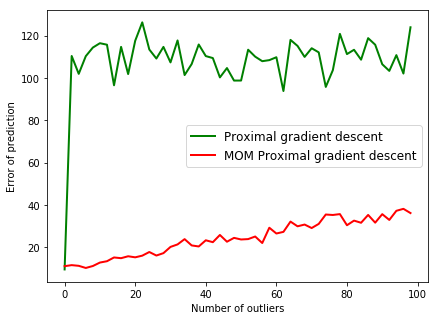}
		\caption{Error of prediction in function of the number of outliers in the dataset}
	\end{subfigure}
	\caption{Results for the Huber regression with Group-Lasso penalization}
	\label{fig:hub}
\end{figure}

\section{Conclusion} 
\label{sec:conclusion}
We obtain estimation and prediction results for RERM and regularized minmax MOM estimators for any Lipschitz and convex loss functions and for any regularization norm. When the norm has some sparsity inducing properties the statistical bounds depend on the dimension  of the low-dimensional structure where the oracle belongs. We develop a systematic way to analyze both estimators by identifying three key idea 1) the local complexity function $r_2$ 2) the sparsity equation 3) the local Bernstein condition. All these quantities and condition depend only on the structure and complexity of a local set around the oracle. This local set is ultimately proved to be the smallest set containing our estimators. We show the versatility of our main meta-theorems on several applications covering two different loss functions and four sparsity inducing regularization norms. Some of them inducing highly structured sparsity concept such as total variation norm. 

On top of these results, we show that the minmax MOM approach is robust to outliers and to heavy-tailed data and that the computation of the key objects such as the complexity functions $r_2$ and a radius $\rho^*$ satisfying the sparsity equation can be done in this corrupted heavy-tailed scenario. Moreover, we show in a simulation section that they can be computed by a simple modification of existing proximal gradient descent algorithms by simply adding a selection step of the central block of data in these algorithms. The resulting algorithms are robust to heavy-tailed data and to few outliers (in both input and output variables) for the examples in Section~\ref{sec:Simu}. 

\acks{We would like to thank Sara van de Geer for pointing to us Lemma 4.3 in~\cite{van2020logistic}. 

Guillaume Lecu{\'e} is supported by a grant overseen by the French National Research Agency (ANR) as part of the“ Investments d’Avenir ”Program (LabEx ECODEC; ANR-11-LABX-0047), by the M{\'e}diam{\'e}trie chair on 'Statistical models and analysis of high-dimensional data' and by  the French ANR PRC grant ADDS (ANR-19-CE48-0005).

Matthieu Lerasle is supported by a grant overseen by the French National Research Agency (ANR) as part of the“ Investments d’Avenir ”Program (LabEx ECODEC; ANR-11-LABX-0047).}

\section{Proof Theorem~\ref{thm:main_subgaussian} }\label{sec:proofs}
All along this section we will write $r(\rho)$ for $r(A,\rho)$. Let $\theta = 1/(3A)$.
The proof is divided into two parts. 
First, we identify an event where the RERM $\hat{f} := \hat{f}^{RERM}_{\lambda}$ is controlled. 
Then, we prove that this event holds with large probability. Let $\rho^*$ satisfying the $A$-sparsity Equation from Definition~\ref{def:SE} and let $\mathcal B = \rho^* B \cap r(\rho^*)B_{L_2}$ and consider
\begin{equation*}
\Omega := \left\{ 
\forall f\in F\cap(f^*+\mathcal B), \quad \big|(P-P_N)\cL_f\big|\leq \theta  r^2(\rho^*) \right\}\enspace.
\end{equation*} 

\begin{Proposition}\label{prop:algebra}Let $\lambda$ be as in \eqref{eq:reg_param_choice}  and let $\rho^*$ satisfy the $A$- sparsity from Definition~\ref{def:SE}. On $\Omega$, one has 
	\begin{align*}
\|\hat{f}-f^{*}\| \leq \rho^*,\quad
\|\hat{f}-f^{*}\|_{L_2} &\leq r( \rho^*) \mbox{ and }
	P\cL_{\hat f} \leq  A^{-1} r^2(\rho^*)\enspace.
	\end{align*}
\end{Proposition}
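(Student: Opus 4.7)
The plan is a homogeneity/projection argument that exploits the convexity of $F$ (Assumption~\ref{assum:convex}) and of $\ell$ (Assumption~\ref{assum:lip}). Suppose, for contradiction, that $\hat f \notin f^* + \mathcal{B}$. For $\alpha \in [0,1]$, set $f_\alpha = f^* + \alpha(\hat f - f^*)\in F$. Convexity of $\ell(\cdot,y)$ and of $\|\cdot\|$ makes the functional $f \mapsto P_N\ell_f + \lambda\|f\|$ convex, and $\hat f$ is a global minimizer of it over $F$, so
\[
P_N \mathcal{L}_{f_\alpha} + \lambda(\|f_\alpha\| - \|f^*\|) \;\leq\; \alpha\bigl(P_N \mathcal{L}_{\hat f} + \lambda(\|\hat f\| - \|f^*\|)\bigr) \;\leq\; 0,
\]
which gives $P_N \mathcal{L}_{f_\alpha} \leq \lambda(\|f^*\| - \|f_\alpha\|)$ for every $\alpha \in [0,1]$. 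Because $f_0 = f^*$ lies in the interior of $f^* + \mathcal{B}$ while $f_1 = \hat f$ lies outside, there is a smallest $\alpha^* \in (0,1)$ such that $f_{\alpha^*}$ hits the boundary of $f^*+\mathcal{B}$; thus $f_{\alpha^*}\in f^*+\mathcal{B}$ and either (A) $\|f_{\alpha^*}-f^*\|=\rho^*$, or (B) $\|f_{\alpha^*}-f^*\|_{L_2}=r(\rho^*)$. Using the event $\Omega$ at $f_{\alpha^*}\in f^*+\mathcal{B}$,
\[
P\mathcal{L}_{f_{\alpha^*}} \;\leq\; P_N\mathcal{L}_{f_{\alpha^*}} + \theta r^2(\rho^*) \;\leq\; \lambda(\|f^*\|-\|f_{\alpha^*}\|) + \theta r^2(\rho^*).
\]

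In Case (A), $f_{\alpha^*}\in H_{\rho^*,A}$, so the $A$-sparsity equation produces $z^* \in (\partial\|\cdot\|)_g$ for some $g\in f^*+(\rho^*/20)B$ with $z^*(f_{\alpha^*}-f^*)\geq 4\rho^*/5$. The subgradient inequality $\|f_{\alpha^*}\|\geq \|g\|+z^*(f_{\alpha^*}-g)$, combined with $\|z^*\|^*\leq 1$ and $\|g\|\geq \|f^*\|-\rho^*/20$, yields $\|f_{\alpha^*}\|\geq \|f^*\|+7\rho^*/10$. Substituting $\|f^*\|-\|f_{\alpha^*}\|\leq -7\rho^*/10$ into the display above and using $P\mathcal{L}_{f_{\alpha^*}}\geq 0$ gives $7\lambda\rho^*/10 \leq \theta r^2(\rho^*)$, i.e.\ $\lambda \leq (10/(21A))r^2(\rho^*)/\rho^*$ (with $\theta=1/(3A)$), contradicting the lower bound on $\lambda$ in~\eqref{eq:reg_param_choice}.

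In Case (B), the local Bernstein condition (Assumption~\ref{assum:fast_rates}) applies to $f_{\alpha^*}$ because $\|f_{\alpha^*}-f^*\|_{L_2}=r(\rho^*)$ and $\|f_{\alpha^*}-f^*\|\leq\rho^*$, so $P\mathcal{L}_{f_{\alpha^*}}\geq r^2(\rho^*)/A$. The triangle inequality $\|f^*\|-\|f_{\alpha^*}\|\leq \|f_{\alpha^*}-f^*\|\leq \rho^*$ together with $\theta=1/(3A)$ turn the display above into $\lambda \geq (2/(3A))r^2(\rho^*)/\rho^*$, contradicting the upper bound on $\lambda$. Hence $\hat f\in f^*+\mathcal{B}$, which gives the first two bounds of the proposition. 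The excess-risk bound follows from the same chain of inequalities with $\alpha=1$: on $\Omega$,
\[
P\mathcal{L}_{\hat f} \;\leq\; P_N\mathcal{L}_{\hat f}+\theta r^2(\rho^*) \;\leq\; \lambda\|\hat f-f^*\| + \theta r^2(\rho^*) \;\leq\; \lambda\rho^* + \theta r^2(\rho^*) \;\leq\; r^2(\rho^*)/A,
\]
using the upper bound on $\lambda$ and $\theta=1/(3A)$.

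The main technical obstacle is matching the constants so that Case~(A) contradicts the lower bound on $\lambda$, Case~(B) contradicts the upper bound, and the remaining slack in the last inequality fits exactly the prediction bound $r^2(\rho^*)/A$: this is precisely what forces the two-sided interval in~\eqref{eq:reg_param_choice}. The convexity-based projection $f_\alpha$ is the crucial device, replacing the more delicate peeling scheme of~\cite{pierre2017estimation} and allowing the concentration on $\Omega$ to be required only on the local set $f^*+\mathcal{B}$ instead of the full $\|\cdot\|$-ball of radius $\rho^*$.
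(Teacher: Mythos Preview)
Your proof is correct and follows essentially the same homogeneity/convexity argument as the paper: both reduce to a boundary point of $f^*+\mathcal B$, split into the two cases $\|f_0-f^*\|=\rho^*$ (handled via the sparsity equation) and $\|f_0-f^*\|_{L_2}=r(\rho^*)$ (handled via the local Bernstein condition), and use the event $\Omega$ together with the two-sided bound on $\lambda$ to force a contradiction. The only cosmetic difference is that the paper projects outward (showing $P_N\mathcal L_f^\lambda>0$ for every $f\notin f^*+\mathcal B$ via $f-f^*=\alpha(f_0-f^*)$ with $\alpha>1$), whereas you project inward from $\hat f$ with $\alpha\in(0,1)$; the inequalities and constants are identical.
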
 

\begin{proof} 
Prove first that $\hat f\in f^*+\mathcal B$. 
Recall that
\begin{equation*}
\forall f\in F,\qquad \cL_f^\lambda = \cL_f + \lambda(\norm{f} - \norm{f^*})\enspace.
\end{equation*}
Since $\hat f$ satisfies $P_N\cL_{\hat f}^\lambda\leqslant 0$, it is sufficient to prove that $P_N\cL_f^\lambda>0$ for all $f\in F \backslash(f^*+\mathcal B)$ to get the result. 
The proof relies on the following homogeneity argument.
If $P_N\cL_{f_0}>0$ on the border of $f^*+ \mathcal B$, then $P_N\cL_f>0$ for all $f\in F\setminus\{f^*+ \mathcal B\}$. 

Let $f\in F\setminus\{f^*+ \mathcal B\}$. 
By convexity of $\{f^*+ \mathcal B\}\cap F$, there exists $f_0\in F$ and $\alpha>1$ such that $f-f^* = \alpha(f_0-f^*)$ and $f_0\in\partial(f^*+ \mathcal B)$ where $\partial(f^*+ \mathcal B)$ denotes the border of $f^*+ \mathcal B$ (see, Figure~\ref{fig:homogen}).

\begin{figure}[h]
\centering
\begin{tikzpicture}[scale=0.3]
\draw (0,0) circle (8cm);
\draw (1.5,0) node {$f^*$};
\filldraw (0,0) circle (0.2cm);
\draw (-10,0) -- (0,10) -- (10,0) -- (0,-10) -- (-10,0);
\draw (11,0) node {$\rho^* B$};
\draw (9.2,5) node {$r(\rho^*) B_{L_2}$};
\draw (-1,-4) node {$\mathbf{\boldsymbol{\rho}^* B\cap r(\boldsymbol{\rho}^*) B_{L_2}}$};
\draw (-10,8) -- (0,0);
\draw (-11,8) node {$f$};
\filldraw (-10,8) circle (0.2cm);
\draw (-5.5,3.5) node {$f_0$};
\filldraw (-5.6,4.5) circle (0.2cm);

\draw[ultra thick] (-7.7,2.3) -- (-2.3,7.7) arc (110:70:6.8) -- (7.7,2.3) arc (20:-20:6.8) -- (2.3,-7.7) arc (-70:-110:6.8) -- (-7.7, -2.3) arc (-160:-200:6.8);

\end{tikzpicture}
\caption{Construction of $f_0$.}
\label{fig:homogen}
\end{figure}
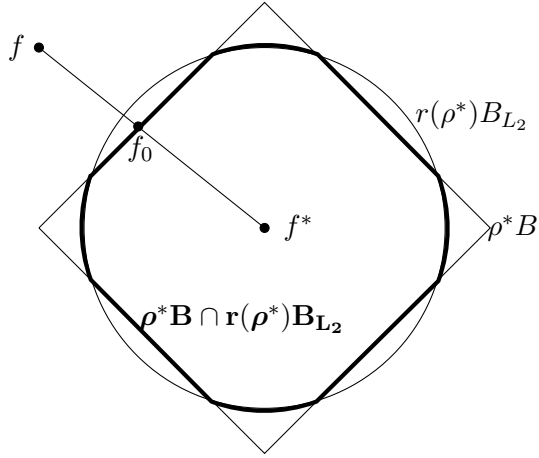

For all $i \in \{1,\cdots,N \}$, let $\psi_i: \mathbb R \rightarrow \mathbb R $ be the random function defined for all $u\in \R$ by 
\begin{equation}\label{eq:fct_psi}
\psi_i(u) = \ell (u + f^{*}(X_i), Y_i) - \ell (f^{*}(X_i), Y_i)\enspace.
\end{equation}
By construction, for any $i$, $\psi_i(0) = 0$ and $\psi_i$ is convex because $\ell$ is. Hence, $\alpha\psi_i(u) \leq \psi_i(\alpha u)$ for all $u\in\mathbb R$ and $\alpha \geq 1$. In addition, $\psi_i(f(X_i) - f^{*}(X_i) )=  \ell (f(X_i), Y_i) - \ell (f^{*}(X_i), Y_i) $. Therefore,
\begin{align} \label{conv_arg}
\nonumber P_N \cL_f & = \frac{1}{N} \sum_{i=1}^{N}  \psi_i \big( f(X_i)- f^{*}(X_i) \big )= \frac{1}{N} \sum_{i=1}^{N} \psi_i \big( \alpha( f_0(X_i)- f^{*}(X_i) ) \big)\\
&\geq \frac{\alpha}{N} \sum_{i=1}^{N}   \psi_i( f_0(X_i)- f^{*}(X_i)) = \alpha P_N \cL_{f_0}\enspace.
\end{align}
For the regularization term, by the triangular inequality, 
\begin{align*}
\norm{f}-\norm{f^*} & = \norm{f^*+\alpha(f_0-f^*)}-\norm{f^*} \geq \alpha(\norm{f_0}-\norm{f^*})\enspace.
\end{align*}From the latter inequality, together with \eqref{conv_arg}, it follows that
\begin{equation}\label{eq:homog_final}
P_N\cL_f^\lambda \geq \alpha P_N\cL_{f_0}^{\lambda}\enspace.
\end{equation}
As a consequence, if $P_N\cL_{f_0}^{\lambda}>0$ for all $f_0\in F \cap \partial(f^*+ \mathcal B)$ then  $P_N\cL_f^\lambda>0$ for all $f\in F \backslash (f^*+ \mathcal B)$.\\

In the remaining of the proof, assume that $\Omega$ holds and let $f_0\in F \cap \partial(f^*+ \mathcal B)$. As $f_0\in F\cap (f^*+ \mathcal B)$, on $\Omega$,
\begin{equation}\label{eq:use_event_omega}
|(P-P_N)\cL_{f_0}|\leq \theta r^2(\rho^*)\enspace.
\end{equation}  
By definition of $\mathcal B$, as $f_0\in \partial(f^*+ \mathcal B)$, either: 1) $\norm{f_0-f^*}=\rho^*$ and $\norm{f_0-f^*}_{L_2}\leq r(\rho^*)$ so $\alpha = \norm{f-f^*}/\rho^*$ or 2) $\norm{f_0-f^*}_{L_2}=r(\rho^*)$ and $\norm{f_0-f^*}\leq \rho^*$ so $\alpha = \norm{f-f^*}_{L_2}/r(\rho^*)$. We treat these cases independently.  

Assume first that $\norm{f_0-f^*}=\rho^*$ and $\norm{f_0-f^*}_{L_2}\leq r(\rho^*)$. Let $v\in E$ be such that $\norm{f^*-v}\leq \rho^*/20$ and $g\in \partial \norm{\cdot}(v)$. We have
\begin{align*}
&\norm{f_0}-\norm{f^*} \geq \norm{f_0} - \norm{v} - \norm{f^*-v}\geq \inr{g,f_0-v} - \norm{f^*-v}\\
&\geqslant\inr{g,f_0-f^*} - 2\norm{f^*-v}\geqslant\inr{g,f_0-f^*} - \rho^*/10\enspace.
\end{align*}
As the latter result holds for all $v\in f^*+(\rho^*/20)B$ and $g\in \partial \norm{\cdot}(v)$, since $f_0-f^*\in \rho^* S\cap r(\rho^*)B_{L_2}$, it yields 
\begin{equation}\label{eq:homogen_reg}
\norm{f_0}-\norm{f^*}\geq \Delta(\rho^*) - \rho^*/10\geq 7\rho^*/10\enspace.
\end{equation}
Here, the last inequality holds because $\rho^*$ satisfies the sparsity equation. Hence,
\begin{equation}\label{eq:main1}
P_N \cL_{f}^\lambda = P_N \cL_{f}+ \lambda\left(\norm{f} - \norm{f^*}\right)\geq \alpha (P_N\cL_{f_0} + 7\lambda\rho^*/10)\enspace.
\end{equation}
Thus, on $\Omega$, since $\lambda > 10\theta r^2(\rho^{*}) ^{2}/(7 \rho^{*})$,
\begin{equation*}
P_N\cL_{f_0} + 7\lambda \rho^*/10 = P\cL_{f_0} + (P_N-P)\cL_{f_0} + 7 \lambda \rho^*/10\geq -\theta r^2(\rho^*) + 7\lambda \rho^*/10>0\enspace.
\end{equation*}

Assume now that $\norm{f_0-f^*}_{L_2}=r(\rho^*)$ and $\norm{f_0-f^*}\leq \rho^*$. 
By Assumption~\ref{assum:fast_rates}, on $\Omega$,
\begin{align*}
P_N \cL_{f}^\lambda&\geqslant P_N \cL_{f_0} - \lambda\norm{f_0-f^*} \geqslant P\cL_{f_0} + (P_N-P)\cL_{f_0} - \lambda\rho^*\\
&\geqslant A^{-1}\norm{f_0-f^*}_{L_2}^{2}-\theta r^2(\rho^*) - \lambda\rho^* \geqslant (A^{-1} - \theta)r^2(\rho^*) - \lambda\rho^*\enspace.
\end{align*}
From \eqref{eq:reg_param_choice}, $\lambda < (A^{-1}-\theta)   r^2(\rho^{*}) ^{2}/\rho^{*}$, thus $P_N \cL_{f}^\lambda>0$. Together with \eqref{eq:main1}, this proves that $\hat{f}\in f^*+\mathcal B$. 
Now, on $\Omega$, this implies that $|(P-P_N)\cL_{\hat f}|\leq \theta r^2(\rho^*)$, so by definition of $\hat{f}$,
\begin{equation*}
P\cL_{\hat f} = P_N\cL_{\hat f}^\lambda + (P-P_N)\cL_{\hat f}+\lambda(\|f^*\|-\|\hat{f}\|)\leqslant \theta r^2(\rho^*)+\lambda\rho^*\leqslant A^{-1}r^2(\rho^*)\enspace.
\end{equation*}
\end{proof}

To prove that $\Omega$ holds with large probability, the following result from \cite{pierre2017estimation} is useful. 
\begin{Lemma} \cite[Lemma~9.1]{pierre2017estimation} \label{lem:subgauss}
Grant Assumptions~\ref{assum:lip} and~\ref{ass:sub-gauss}. Let $F^\prime\subset F$ denote a subset with finite $L_2$-diameter $d_{L_2}(F^\prime)$. For every $u>0$, with probability at least $1-2\exp(-u^2)$
	\begin{equation*}
	\sup_{f,g\in F^\prime}\left|(P-P_N)(\cL_f-\cL_g)\right|\leq \frac{16LL_0}{\sqrt{N}} \left(w(F^\prime) +   u d_{L_2}(F^\prime)\right)\enspace.
	\end{equation*} 
\end{Lemma}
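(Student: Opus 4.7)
The plan is to bound the supremum of the centered empirical process $(f,g)\mapsto (P-P_N)(\cL_f-\cL_g)$ over $F'\times F'$ by combining symmetrization, the Lipschitz contraction principle, a subgaussian-to-Gaussian comparison, and a concentration step.

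First I would apply standard symmetrization with independent Rademacher signs $(\sigma_i)_{i=1}^N$:
\[ \E\sup_{f,g\in F'}|(P-P_N)(\cL_f-\cL_g)|\leq \frac{2}{N}\E\sup_{f,g\in F'}\left|\sum_{i=1}^N \sigma_i(\cL_f-\cL_g)(X_i,Y_i)\right|. \]
Since $\ell(\cdot,y)$ is $L$-Lipschitz by Assumption~\ref{assum:lip}, first splitting the difference via the triangle inequality and then applying the Ledoux--Talagrand contraction principle conditionally on $(Y_i)$ bounds the right-hand side by a constant multiple of $L/N$ times the Rademacher complexity of $F'-f^*$.

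Next I would convert the Rademacher complexity into a Gaussian mean width. Because $F-f^*$ is $L_0$-subgaussian (Assumption~\ref{ass:sub-gauss}), a standard chaining argument -- using that, conditionally on $(X_i)$, the Gaussian process $\sum_i g_i h(X_i)$ has increments whose $L_2$-norm is controlled in expectation by $\|h_1-h_2\|_{L_2}$, together with the fact that Rademacher sums are stochastically dominated by Gaussian sums -- yields
\[ \frac{1}{N}\E\sup_{h\in F'-f^*}\left|\sum_{i=1}^N \sigma_i h(X_i)\right|\leq \frac{c L_0}{\sqrt N}\,w(F'). \]
Combining with the previous step produces $\E\sup \leq c' LL_0 w(F')/\sqrt N$ for a universal constant $c'$.

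To upgrade the expectation into the high-probability statement with the $u\,d_{L_2}(F')$ term, I would observe that the process $Z_f:=\sqrt N(P-P_N)\cL_f$ has subgaussian increments of scale $c LL_0\|f-g\|_{L_2}$: the Lipschitz bound gives $|\cL_f(X_i,Y_i)-\cL_g(X_i,Y_i)|\leq L|(f-g)(X_i)|$, which is $LL_0\|f-g\|_{L_2}$-subgaussian by Assumption~\ref{ass:sub-gauss}, and independent centering-and-averaging preserves the scale through Hoeffding. A Talagrand-type concentration inequality for suprema of processes with subgaussian increments (the analogue of Borell--TIS beyond the Gaussian setting) then gives
\[ \sup_{f,g\in F'}\sqrt N\,|(P-P_N)(\cL_f-\cL_g)|\leq \E\sup + c'' LL_0\,u\,d_{L_2}(F') \]
with probability at least $1-2\exp(-u^2)$. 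Dividing by $\sqrt N$ and absorbing universal constants into the prefactor $16$ finishes the argument.

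The main obstacle is the concentration-around-the-mean step: while Borell--TIS yields the desired deviation for Gaussian suprema immediately, here one must invoke its analogue for general subgaussian processes, which requires either a generic-chaining tail bound or a reduction to the Gaussian case through increment comparison. Tracking the universal constants arising in symmetrization, contraction, Rademacher-to-Gaussian comparison, and concentration to arrive at the exact value $16$ is routine but demands attention.
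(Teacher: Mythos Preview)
The paper does not give its own proof of this lemma; it simply imports it as \cite[Lemma~9.1]{pierre2017estimation} and uses it as a black box. Your sketch is therefore not competing against anything in the present paper, but it is a faithful reconstruction of the standard argument behind such a result: symmetrization, Lipschitz contraction to pass from $\cL_f-\cL_g$ to $f-g$, a chaining step under the subgaussian assumption to reach the Gaussian mean width, and a subgaussian-increment concentration bound to obtain the deviation term $u\,d_{L_2}(F')$. This is exactly the line of proof one finds in \cite{pierre2017estimation}, so your approach is essentially the intended one.

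One small clarification worth making explicit in your write-up: the concentration step does not require a separate ``Borell--TIS analogue'' on top of the expectation bound. Once you establish that the process $f\mapsto \sqrt N(P-P_N)\cL_f$ has subgaussian increments with scale $cLL_0\|f-g\|_{L_2}$, the generic chaining tail bound (e.g.\ Theorem~2.2.27 in \cite{MR3184689} or Dirksen's version) delivers both the expectation bound $\lesssim LL_0\,w(F')$ and the deviation $\lesssim LL_0\,u\,d_{L_2}(F')$ in a single stroke, via the $\gamma_2$ functional and its comparison with the Gaussian width under subgaussianity. Presenting it this way avoids the appearance that two separate heavy tools are needed. As you note, the specific constant $16$ is a matter of bookkeeping through symmetrization ($\times 2$), contraction ($\times 2$), and the chaining comparison; there is nothing delicate hiding there.
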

It follows from Lemma~\ref{lem:subgauss} that for any $u>0$, with probability larger that $1-2\exp(-u^2)$,
\begin{align*}
\sup_{f \in  F \cap (f^{*} + \mathcal{B}) } \big | (P-P_N)\cL_f  \big|  &\leqslant \sup_{f,g \in F\cap(f^{*} +  \mathcal{B}) } \big | (P-P_N)(\cL_f-\cL_g)  \big| \\
& \leqslant \frac{16LL_0}{\sqrt{N}} \big(   w(F \cap(f^* +   \mathcal{B}) ) + ud_{L_2} (F\cap(f^* +  \mathcal{B}) )  \big)\enspace.
\end{align*}
It is clear that $d_{L_2} (F\cap (f^*  +  \mathcal{B}) ) \leqslant r(\rho^{*})$. By definition of the complexity function~\eqref{def:function_r}, for $u = \theta \sqrt{N} r(\rho^{*} )/(32LL_0) $, we have with probability at least $1-2\exp\big(-\theta^2N r^2(\rho^{*} ) /(32LL_0 )^2 \big)$,
\begin{gather*}
\forall f\in F\cap(f^*+ \mathcal{B} ),\qquad \big | (P-P_N) \cL_f \big|  \leq \theta  r^2(\rho^{*} )\enspace.
\end{gather*}

\section{Proof Theorem~\ref{main:theo}} \label{proof_mom}
All along the proof, the following notations will be used repeatedly.
\begin{gather*}
 \theta = \frac{1}{34A}, \quad \gamma = \theta /( 192 L ) \quad \hat{f} = \hat f_{K,\lambda}  \enspace.
\end{gather*}

The proof is divided into two parts. First, we identify an event where the minmax MOM estimator $\hat{f}$ is controlled. 
Then, we prove that this event holds with large probability. 
Let $K \geqslant 7|\cO| /3$,  and $\kappa \in \{1,2\}$ let
\begin{equation*}
C_{K,r,\kappa} = \max \bigg(\frac{96L^2K}{\theta^2 N},r_2^2(\gamma,\kappa \rho^{*}) \bigg) \quad \mbox{and} \quad \lambda = 10\theta \frac{C_{K,r,2}}{\rho^{*} } \enspace.
\end{equation*}
Let $\mathcal{B}_\kappa=\sqrt{C_{K,r,\kappa}}B_{L_2}\cap \kappa \rho^{*}B$. Consider the following event
\begin{equation} \label{omega_mom}
 \Omega_K   =  \bigg \{ \forall \kappa\in\{1,2\},\ \forall f \in F\cap f^*+\mathcal{B}_\kappa,  \;
\sum_{k=1}^KI\bigg(\bigg| (P_{B_k} - P)(\ell_f-\ell_{f^{*}}) \bigg| \leq \theta C_{K,r,\kappa}\bigg)\geqslant \frac K2 \bigg \}
\end{equation}

\subsection{Deterministic argument}

\begin{Lemma}\label{lem:Basic}
$\hat f - f^{*} \in\mathcal{B}_\kappa$ if there exists $\eta>0$ such that
	\begin{align}\label{obj_proof} 
	& \sup_{f \in f^{*} +  F \backslash \mathcal{B}_\kappa } \quad \MOM{K}{\ell_{f^{*}}-\ell_f}  + \lambda \big( \|f^{*}\|-\|f\|  \big)< - \eta \enspace,\\
	& \sup_{f \in F} \MOM{K}{\ell_{f^{*}}-\ell_f} + \lambda \big( \|f^{*}\|-\|f\|  \big) \leq  \eta  \enspace.\label{cond_gen}
	\end{align}

\end{Lemma}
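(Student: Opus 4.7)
The plan is a short argument by contradiction that only uses the definition of $\hat f$ as a minmax MOM minimizer and the anti-symmetry of the median, namely $\MOM{K}{-h} = -\MOM{K}{h}$ for any real-valued $h$ (valid for the usual lower/upper/midpoint median conventions). No concentration or complexity arguments are needed here: this is a purely deterministic, algebraic lemma. In particular, I would not need to touch Assumptions~\ref{assum:lip}–\ref{assum:fast_rates_MOM}; they will enter only later, when one proves that the hypotheses \eqref{obj_proof}–\eqref{cond_gen} hold on the event $\Omega_K$.

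First, I would rewrite \eqref{cond_gen} in the form $\Phi(f^*)\leq \eta$, where
\[
\Phi(f)\;:=\;\sup_{g\in F}\Big(\MOM{K}{\ell_f-\ell_g}+\lambda\big(\|f\|-\|g\|\big)\Big)
\]
is exactly the criterion minimized by $\hat f$ according to \eqref{def:MOM}. Indeed, with $f=f^*$ fixed and $g$ running over $F$, the quantity inside the supremum in \eqref{cond_gen} is literally $\Phi(f^*)$. Since $\hat f\in\argmin_{f\in F}\Phi(f)$, this immediately gives $\Phi(\hat f)\leq \Phi(f^*)\leq \eta$.

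Next, I would lower bound $\Phi(\hat f)$ by choosing the particular test point $g=f^*$ in its defining supremum, and then apply the anti-symmetry of the median to move the sign inside $\MOM{K}{\cdot}$:
\[
\Phi(\hat f)\;\geq\;\MOM{K}{\ell_{\hat f}-\ell_{f^*}}+\lambda\big(\|\hat f\|-\|f^*\|\big)\;=\;-\Big[\MOM{K}{\ell_{f^*}-\ell_{\hat f}}+\lambda\big(\|f^*\|-\|\hat f\|\big)\Big].
\]

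Finally, suppose for contradiction that $\hat f-f^*\notin\mathcal B_\kappa$, i.e.\ $\hat f\in F\setminus(f^*+\mathcal B_\kappa)$. Then hypothesis \eqref{obj_proof} applied at $f=\hat f$ says that the bracket above is strictly less than $-\eta$, whence $\Phi(\hat f)>\eta$. Combined with the bound $\Phi(\hat f)\leq \eta$ from the first step, this is a contradiction, so $\hat f-f^*$ must lie in $\mathcal B_\kappa$.

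The only point that requires any care is making sure the sign-flip $\MOM{K}{\ell_{\hat f}-\ell_{f^*}}=-\MOM{K}{\ell_{f^*}-\ell_{\hat f}}$ is valid under the median convention used in \eqref{MOM}; this is the (mild) main ``obstacle,'' and is handled by taking the standard convention for which the median of $(-x_1,\ldots,-x_K)$ equals $-\mathrm{Med}(x_1,\ldots,x_K)$. Everything else is a one-line reading of the definitions.
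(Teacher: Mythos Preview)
Your proof is correct and follows essentially the same route as the paper: both define the minmax criterion $S(f)=\Phi(f)$, use $S(\hat f)\le S(f^*)\le \eta$ from \eqref{cond_gen}, and then lower bound $S(f)$ at any $f\notin f^*+\mathcal B_\kappa$ by choosing $g=f^*$ and flipping the sign via $\MOM{K}{-h}=-\MOM{K}{h}$ (what the paper calls ``homogeneity of $\mathrm{MOM}_K$''). Your explicit remark about the median convention needed for the sign-flip is a welcome clarification of a point the paper leaves implicit.
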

\begin{proof}
	For any $f\in F$, denote by $S(f)=\sup_{g\in F}\text{MOM}_K[\ell_f-\ell_g] + \lambda \big( \|f\|-\|g\|  \big)$.
	If \eqref{obj_proof} holds, by homogeneity of $\text{MOM}_K$, any $f\in f^{*} + F\backslash \mathcal{B}_\kappa$ satisfies
	\begin{equation}
	S(f)\geqslant \inf_{f\in f^{*} + F \backslash \mathcal{B}_\kappa} \text{MOM}_K[\ell_f-\ell_{f^*}]+ \lambda \big( \|f\|-\|f^{*}\|  \big)> \eta\enspace.\label{eq:Task2} 
	\end{equation}
On the other hand, if \eqref{cond_gen} holds,
	\begin{align} \nonumber
	S(f^{*})= & \sup_{f\in F }\text{MOM}_K[\ell_{f^*}-\ell_f] + \lambda \big( \|f^{*}\|-\|f\|  \big)  \leqslant \eta\enspace.\label{eq:Task1}
	\end{align}
Thus, by definition of $\hat{f}$ and \eqref{cond_gen},
	\[
	S(\hat{f})\leqslant S(f^*)\leqslant \eta\enspace.
	\]
Therefore, if \eqref{obj_proof} and \eqref{cond_gen} hold,
	$\hat{f} \in f^{*} + \mathcal{B}_\kappa$.	
\end{proof}  
It remains to show that, on $\Omega_K$, Equations~\eqref{obj_proof} and~\eqref{cond_gen} hold for $\kappa = 2$.\\

Let $\kappa\in\{1,2\}$ and $f \in  F\cap \mathcal{B}_\kappa $. On $\Omega_K$, there exist more than $K/2$ blocks $B_k$ such that  
\begin{equation} \label{mom_F1}
\bigg| (P_{B_k} - P)(\ell_f-\ell_{f^{*}}) \bigg| \leq \theta C_{K,r,\kappa} \enspace.
\end{equation}
It follows  that
\begin{align*}
\sup_{f \in f^{*} +  F\cap \mathcal{B}_\kappa }  \MOM{K}{\ell_{f^{*}}-\ell_f} \leq \theta C_{K,r,\kappa }
\end{align*}
In addition,  $\|f\|-\|f^{*}\| \leq \kappa \rho^{*}$. Therefore, from the choice of $\lambda$, on $\Omega_K$,  one has
\begin{equation} \label{control_F1}
\sup_{f \in f^* + F \cap \mathcal{B}_\kappa}  \MOM{K}{\ell_{f^{*}}-\ell_f} + \lambda \big( \|f^{*}\|-\|f\| \big) \leq  (1+10\kappa)\theta C_{K,r,\kappa}  \enspace.
\end{equation}

Assume that $f$ belongs to $ F \backslash  \mathcal{B}_\kappa$. By convexity of $F$, there exists $f_0 \in   f^{*} + F \cap \mathcal{B}_\kappa $ and $\alpha > 1$ such that
\begin{equation} \label{homo_argument_MOM}
f = f^{*} + \alpha (f_0 - f^{*}) \enspace. 
\end{equation}
For all $i \in \{1,\cdots,N \}$, let $\psi_i: \mathbb R \rightarrow \mathbb R $ be the random function defined for all $u\in \R$ by 
\begin{equation}\label{eq:fct_psi_MOM}
\psi_i(u) = \ell (u + f^{*}(X_i), Y_i) - \ell (f^{*}(X_i), Y_i)\enspace.
\end{equation}
The functions $\psi_i$ are convex and satisfy $\psi_i(0) = 0$. Thus $\alpha\psi_i(u) \leq \psi_i(\alpha u)$ for all $u\in\mathbb R$ and $\alpha > 1$ and $\psi_i(f(X_i) - f^{*}(X_i) )=  \ell (f(X_i), Y_i) - \ell(f^{*}(X_i), Y_i) $. Hence, for any block $B_k$,  
\begin{align} \label{conv_arg_MOM}
\nonumber P_{B_k} \cL_f & = \frac{1 }{|B_k|} \sum_{i \in B_k} \psi_i \big( f(X_i)- f^{*}(X_i) \big) = \frac{1}{|B_k|} \sum_{i \in B_k}   \psi_i \big(\alpha( f_0(X_i)- f^{*}(X_i) )\big)\\
&\geq \frac{\alpha}{|B_k|} \sum_{i \in B_k}   \psi_i \big( f_0(X_i)- f^{*}(X_i) \big) = \alpha P_{B_k} \cL_{f_0}\enspace.
\end{align}
By the triangular inequality,
\begin{align*}
\norm{f}-\norm{f^*} & = \norm{f^*+\alpha(f_0-f^*)}-\norm{f^*} \geq \alpha(\norm{f_0}-\norm{f^*}).
\end{align*}
Together with \eqref{conv_arg_MOM}, this yields, for all block $B_k$
\begin{equation}\label{eq:homog_final_MOM}
P_{B_k}\cL_f^\lambda \geq \alpha P_{B_k}\cL_{f_0}^{\lambda}\enspace.
\end{equation}
As $f_0 \in F \cap \mathcal{B}_\kappa$, on $\Omega_K$, 
\begin{equation}\label{eq:use_event_omega_MOM}
|(P-P_{B_k})\cL_{f_0}|\leq \theta C_{K,r,\kappa}.
\end{equation}  

As $f_0$ can be chosen in $\partial (f^*+\mathcal{B}_\kappa)$, either: 1) $\norm{f_0-f^*}= \kappa\rho^*$ and $\norm{f_0-f^*}_{L_2}\leq \sqrt{C_{K,r,\kappa}}$ or 2) $\norm{f_0-f^*}_{L_2}= \sqrt{C_{K,r,\kappa}}$ and $\norm{f_0-f^*}\leq \kappa \rho^*$.

Assume first that $\norm{f_0-f^*}= \kappa\rho^*$ and $\norm{f_0-f^*}_{L_2}\leq \sqrt{C_{K,r,\kappa}}$. Since the sparsity equation is satisfied for $\rho = \rho^*$, it is also satisfied for $\kappa \rho^{*}$. By \eqref{eq:homogen_reg},
\begin{equation}\label{eq:homogen_reg_MOM}
\lambda \big( \norm{f_0}-\norm{f^*} \big) \geq 7\lambda \kappa \rho^*/10 = 7\kappa C_{K,r,2} \enspace.
\end{equation}
Therefore, on $\Omega_K$, there are more than $K/2$ blocks $B_k$ where 
\begin{equation}
P_{B_k} \cL_f^\lambda \geq \alpha P_{B_k}\cL_{f_0}^{\lambda} \geq \alpha \bigg( -\theta C_{K,r,\kappa}  + \frac{7\kappa \lambda \rho^{*}}{10} \bigg)  \geqslant \alpha (7\kappa-1) \theta C_{K,r,2} \enspace.
\end{equation}
It follows that 
\begin{equation} \label{homo_1}
\MOM{K}{\ell_f - \ell_{f^*}} + \lambda \big( \|f\|-\|f^{*}\| \big) \geqslant \alpha \theta \big(   7\kappa C_{K,r,2} - C_{K,r,\kappa}  \big)
C_{K,r,2} \enspace.
\end{equation}

Assume that $\norm{f_0-f^*}_{L_2}= \sqrt{C_{K,r,\kappa}}$ and $\norm{f_0-f^*}\leq \kappa  \rho^*$. By Assumption~\ref{assum:fast_rates_MOM}, on $\Omega_K$, there exist more than $K/2$ blocks $B_k$ where
\begin{align*}
P_{B_k} \cL_f^\lambda&\geqslant P_{B_k} \cL_{f_0} - \lambda\norm{f_0-f^*} \geq P\cL_{f_0} + (P_{B_k}-P)\cL_{f_0} - \lambda \kappa \rho^*\\
&\geq A^{-1}\norm{f_0-f^*}_{L_2}^{2}-\theta C_{K,r,\kappa} - \kappa\lambda \rho^* = \theta(33 C_{K,r,\kappa}-10\kappa C_{K,r,2})\enspace.
\end{align*}
It follows that 
\begin{equation} \label{homo_2}
\MOM{K}{\ell_f- \ell_{f^*}} + \lambda \big( \|f\|-\|f^*\| \big) \geqslant  \alpha\theta(33 C_{K,r,\kappa}-10\kappa C_{K,r,2}) \enspace.
\end{equation}

From Equations~\eqref{control_F1},~\eqref{homo_1} and~\eqref{homo_2} with $\kappa = 1 $, it follows that
\begin{equation}\label{eq:supMOM}
\sup_{f \in F} \MOM{K}{\ell_{f^{*}}-\ell_f} + \lambda \big( \|f^{*}\|-\|f\| \big) \leq 11\theta C_{K,r,2}\enspace.
\end{equation}
Therefore,~\eqref{cond_gen} holds with $\eta = 11\theta C_{K,r,2}$. 
Now, Equations~\eqref{homo_1} and~\eqref{homo_2} with $\kappa = 2$ yield
\begin{align*}
\sup_{f \in f^{*} +  F \backslash \mathcal{B}_2 } \MOM{K}{\ell_{f^{*}}-\ell_f} + \lambda \big( \|f^{*}\|-\|f\| \big) & \leqslant-13\alpha\theta C_{K,r,2}  < -11\theta C_{K,r,2} \enspace.
\end{align*}
Therefore, Equation~\eqref{obj_proof} holds with $\eta = 11\theta C_{K,r,2}$. 
Overall, Lemma~\ref{lem:Basic} shows that $\hat{f}\in\mathcal{B}_2$.
On $\Omega_K$, this implies that there exist more than $K/2$ blocks $B_k$ where $P \cL_{\hat{f}} \leq P_{B_k} \cL_{\hat{f}} + \theta C_{K,r,2}$. In addition, by definition of $\hat f$ and \eqref{eq:supMOM}, 
\begin{equation*}
\MOM{K}{\ell_{\hat f} - \ell_{f^{*}}} + \lambda (\|\hat{f}\| - \|f^{*}\| ) \leqslant \sup_{f \in F} \MOM{K}{\ell_{f^{*}} - \ell_{f}} + \lambda (\|f^{*}\| - \|f\| ) \leq 11\theta C_{K,r,2}\enspace.
\end{equation*}
This means that there exist at least $K/2$ blocks $B_k$ where $P_{B_k} \cL_{\hat{f}} + \lambda(\|\hat{f}\| - \|f^*\|) \leqslant 11\theta C_{K,r,2}$.
As $\|\hat{f}\| - \|f^*\|\geqslant -\|\hat{f} - f^*\| \geqslant -2\rho^*$, on these blocks, $P_{B_k}\cL_{\hat f} \leq 31\theta C_{K,r,2}$. Therefore, there exists at least one block $B_k$ for which simultaneously 
$P \cL_{\hat{f}} \leq P_{B_k} \cL_{\hat{f}} + \theta C_{K,r,2}$ and $P_{B_k}\cL_{\hat f} \leq 31\theta C_{K,r,2}$. This shows that
 $P\cL_{\hat{f}} \leq 32\theta C_{K,r,2} \leq A^{-1} C_{K,r,2}$.

\subsection{Control of the stochastic event} \label{sec:mom}

\begin{Proposition}\label{lem:close}
	Grant Assumptions~\ref{assum:lip},~\ref{assum:convex},~\ref{assum:moments} and~\ref{assum:fast_rates_MOM}. Let $K \geq 7|\cO|/3$. Then $\Omega_K$ holds with probability larger than $1-2\exp(- K/504)$.
\end{Proposition}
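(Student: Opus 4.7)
The proof proceeds by analyzing each of the two sub-events (one per $\kappa\in\{1,2\}$) separately and combining them with a union bound. Fix $\kappa\in\{1,2\}$ and abbreviate $C = C_{K,r,\kappa}$, $\mathcal{B} = \mathcal{B}_\kappa$, and $W_{f,k}=(P_{B_k}-P)\cL_f$. Call a block $k$ \emph{bad at $f$} if $|W_{f,k}|>\theta C$. Among the $K$ blocks, at most $|\cO|\leq 3K/7$ contain an outlier; treating all such blocks as bad in the worst case, it remains to prove that uniformly over $f\in F\cap(f^*+\mathcal{B})$, the number of inlier-only blocks---call this collection $\mathcal{K}$, with $|\mathcal{K}|\geq 4K/7$---that are bad does not exceed $K/2-|\cO|\geq K/14$, with probability at least $1-\exp(-cK)$.

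To linearize the count I replace the indicator by the $2/(\theta C)$-Lipschitz surrogate $\phi:\mathbb{R}\to[0,1]$ defined by $\phi(t)=0$ for $|t|\leq \theta C/2$, $\phi(t)=1$ for $|t|\geq \theta C$, and affine on the remaining intervals. Since $I(|t|>\theta C)\leq \phi(t)\leq I(|t|>\theta C/2)$, it suffices to bound
\begin{equation*}
Z_\kappa := \sup_{f\in F\cap(f^*+\mathcal{B})} \sum_{k\in\mathcal{K}} \phi(W_{f,k}),
\end{equation*}
and to show that $P(Z_\kappa>K/14)\leq \exp(-cK)$. I estimate $\E Z_\kappa$ in two pieces and then conclude with a block-level concentration inequality.

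For the deterministic piece, Chebyshev's inequality together with the bound $\mathrm{Var}(W_{f,k})\leq L^2\|f-f^*\|_{L_2}^2/|B_k|\leq L^2 C K/N$ (valid for inlier blocks by Assumptions~\ref{assum:lip} and \ref{assum:moments} and by $\|f-f^*\|_{L_2}^2\leq C$) and the floor $C\geq 96L^2K/(\theta^2 N)$ yields $\E\phi(W_{f,k})\leq P(|W_{f,k}|>\theta C/2)\leq 1/24$, hence $\sup_f\sum_{k\in\mathcal{K}}\E\phi(W_{f,k})\leq K/24$. For the random piece, symmetrization across the independent inlier blocks and Ledoux--Talagrand contraction for $\phi$ give
\begin{equation*}
\E\sup_f \sum_{k\in\mathcal{K}}\bigl(\phi(W_{f,k})-\E\phi(W_{f,k})\bigr) \leq \frac{c_0}{\theta C}\,\E\sup_f \Bigl|\sum_{k\in\mathcal{K}}\epsilon_k W_{f,k}\Bigr|.
\end{equation*}
Expanding $W_{f,k}$ as a centered average over $B_k$, a further symmetrization passes from the block-level Rademacher signs $\epsilon_k$ to per-observation signs $\sigma_i$ (at the cost of an absolute constant), and loss contraction for the $L$-Lipschitz loss then produces an upper bound of the form
\begin{equation*}
\frac{c_1 L K}{\theta C N}\,\E\sup_{g\in(F-f^*)\cap \kappa\rho^* B\cap\sqrt{C}\,B_{L_2}}\Bigl|\sum_{i\in\bigcup\mathcal{K}}\sigma_i g(X_i)\Bigr|.
\end{equation*}
Because $|\bigcup\mathcal{K}|\geq 4N/7\geq N/2$, the defining property of $r_2(\gamma,\kappa\rho^*)$ applies on this index set; rescaling via the star-shape of $F-f^*$ from the $L_2$-ball of radius $r_2(\gamma,\kappa\rho^*)$ to the larger one of radius $\sqrt{C}$ costs only a factor $\sqrt{C}/r_2$, yielding a Rademacher complexity bounded by $\gamma r_2\sqrt{C}\,N\leq \gamma C N$. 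Inserting $\gamma=\theta/(192L)$ kills the $L/(\theta C)$ prefactor and leaves a contribution to $\E Z_\kappa$ of size $c_2 K$ with a small absolute constant, so in total $\E Z_\kappa \leq K/\alpha$ for some $\alpha>14$.

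The final step is the bounded differences inequality at the block level: viewing $Z_\kappa$ as a function of the $K$ independent block-vectors $(Z_i)_{i\in B_k}$ and observing that changing one block's data alters $Z_\kappa$ by at most $\sup_f |\phi(W_{f,k})-\phi(W'_{f,k})|\leq 1$, McDiarmid's inequality gives $P(Z_\kappa\geq \E Z_\kappa+t)\leq \exp(-2t^2/K)$. Choosing $t$ so that $\E Z_\kappa+t\leq K/14$ while $2t^2/K$ matches the target rate yields the probability $\exp(-K/504)$ per $\kappa$, and a union bound over $\kappa\in\{1,2\}$ produces the factor $2$ in the statement. The main obstacle is the random-part bound just described: the block-level Rademacher sum $\sum_k \epsilon_k W_{f,k}$ does not immediately decouple into the per-observation Rademacher complexity that defines $r_2$, and the two-step symmetrization together with the star-shape rescaling to the $\sqrt{C}$-ball must be carried out carefully, since it is the precise tracking of the multiplicative constants---combined with the specific numerical values of $\theta$, $\gamma$ and the $96L^2K/(\theta^2 N)$ floor in $C$---that is responsible for the exponent $1/504$ in the final probability estimate.
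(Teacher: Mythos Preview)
Your strategy is exactly the paper's: soft indicator $\phi$, Chebyshev for the pointwise expectation (giving $1/24$), McDiarmid at the block level, then symmetrization plus Ledoux--Talagrand contraction for $\phi$, a second symmetrization to pass from block signs $\epsilon_k$ to per-observation signs $\sigma_i$ (via the distributional identity $\sigma_i\epsilon_k\stackrel{d}{=}\sigma_i$), Lipschitz contraction for the loss, and finally the star-shape rescaling from the $r_2$-ball to the $\sqrt{C}$-ball in the case $C>r_2^2$.

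The one place where your write-up does not close is the choice of threshold. You commit to the \emph{worst-case} target ``bad inlier blocks $\le K/14$'' and then claim $\E Z_\kappa\le K/\alpha$ with $\alpha>14$. With the constants $\theta,\gamma$ fixed by the statement you actually obtain $\E Z_\kappa\le |\cK|/24+|\cK|/24=|\cK|/12\le K/12$, and $12<14$; adding the McDiarmid deviation only makes it worse, so the inequality $\E Z_\kappa+t\le K/14$ cannot be met. The paper avoids this by keeping everything relative to $|\cK|$: it shows bad inlier blocks $\le |\cK|/8$ (equivalently good inlier blocks $\ge 7|\cK|/8$) and only at the very end invokes $|\cK|\ge 4K/7$ to conclude $7|\cK|/8\ge K/2$. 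The McDiarmid probability $1-e^{-|\cK|/288}$ then becomes $1-e^{-K/504}$ via the same inequality. If you redo your bookkeeping in terms of $|\cK|$ rather than aiming for the uniform bound $K/14$, your argument goes through with the stated exponent.
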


\begin{proof}
	Let $\mathcal F = F \cap \left(f^{*} + \mathcal{B}_{\kappa}\right)$  and let $\phi(t) = \mathbb{1} \{ t\geq 2 \} + (t-1) \mathbb{1} \{1 \leq t \leq 2 \}$. This function satisfies $\forall t \in \mathbb{R}^+ \hspace{0.4cm} \mathbb{1} \{ t\geq 2 \} \leq \phi(t) \leq \mathbb{1} \{ t\geq 1 \}$. Let $W_k = ((X_i,Y_i))_{i \in B_k}$ and, for any $f\in\cF$, let $G_f(W_k) = (P_{B_k} - P)(\ell_f-\ell_{f^{*}})$. Let also $C_{K,r,\kappa}=\max \bigg( 96L^2K/(\theta^2 N), r_2^2(\gamma,\kappa\rho^{*}) \bigg) $. For any $f\in\cF$, let
	\begin{align*}
	z(f) & = \sum_{k =1}^K \mathbb{1} \{|G_f(W_k)|\leq \theta C_{K,r,\kappa}\}\enspace.
	\end{align*}
	Proposition \ref{lem:close} will be proved if $z(f)\geqslant K/2$ with probability larger than $1-e^{-K/504}$.
	Let $\mathcal{K}$ denote the set of indices of blocks which have not been corrupted by outliers, $\mathcal{K} = \{k \in \{1,\cdots,K \} : B_k \subset \mathcal{I}\}$, where we recall that $\cI$ is the set of informative data. Basic algebraic manipulations show that
	\begin{multline}\label{eq:LBzf1}
	z(f) \geqslant |\mathcal{K}| - \sup_{f\in \mathcal F} \sum_{k \in \mathcal{K}} \bigg( \phi(2(\theta C_{K,r,\kappa})^{-1} | G_f(W_k)|) - \mathbb{E} \phi(2(\theta C_{K,r,\kappa})^{-1} | G_f(W_k)|) \bigg) \\
	-  \sum_{k \in \mathcal{K} } \mathbb{E}\phi(2(\theta C_{K,r,\kappa})^{-1} | G_f(W_k)|) \enspace.
	\end{multline} 
	The last term in \eqref{eq:LBzf1} can be bounded from below as follows. Let $f\in \cF$ and $k\in \cK$,
	\begin{align*}
	\mathbb{E}\phi(2(\theta C_{K,r,\kappa})^{-1} | G_f(W_k)|) & \leqslant \mathbb{P} \bigg( |G_f(W_k)| \geq \frac{\theta C_{K,r,\kappa}}{2} \bigg) \leqslant \frac{4\mathbb{E}G_f(W_k)^2}{(\theta C_{K,r,\kappa})^2}   \\
	&  \leqslant  \frac{4K^2}{\theta^2 C_{K,r,\kappa}^2N^2} \sum_{i \in B_k} \mathbb{E} [(\ell_f-\ell_{f^{*}})^2(X_i,Y_i)] \leq \frac{4L^2K}{\theta^2 C_{K,r,\kappa}^2N}\|f-f^{*}\|^2_{L_2} \enspace.
	\end{align*}
	The last inequality follows from Assumption \ref{assum:moments}. 
	Since $\|f-f^{*}\|_{L_2} \leq \sqrt{C_{K,r,\kappa}}$,
	\[
	\mathbb{E}\phi(2(\theta C_{K,r,\kappa})^{-1} | G_f(W_k)|)\leqslant \frac{4L^2K}{\theta^2 C_{K,r,\kappa}N}\enspace.
	\]
	As $C_{K,r,\kappa}\geqslant 96L^2K/(\theta^2 N)$,  
	\begin{equation*}
	\mathbb{E}\phi(2(\theta C_{K,r,\kappa})^{-1} | G_f(W_k)|)  \leq \frac{1}{24} \enspace.
	\end{equation*}
	Plugging this inequality in \eqref{eq:LBzf1} yields
	\begin{align}\label{res::1}
	z(f) \geq |\mathcal{K}|(1-\frac1{24}) -\sup_{ f \in \mathcal F
	} \sum_{k \in \mathcal{K}} \bigg( \phi(2(\theta C_{K,r,\kappa})^{-1} | G_f(W_k)|) - \mathbb{E} \phi(2(\theta C_{K,r,\kappa})^{-1} | G_f(W_k)|) \bigg) \enspace.
	\end{align}
	Using the Mc Diarmid's inequality, with probability larger than $1-\exp(- |\cK|/288 )$,
	we get 
	\begin{align*}
	\sup_{f \in \mathcal F} & \sum_{k \in \mathcal{K}} \bigg( \phi(2(\theta C_{K,r,\kappa})^{-1} | G_f(W_k)|)  - \mathbb{E} \phi(2(\theta C_{K,r,\kappa})^{-1} | G_f(W_k)|) \bigg)   \\
	& \leq  \frac{|\mathcal{K}|}{24} + \mathbb{E} \sup_{f \in \mathcal F}   \sum_{k \in \mathcal{K}} \bigg( \phi(2(\theta C_{K,r,\kappa})^{-1} | G_f(W_k)|) - \mathbb{E}  \phi(2(\theta C_{K,r,\kappa})^{-1} | G_f(W_k)|) \bigg)\enspace.
	\end{align*}
By the symmetrization lemma, it follows that, with probability larger than $1-\exp(- |\cK|/288 )$,

	\begin{multline*}
	\sup_{f \in \mathcal F}  \sum_{k \in \mathcal{K}} \bigg( \phi(2(\theta C_{K,r,\kappa})^{-1} | G_f(W_k)|)  - \mathbb{E} \phi(2(\theta C_{K,r,\kappa})^{-1} | G_f(W_k)|) \bigg) \\
	\leqslant \frac{|\mathcal{K}|}{24} + 2 \mathbb{E} \sup_{f \in \mathcal F }   \sum_{k \in \mathcal{K}} \sigma_k \phi(2(\theta C_{K,r,\kappa})^{-1} | G_f(W_k)|) \enspace.
	\end{multline*}
As $\phi$ is 1-Lipschitz with $\phi(0)=0$, the contraction lemma from \cite{ledoux2013probability}and yields 
	\begin{align*}
	\sup_{f \in \mathcal F} \sum_{k \in \cK} \bigg( \phi(2(\theta C_{K,r,\kappa})^{-1} | G_f(W_k)|) -  &\mathbb{E} \phi(2(\theta C_{K,r,\kappa})^{-1} | G_f(W_k)|) \bigg)  \\
	& \leqslant \frac{|\mathcal{K}|}{24} + \frac{4}{\theta} \mathbb{E}  \sup_{f \in \mathcal F}  \quad  \sum_{k \in \mathcal{K}}  \sigma_k \frac{ G_f(W_k)}{C_{K,r,\kappa}}  \\
	& = \frac{|\mathcal{K}|}{24} + \frac{4}{\theta} \mathbb{E} \sup_{f \in \mathcal F} \quad \sum_{k \in \mathcal{K}}  \sigma_k \frac{(P_{B_k}- P)(\ell_f-\ell_{f^{*}})}{C_{K,r,\kappa}}  \\
	\end{align*}
	For any $k\in \cK$, let $(\sigma_i)_{i \in B_k}$ independent from $(\sigma_k)_{k \in \mathcal{K}}$, $(X_i)_{i \in \cI}$ and $(Y_i)_{i \in \cI}$. The vectors  $(\sigma_i \sigma_k (\ell_f-\ell_{f^{*}})(X_i,Y_i))_{i,f}$ and $(\sigma_i (\ell_f-\ell_{f^{*}})(X_i,Y_i))_{i,f}$ have the same distribution. Thus, by the symmetrization and contraction lemmas, with probability larger than $1-\exp(- |\cK|/288)$,
	\begin{align}
\notag	\sup_{f \in \mathcal F } \sum_{k \in \mathcal{K}} \bigg( \phi(2C_{K,r,\kappa}^{-1} | G_f(W_k)|)& -  \mathbb{E} \phi(2C_{K,r,\kappa}^{-1} | G_f(W_k)|) \bigg)  \\
\notag	&  \leq \frac{|\mathcal{K}|}{24}+ \frac{8}{\theta} \mathbb{E}  \sup_{f \in \mathcal F}  \quad \sum_{k \in \mathcal{K}} \frac{1}{|B_k|} \sum_{i \in B_k} \sigma_i \frac{ (\ell_f-\ell_{f^{*}})(X_i,Y_i)}{C_{K,r,\kappa}}   \\ 
\notag	& = \frac{|\mathcal{K}|}{24} + \frac{8K}{\theta N} \mathbb{E} \sup_{f \in \mathcal F} \quad \sum_{i \in \cup_{k \in \mathcal{K}} B_k}  \sigma_i  \frac{(\ell_f-\ell_{f^{*}})(X_i,Y_i)}{C_{K,r,\kappa}} \\
\label{eq:ConcB}	& \leq \frac{|\mathcal{K}|}{24} + \frac{8LK}{\theta N} \mathbb{E} \sup_{f \in \mathcal F} \bigg| \sum_{i \in \cup_{k \in \mathcal{K}} B_k}  \sigma_i \frac{(f-f^{*})(X_i)}{C_{K,r,\kappa}} \bigg|\enspace.
	\end{align}
	Now either 1) $K \leq  \theta^2 r_2^2(\gamma,\kappa\rho^{*})N /(96L^2)$ or 2) $K >  \theta^2 r_2^2(\gamma,\kappa\rho^{*})N /(96L^2)$. 
	Assume first that $K \leq  \theta^2 r_2^2(\gamma,\kappa\rho^{*})N /(96L^2)$, so $C_{K,r,\kappa} = r_2^2(\gamma,\kappa\rho^{*})$ and by definition of the complexity parameter 
	\begin{align*}
	\mathbb{E} \sup_{f \in \mathcal F } \bigg| & \sum_{i \in \cup_{k \in \mathcal{K}} B_k}  \sigma_i \frac{(f-f^{*})(X_i)}{C_{K,r,\kappa}} \bigg| =  \mathbb{E} \sup_{f \in \mathcal F } \quad \frac{1}{r_2^2(\gamma,\kappa\rho^{*})} \bigg| \sum_{i \in \cup_{k \in \mathcal{K}} B_k}  \sigma_i  (f-f^{*})(X_i)\bigg| \leq \frac{\gamma |\cK| N}{K}\enspace.
	\end{align*}
	If $K >  \theta^2 r_2^2(\gamma,\kappa\rho^{*})N /(96L^2)$, $C_{K,r,\kappa} = 96L^2K/( \theta^2 N)$. Write $\cF=\cF_1\cup\cF_2$, where 
	\[
	\cF_1:=\{f\in \cF:\norm{f-f^*}_{L_2}\leqslant r_2(\gamma,\kappa\rho^{*})\},\qquad \cF_2=\cF\setminus\cF_1\enspace.
	\]
	Then,
	\begin{multline*}
	\mathbb{E} \sup_{f \in \mathcal F} \bigg|  \sum_{i \in \cup_{k \in \mathcal{K}} B_k}  \sigma_i \frac{(f-f^{*})(X_i)}{C_{K,r,\kappa}} \bigg|\\
	 =\frac1{C_{K,r,\kappa}}\E \bigg[\sup_{f \in \mathcal F_1} \bigg|  \sum_{i \in \cup_{k \in \mathcal{K}} B_k}  \sigma_i(f-f^{*})(X_i) \bigg|\vee \sup_{f \in \mathcal F_2} \bigg|  \sum_{i \in \cup_{k \in \mathcal{K}} B_k}  \sigma_i(f-f^{*})(X_i) \bigg| \bigg]\enspace.
	\end{multline*}
For any $f\in \cF_2$, $g=f^*+(f-f^*)r_2(\gamma,\kappa\rho^{*})/\sqrt{C_{K,r,\kappa}}\in \cF_1$ and 
\[
\bigg|  \sum_{i \in \cup_{k \in \mathcal{K}} B_k}  \sigma_i(f-f^{*})(X_i) \bigg|=\frac{\sqrt{C_{K,r,\kappa}}}{r_2(\gamma,\kappa\rho^{*})}\bigg|  \sum_{i \in \cup_{k \in \mathcal{K}} B_k}  \sigma_i(g-f^{*})(X_i) \bigg|\enspace.
\]
It follows that 
\[
 \sup_{f \in \mathcal F_2} \bigg|  \sum_{i \in \cup_{k \in \mathcal{K}} B_k}  \sigma_i(f-f^{*})(X_i) \bigg|\leqslant \frac{\sqrt{C_{K,r,\kappa}}}{r_2(\gamma,\kappa\rho^{*})}\sup_{f \in \mathcal F_1} \bigg|  \sum_{i \in \cup_{k \in \mathcal{K}} B_k}  \sigma_i(f-f^{*})(X_i) \bigg|\enspace.
\]
Hence,
\[
\mathbb{E} \sup_{f \in \mathcal F} \bigg|  \sum_{i \in \cup_{k \in \mathcal{K}} B_k}  \sigma_i \frac{(f-f^{*})(X_i)}{C_{K,r,\kappa}} \bigg|\leqslant \frac1{r_2(\gamma,\kappa\rho^*)\sqrt{C_{K,r,\kappa}}}\mathbb{E} \sup_{f \in \mathcal F_1} \bigg|  \sum_{i \in \cup_{k \in \mathcal{K}} B_k}  \sigma_i (f-f^{*})(X_i) \bigg|\enspace.
\]
By definition of $r_2$, this implies
\[
\mathbb{E} \sup_{f \in \mathcal F} \bigg|  \sum_{i \in \cup_{k \in \mathcal{K}} B_k}  \sigma_i \frac{(f-f^{*})(X_i)}{C_{K,r,\kappa}} \bigg|\leqslant \frac{r_2(\gamma,\kappa\rho^*)}{\sqrt{C_{K,r,\kappa}}}\frac{\gamma |\cK| N}{K}\leqslant \frac{\gamma |\cK| N}{K}\enspace.
\]
Plugging this bound in \eqref{eq:ConcB} yields, with probability larger than $1-e^{-|\cK|/288}$
\[
\sup_{f \in \mathcal F } \sum_{k \in \mathcal{K}} \bigg( \phi(2C_{K,r,\kappa}^{-1} | G_f(W_k)|) -  \mathbb{E} \phi(2C_{K,r,\kappa}^{-1} | G_f(W_k)|) \bigg)\leqslant |\mathcal{K}|\bigg(\frac{1}{24}+\frac{8L\gamma}{\theta }\bigg)=\frac{|\cK|}{12}\enspace.
\]
Plugging this inequality into \eqref{res::1} shows that, with probability at least $1-e^{-|\cK|/288}$,
\[
z(f)\geqslant \frac{7|\cK|}8\enspace.
\]
As $K\geqslant 7|\cO|/3$, $|\cK|\geqslant K-|\cO|\geqslant 4K/7$, hence, $z(f)\geqslant K/2$ holds with probability at least $1-e^{-K/504}$. Since it has to hold for any $\kappa$ in $\{ 1,2 \}$, the final probablity is $1-2e^{-K/504}$.
\end{proof}

\section{Proof Theorem~\ref{theo:main_without_bernstein_cond}} \label{sec:MOM_wtb}
The proof is very similar to the one of Theorem \ref{main:theo}. We only present the different arguments we use coming from the localization with the excess risk. The proof is split into two parts. First we identify an event $\bar{\Omega}_K$ in the same way is $\Omega_K $ in~\eqref{omega_mom} where the $L_2$-localization is replaced by the excess risk localization. For $\kappa \in \{1,2\}$ let $\cB_{\kappa} = \{  f \in E: \: P\cL_f \leq \bar{r}^2(\gamma, \kappa \rho^*), \; \|f-f^*\| \leq  \ \kappa \rho^*\}$ and 
\begin{align*} 
& \bar{\Omega}_K = \bigg\{ \forall \kappa \in \{1,2\}, \forall f \in F\cap \cB_{\kappa}, \sum_{k=1}^K I \big\{ |(P_{B_k} - P) \cL_f |  \leq \frac{1}{20}\bar{r}^2(\gamma,2\rho^*) \big\} \geq K/2 \bigg\} \\
\end{align*} 
Let us us the following notations,
\begin{align*}
\lambda = \frac{11\bar r^2(\gamma,2\rho^*)}{ 40 \rho^*}, \quad \hat f = \hat{f}_{K}^{\lambda} \quad \mbox{and}  \quad \gamma = 1/3840L 
\end{align*}
Finally recal that the complexity parameter is defined as
\begin{equation*}
 \bar{r}(\gamma,\rho)  = \inf \bigg\{ r > 0: \max \bigg(\frac{E(r,\rho)}{\gamma}, \sqrt{384000}V_K(r,\rho) \bigg)\leq r^2 \bigg\}
\end{equation*}
where
\begin{align*}
& E(r,\rho) = \sup_{J \subset \mathcal{I}:|J|\geq N/2}\mathbb{E}\sup_{f \in F: P\cL_f \leq r^2, \; \|f-f^*\| \leq \rho} \bigg | \frac{1}{ |J|}\sum_{i \in J} \sigma_i(f-f^{*})(X_i)   \bigg |\\
& V_K(r,\rho) =\max_{i\in\cI}\sup_{f \in F: P\cL_f \leq r^2, \; \|f-f^*\| \leq \rho}\left(\sqrt{\mathbb{V}ar_{P_i}(\cL_f)}\right) \sqrt{\frac{K}{N}}
\end{align*}
First, we show that on the event $\bar{\Omega}_K$, $P\cL_{\hat f} \leq \bar{r}^2(\gamma, 2\rho^*)$ and $\|f-f^*\| \leq 2\rho^*$. Then we will control the probability of $\bar{\Omega}_K$.

\begin{Lemma}
	Grant Assumptions~\ref{assum:lip} and~\ref{assum:convex}. Let $\rho^*$ satisfy the sparsity equation from Definition~\ref{def:SEMOM2}. On the event $\bar{\Omega}_K$, $P\cL_{\hat f} \leq \bar{r}^2(\gamma, 2\rho^*)$ and $\|f-f^*\| \leq 2\rho^*$.
\end{Lemma}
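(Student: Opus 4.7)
The plan is to follow the same two-step skeleton as in the proof of Theorem~\ref{main:theo}: first reduce to an inequality that holds on the boundary of $f^*+\mathcal{B}_\kappa$ by a homogeneity argument, then verify that inequality using the deviation bounds built into $\bar\Omega_K$. Concretely I would invoke Lemma~\ref{lem:Basic} (whose statement is purely algebraic and does not depend on the Bernstein condition) with the new balls $\mathcal{B}_\kappa = \{ f\in E:P\cL_f\leq \bar r^2(\gamma,\kappa\rho^*),\ \|f-f^*\|\leq \kappa\rho^*\}$ for $\kappa\in\{1,2\}$. It suffices to show that on $\bar\Omega_K$ there is some $\eta>0$ such that
\begin{equation*}
\sup_{f\in F\setminus(f^*+\mathcal{B}_2)}\MOM{K}{\ell_{f^*}-\ell_f}+\lambda(\|f^*\|-\|f\|)<-\eta
\quad\text{and}\quad
\sup_{f\in F}\MOM{K}{\ell_{f^*}-\ell_f}+\lambda(\|f^*\|-\|f\|)\leq \eta.
\end{equation*}

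For the first step I would reuse the convexity/homogeneity trick from \eqref{conv_arg_MOM}--\eqref{eq:homog_final_MOM}: any $f\in F\setminus(f^*+\mathcal{B}_\kappa)$ writes as $f^*+\alpha(f_0-f^*)$ with $\alpha>1$ and $f_0\in F\cap\partial(f^*+\mathcal{B}_\kappa)$, so $P_{B_k}\cL_f^\lambda\geq \alpha P_{B_k}\cL_{f_0}^\lambda$ for every block $B_k$ and the proof reduces to controlling $P_{B_k}\cL_{f_0}^\lambda$ at the boundary. The boundary splits into two cases: (a) $\|f_0-f^*\|=\kappa\rho^*$ with $P\cL_{f_0}\leq \bar r^2(\gamma,\kappa\rho^*)$, or (b) $P\cL_{f_0}=\bar r^2(\gamma,\kappa\rho^*)$ with $\|f_0-f^*\|\leq\kappa\rho^*$. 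In case (a) I would invoke Definition~\ref{def:SEMOM2} (extended to $\kappa\rho^*$ exactly as at \eqref{eq:homogen_reg}) to obtain $\lambda(\|f_0\|-\|f^*\|)\geq 7\lambda\kappa\rho^*/10$, combine with $P_{B_k}\cL_{f_0}\geq -\bar r^2(\gamma,2\rho^*)/20$ (which holds on at least $K/2$ blocks by definition of $\bar\Omega_K$ since $P\cL_{f_0}\geq 0$), and plug in $\lambda=11\bar r^2(\gamma,2\rho^*)/(40\rho^*)$. In case (b) I would instead use that on at least $K/2$ blocks $P_{B_k}\cL_{f_0}\geq \bar r^2(\gamma,\kappa\rho^*)-\bar r^2(\gamma,2\rho^*)/20$, and bound the regularization term from below by $-\kappa\lambda\rho^*$. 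Choosing $\eta$ proportional to $\bar r^2(\gamma,2\rho^*)$, the constants should work out so that for $\kappa=2$ the right-hand side is strictly below $-\eta$, while for $\kappa=1$ (which gives the matching upper bound by the symmetric argument around $f^*$) it stays below $\eta$.

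Once these two inequalities are established, Lemma~\ref{lem:Basic} gives $\hat f\in f^*+\mathcal{B}_2$, which directly yields $\|\hat f-f^*\|\leq 2\rho^*$. To obtain the excess risk bound I would replicate the final \emph{intersection-of-halves} argument from the end of the proof of Theorem~\ref{main:theo}: by definition of $\hat f$ and the second sup bound above, on at least $K/2$ blocks one has $P_{B_k}\cL_{\hat f}+\lambda(\|\hat f\|-\|f^*\|)\leq \eta$, which combined with $\|\hat f\|-\|f^*\|\geq -2\rho^*$ gives an explicit constant-times-$\bar r^2(\gamma,2\rho^*)$ upper bound on $P_{B_k}\cL_{\hat f}$; on at least $K/2$ blocks the defining event $\bar\Omega_K$ also gives $P\cL_{\hat f}\leq P_{B_k}\cL_{\hat f}+\bar r^2(\gamma,2\rho^*)/20$, and by a pigeonhole the two collections of blocks overlap, yielding $P\cL_{\hat f}\leq \bar r^2(\gamma,2\rho^*)$ up to absolute constants.

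The main obstacle will be the bookkeeping of constants. Without the Bernstein condition there is no way to convert the quadratic $L_2$ term into the excess risk, so one must ensure that the sparsity-induced term $7\lambda\kappa\rho^*/10$ alone dominates the allowed empirical deviation $\bar r^2(\gamma,2\rho^*)/20$ in case (a), and that the excess risk gap $\bar r^2(\gamma,\kappa\rho^*)$ dominates both the deviation and $\kappa\lambda\rho^*$ in case (b); the specific choice $\lambda=11\bar r^2(\gamma,2\rho^*)/(40\rho^*)$ in the statement is what makes both balance, and one must verify the inequalities cleanly for $\kappa=1$ (to bound $\sup_f$) and $\kappa=2$ (to rule out exiting $\mathcal{B}_2$) using the same $\eta$. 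Everything else is a direct transcription of the arguments given for Theorem~\ref{main:theo}, with $C_{K,r,\kappa}$ replaced everywhere by $\bar r^2(\gamma,\kappa\rho^*)$ and the localizing $L_2$-ball replaced by the excess-risk sublevel set.
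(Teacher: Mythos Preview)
Your approach matches the paper's almost exactly: homogeneity reduction to the boundary of $\mathcal{B}_\kappa$, the same two boundary cases, the sparsity equation in the norm case, and the comparison of the $\kappa=1$ upper bound with the $\kappa=2$ lower bound via Lemma~\ref{lem:Basic}. The one redundancy is your final ``intersection-of-halves'' step for the excess risk bound: it is unnecessary here because, unlike in Theorem~\ref{main:theo} where $\mathcal{B}_\kappa$ is an $L_2$-ball and a separate argument is needed to pass to $P\cL_{\hat f}$, your $\mathcal{B}_2$ is \emph{defined} by the constraint $P\cL_f\leq \bar r^2(\gamma,2\rho^*)$, so once Lemma~\ref{lem:Basic} gives $\hat f\in\mathcal{B}_2$ both conclusions of the lemma are immediate. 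Your extra step would only give the excess risk bound up to a multiplicative constant, whereas the direct reading gives it exactly.
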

\begin{proof}
	Let $f \in F \backslash \cB_{\kappa}$. From Lemma 6 in \cite{ChiLecLer:2018} there exist $f_0 \in F$ and $\alpha >0$ such that $f-f^* = \alpha(f_0-f^*)$ and $f_0 \in \partial \cB_{\kappa}$. By definition of $\cB_{\kappa}$, either 1)$P\cL_{f_0} = \bar{r}^2(\gamma, \kappa\rho^*)$ and $\|f_0-f^*\| \leq \kappa \rho^*$ or 2) $P\cL_{f_0} \leq \bar{r}^2(\gamma, \kappa \rho^*)$ and $\|f_0-f^*\| = \kappa \rho^*$.\\
	
	Assume that $P\cL_{f_0} = \bar{r}^2(\gamma, \kappa \rho^*)$ and $\|f_0-f^*\| \leq \kappa \rho^*$. On $\bar{\Omega}_K$, there exist at least $K/2$ blocks $B_k$ such that $P_{B_k} \cL_{f_0} \geq P\cL_{f_0} - (1/20) \bar{r}^2(\gamma,\kappa \rho^*) = (19/20)\bar{r}^2(\gamma,\kappa\rho^*)$. It follows that, on at least $K/2$ blocks $B_k$ 
	\begin{equation} \label{eq1_wtb}
	P_{B_k} \cL_{f}^{\lambda} \geq \alpha P_{B_k} \cL_{f_0}^{\lambda} = \alpha \big(P_{B_k} \cL_{f_0} + \lambda (\|f_0\| - \|f^*\|) \big) \geq (19/20) \bar{r}^2(\gamma,\kappa \rho^*) -  11 \kappa  \bar{r}^2(\gamma,2 \rho^*)/40
	\end{equation}
	
	Assume that $P\cL_{f_0} \leq \bar{r}^2(\gamma, \kappa \rho^*)$ and $\|f_0-f^*\| = \kappa \rho^*$. From the sparsity equation defined in Definition~\ref{def:SEMOM2} we get $\|f_0\| - \|f^*\| \geq 7\kappa \rho^* /10$. And on more than $K/2$ blocks $B_k$ 
	
	\begin{equation} \label{eq2_wtb}
	P_{B_k} \cL_{f}^{\lambda} \geq  -(1/20) \bar{r}^2(\gamma,\kappa \rho^*) + 7\lambda \kappa \rho^* /10 = -(1/20) \bar{r}^2(\gamma,\kappa \rho^*) + 77 \kappa \bar{r}^2(\gamma,2 \rho^*)/ 400
	\end{equation}
	
	Now let us consider $f \in F \cap \cB_{\kappa}$. On $\bar{\Omega}_K$, there exist at least $K/2$ blocks $B_k$ such that 
	\begin{equation} \label{eq3_wtb}
	P_{B_k} \cL_{f}^{\lambda} \geq  -(1/20) \bar{r}^2(\gamma,\kappa \rho^*) - \lambda\kappa \rho^* = -(1/20)  \bar{r}^2(\gamma,\kappa \rho^*) - 11 \kappa \bar{r}^2(\gamma,2 \rho^*)/40
	\end{equation}

	As Equations~\eqref{eq1_wtb},~\eqref{eq2_wtb} and ~\eqref{eq3_wtb} hold for more than $K/2$ blocks it follows for $\kappa = 1$ that
	\begin{equation}\label{eq4_wtb}
	\sup_{f\in F} \MOM{K}{\ell_{f^{*}}-\ell_f} + \lambda (\|f^*\| - \|f\|) \leq (13/40)\bar{r}^2(\gamma,2 \rho^*) \enspace.
	\end{equation} 
	From Equations~\eqref{eq1_wtb},~\eqref{eq2_wtb} and ~\eqref{eq3_wtb} with $\kappa = 2$ we get 
	
	\begin{equation}\label{eq5_wtb}
	\sup_{f \in  F \backslash \cB_{2}}   \hspace{0.2cm} \MOM{K}{\ell_{f^{*}}-\ell_f} + \lambda(\|f^*\| -\|f\|) < (13/40)\bar{r}^2(\gamma,2 \rho^*) \enspace.
	\end{equation}
	From Equations~\eqref{eq4_wtb} and~\eqref{eq5_wtb} and a slight modification of Lemma~\ref{lem:Basic} it easy to see that on $\bar{\Omega}_K$, $P\cL_{\hat f} \leq \bar{r}^2(\gamma,2\rho^*)$ and $\|f-f^*\| \leq \rho^*$.
\end{proof}
\begin{Proposition} \label{prop_wtb}
	Grant Assumptions~\ref{assum:lip},~\ref{assum:convex} and~\ref{assum:moments2}. Then $\bar{\Omega}_K$ holds with probability larger than $1- 2\exp(-cK)$
\end{Proposition}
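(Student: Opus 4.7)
The plan is to follow the blueprint of Proposition~\ref{lem:close}, but with two essential modifications: the $L_2$-localization is replaced by an excess-risk localization, and the key variance estimate needed to control the mean of the smoothed indicators will come from the $V_K$-term built into the definition of $\bar r(\gamma,\rho)$ (rather than from the Bernstein condition, which is no longer available). Since $\bar r(\gamma,\cdot)$ is non-decreasing, $\cB_1\subset \cB_2$, so it suffices to establish the uniform bound only over $\cF := F\cap \cB_2$ with threshold $T=\bar r^2(\gamma,2\rho^*)/20$; the case $\kappa=1$ then comes for free. Let $\cK$ denote the set of uncorrupted blocks; since $K\geq 7|\cO|/3$, $|\cK|\geq 4K/7$.

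First, for each $f\in \cF$ and each $k\in\cK$, I would set $G_f(W_k)=(P_{B_k}-P)\cL_f$ and use the $1$-Lipschitz function $\phi$ of the original proof to pass to a smooth indicator, yielding
\[
z(f)\;\geq\;|\cK|-\sum_{k\in\cK}\E\phi\!\left(\tfrac{2|G_f(W_k)|}{T}\right)-\sup_{f\in\cF}\sum_{k\in\cK}\Big(\phi\!\left(\tfrac{2|G_f(W_k)|}{T}\right)-\E\phi\!\left(\tfrac{2|G_f(W_k)|}{T}\right)\Big).
\]
Bounding the deterministic sum is the step where the new complexity comes in: by Markov and Assumption~\ref{assum:moments2},
\[
\E\phi\!\left(\tfrac{2|G_f(W_k)|}{T}\right)\;\leq\; \frac{4\,\E[G_f(W_k)^2]}{T^2}\;\leq\; \frac{4\,V_K^2(\bar r(\gamma,2\rho^*),2\rho^*)}{T^2},
\]
because $\E[G_f(W_k)^2]\leq (K/N)\max_{i\in\cI}\Var_{P_i}(\cL_f)\leq V_K^2\cdot (N/K)\cdot (K/N)=V_K^2$ on $\cF$. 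The definition of $\bar r$ forces $\sqrt{384000}\,V_K(\bar r,2\rho^*)\leq \bar r^2$, so $\E\phi\leq 1/240$; this is the point where the extra variance term in~\eqref{eq:comp_par_local_excess_loss} is indispensable.

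For the fluctuation term, I would apply McDiarmid (each block changes the supremum by at most $1$), then the symmetrization lemma, then Ledoux--Talagrand contraction twice (once for $\phi$ and once for the Lipschitz loss $\cL_f$), exactly as in~\eqref{eq:ConcB} of the proof of Proposition~\ref{lem:close}. After the standard trick of bringing independent Rademacher signs inside each block, the process is reduced to
\[
\frac{cL K}{T\, N}\, \E\sup_{f\in \cF}\Big|\sum_{i\in \cup_{k\in\cK}B_k}\sigma_i (f-f^*)(X_i)\Big|\;\leq\; \frac{cL K}{T\,N}\cdot |\cK|\frac{N}{K}\cdot E(\bar r(\gamma,2\rho^*),2\rho^*),
\]
using the definition of $E(r,\rho)$ with $J=\cup_{k\in\cK}B_k$ (for which $|J|\geq 4N/7\geq N/2$). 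Plugging in $E(\bar r,2\rho^*)\leq \gamma \bar r^2$ and $T=\bar r^2/20$, the chosen $\gamma=1/(3840 L)$ makes this bound at most $|\cK|/12$ (up to the absolute constants, which is precisely what motivates the numerical constants in Definition~\eqref{eq:comp_par_local_excess_loss}).

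Collecting the two bounds yields $z(f)\geq |\cK|\bigl(1-1/240-1/12\bigr)\geq 7|\cK|/8\geq (7/8)(4K/7)=K/2$ with probability at least $1-e^{-c|\cK|}\geq 1-e^{-cK}$ (the deviation coming from McDiarmid applied to the bounded supremum). A union bound over $\kappa\in\{1,2\}$ produces the factor $2$. The only genuinely new aspect is the variance control via $V_K$; once this is set up correctly, everything else is a translation of the $L_2$-localized argument into the excess-risk-localized one, so the main obstacle to watch for is simply to track the numerical constants so that the target threshold $T=\bar r^2(\gamma,2\rho^*)/20$ in $\bar\Omega_K$ is achieved without any loss.
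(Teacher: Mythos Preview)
Your proposal is correct and follows essentially the same route as the paper's sketch: smooth the indicator with $\phi$, control the mean via the $V_K$-term in the definition of $\bar r$ (this is the only new ingredient compared to Proposition~\ref{lem:close}), and handle the fluctuation via McDiarmid plus two rounds of symmetrization/contraction, reducing to the complexity $E(\bar r,2\rho^*)\leq \gamma\bar r^2$. Your observation that $\cB_1\subset\cB_2$ (by monotonicity of $\bar r(\gamma,\cdot)$) together with the fixed threshold $\bar r^2(\gamma,2\rho^*)/20$ in $\bar\Omega_K$ is a clean simplification over the paper, which treats the two values of $\kappa$ separately; note that it also makes your final union bound over $\kappa\in\{1,2\}$ unnecessary, so the factor~$2$ is not actually needed in your version.
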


\textit{Sketch of proof.} The proof of Proposition~\ref{prop_wtb} follows the same line as the one of Proposition~\ref{lem:close}. Let us precise the main differences. For all $f\in F \cap \cB_{\kappa}$ we set, $z^\prime(f) = \sum_{k=1}^K I\{|G_f(W_k)|\leq (1/20)\bar{r}^2(\gamma,\kappa \rho^*) \}$ where $G_f(W_k)$ is the same quantity as in the proof of  Proposition~\ref{lem:close}. Let us consider the contraction $\phi$ introduced in Proposition~\ref{lem:close}. By definition of $V_K(r)$ and $\bar{r}^2(\gamma,\kappa \rho^*)$ we have
\begin{align*}
\mathbb{E}\phi(40 & | G_f(W_k)|/\bar{r}^2(\gamma,\kappa \rho^*))  \leq \mathbb{P} \bigg( |G_f(W_k)| \geq \frac{\bar{r}^2(\gamma,\kappa \rho^*)}{40} \bigg) \leq \frac{(40)^2}{\bar{r}^4(\gamma,\kappa \rho^*)} \mathbb{E}G_f(W_k)^2   \\
&=  \frac{(40)^2}{\bar{r}^4(\gamma,\kappa \rho^*)} \mathbb{V}ar (P_{B_k}\cL_f)   \leq  \frac{(40)^2 K^2}{\bar{r}^4(\gamma,\kappa \rho^*)N^2} \sum_{i \in B_k} \mathbb{V}ar_{P_i}(\cL_f) \\
& \leq \frac{(40)^2K}{\bar{r}^4(\gamma,\kappa \rho^*)N} \sup\{\mathbb{V}ar_{P_i}(\cL_f):f\in F \cap \cB_{\kappa}, i\in\cI\} \leq 1/24\enspace.
\end{align*}

Using Mc Diarmid's inequality, the Gin{\'e}-Zinn symmetrization argument and the contraction lemma twice and the Lipschitz property of the loss function,  such as in the proof of 	Proposition~\ref{lem:close}, we obtain for all $x>0$, with probability larger than $1-\exp(- |\cK|/288)$, for all $f\in\cF^\prime$,
\begin{equation}\label{eq:concentration_McDirmid_without_ass}
z'(f)\geq 11|\cK|/12 -\frac{160LK}{\theta N} \E \sup_{f\in F \cap \cB_{\kappa}} \frac{1}{\bar{r}^2(\gamma,\kappa \rho^*)}\left|\sum_{i\in\cup_{k\in\cK}B_k} \sigma_i (f-f^*)(X_i)\right|. 
\end{equation} 
From the definition of $\bar{r}^2(\gamma,\kappa \rho^*)$ it follows that $\E \sup_{f\in F \cap \cB_{\kappa}} \left|\sum_{i\in\cup_{k\in\cK}B_k} \sigma_i (f-f^*)(X_i)\right| \leq \gamma \bar{r}^2(\gamma,\kappa \rho^*) $ and $z'(f) \geq |\cK|(11/12 - 160L^2\gamma) = 7 |\cK|/8 $. The rest of the proof is totally similar.

\subsection{Proof of Theorem~\ref{theo:bernstein_kappa}}\label{ProofBernsteinKappa}
	From Assumption~\ref{assum:lip}, it holds $V_K(r) \leq L V_K'(r)$, where for all $r>0$,  
	\[
	V_K'(r) =  \sqrt{K/N}\max_{i\in\cI}\sup_{f \in F: P\cL_f \leq r^2, \; \|f-f^*\| \leq \rho} \|f-f^*\|_{L_2}\enspace.
	\]
By Assumption~\ref{ass:bernstein_kappa},
	\begin{equation*}
	\sqrt{c} V_K\bigg( \sqrt{384000}L \sqrt{\frac{\bar{A} K}{N}}, 2\rho^* \bigg) \leq 	384000L^2 \frac{\bar{A} K}{N}\enspace.
	\end{equation*}
	From the definition of $ r_2^2(\gamma,2\rho^*)$ and Assumption~\ref{ass:bernstein_kappa}, it follows
	\begin{equation*}
	\frac{1}{\gamma} E \bigg( \frac{ r_2(\gamma/ \bar A, 2\rho^*)}{\sqrt{\bar{A}}}  \bigg) \leq \frac{r_2^2(\gamma/ \bar A, 2\rho^*)}{\bar{A}}\enspace.
	\end{equation*}
	Hence, $\bar{r}^2(\gamma, 2\rho^*) \leq \max \big( r_2^2(\gamma/\bar{A}, 2\rho^*)/\sqrt{\bar{A}}, 384000 L^2\bar A K/N \big)$ and the proof is complete.

\end{document}